\tikzset{
	invisible/.style={opacity=0},
	visible on/.style={alt={#1{}{invisible}}},
	alt/.code args={<#1>#2#3}{%
		\alt<#1>{\pgfkeysalso{#2}}{\pgfkeysalso{#3}} 
	},
}
\def\mydefb#1{\expandafter\def\csname b#1\endcsname{\mathbf{#1}}}
\def\mydefallb#1{\ifx#1\mydefallb\else\mydefb#1\expandafter\mydefallb\fi}
\def\mydefb#1{\expandafter\def\csname bb#1\endcsname{\mathbb{#1}}}
\def\mydefallb#1{\ifx#1\mydefallb\else\mydefb#1\expandafter\mydefallb\fi}
\def\mydefb#1{\expandafter\def\csname c#1\endcsname{\mathcal{#1}}}
\def\mydefallb#1{\ifx#1\mydefallb\else\mydefb#1\expandafter\mydefallb\fi}
\def\mydefb#1{\expandafter\def\csname hb#1\endcsname{\hat{\mathbf{#1}}}}
\def\mydefallb#1{\ifx#1\mydefallb\else\mydefb#1\expandafter\mydefallb\fi}
\newcommand{\sD}{\EuScript{D}}
\newcommand{\br}{\mathbf{r}}
\newcommand{\bzero}{\mathbf{0}}
\DeclareMathOperator*{\argmin}{arg\,min}
\DeclareMathOperator{\tr}{tr}
\DeclareMathOperator{\dg}{dg}
\DeclareMathOperator{\supp}{supp}
\DeclareMathOperator{\Cov}{Cov}
\newcommand{\CI}{\mathrel{\text{\scalebox{1.07}{$\perp\mkern-10mu\perp$}}}}
\theoremstyle{plain}
\newtheorem{definition}{Definition}[section]
\newtheorem{lemma}[definition]{Lemma}
\newtheorem{theorem}[definition]{Theorem}
\newtheorem{corollary}[definition]{Corollary}
\newtheorem{remark}[definition]{Remark}
\newtheorem{example}[definition]{Example}
\newtheorem{assumption}{Assumption}
\newcommand\Tstrut{\rule{0pt}{2.6ex}}
\begin{document}
\date{}

\title{\bfseries Continuously Indexed Graphical Models\footnote{Research supported by a Swiss National Science Foundation Research Grant.}}
\author[1]{Kartik G. Waghmare}
\author[2]{Victor M. Panaretos}
\affil[1,2]{Institute of Mathematics, \'{E}cole polytechnique f\'{e}d\'{e}rale de Lausanne, Switzerland}
\affil[1]{\textit{kartik.waghmare@epfl.ch}}
\affil[2]{\textit{victor.panaretos@epfl.ch}}

\maketitle

\begin{abstract}
    Let $X = \{X_{u}\}_{u \in U}$ be a real-valued Gaussian process indexed by a set $U$. It can be thought of as an undirected graphical model with every random variable $X_{u}$ serving as a vertex. We characterize this graph in terms of the covariance of $X$ through its reproducing kernel property. Unlike other characterizations in the literature, our characterization does not restrict the index set $U$ to be finite or countable, and hence can be used to model the intrinsic dependence structure of stochastic processes in continuous time/space. Consequently, this characterization is not in terms of the zero entries of an inverse covariance. This poses novel challenges for the problem of recovery of the dependence structure from a sample of independent realizations of $X$, also known as structure estimation. We propose a methodology that circumvents these issues, by targeting the recovery of the underlying graph up to a finite resolution, which can be arbitrarily fine and is limited only by the available sample size. The recovery is shown to be consistent so long as the graph is sufficiently regular in an appropriate sense. We derive corresponding convergence rates and finite sample guarantees. Our methodology is illustrated by means of a  simulation study and two data analyses.
\end{abstract}

\textbf{AMS Subject Classification:} 62H22, 62R10 (Primary), 62M05, 62M15 (Secondary)

\tableofcontents

\section{Introduction}

We consider the problem of defining undirected graphical models with \emph{uncountable} vertex sets. The purpose of such models is to describe conditional independence relationships inherent in stochastic processes over continuous time/space -- just as ordinary (finite) undirected graphical models do for random vectors in Euclidean spaces. Furthermore, we consider the statistical problem of \emph{recovering} the graph from a finite number of independent realizations of the process, possibly observed discretely and with measurement error,  up to a degree of resolution commensurate with the amount of data available.

Let $X = \{X_{u}\}_{u \in U}$ be a zero-mean Gaussian process on a (possibly uncountably infinite) set $U$. We would like to think of $X$ as a Gaussian graphical model where every random variable $X_{u}$ corresponds to a vertex in a graph $\Omega$ on the index set $U$. The conditional independence structure of $X$ should likewise correspond to the edge structure of $\Omega$, in that, for $u, v \in U$ separated by $W \subset U$ in $\Omega$ we should have
\begin{equation*}
    X_{u} \CI X_{v} ~|~ X_{W},
\end{equation*} 
where $X_{W} = \{X_{w}: w \in W\}$. To this end, we will  characterize the covariance of processes admitting a given graphical structure through the reproducing property of their covariance kernel. And, going in the other direction, we will use this characterization to define the \emph{graph of a process} in terms of its covariance. 

Although the stated characterization is always valid, it is somewhat unwieldy for the purpose of parsing the graph of a given process from its covariance. In the finite vertex set case, we have a particularly handy result when the covariance matrix is invertible, sometimes called the \emph{inverse zero characterization}. This states that the $ij$-th entry of the of the precision (inverse covariance) matrix is zero if and only if there is no edge between the $i$th and $j$th vertices. For uncountably infinite $U$, a direct analogous characterization for covariance kernels is unavailable to us, if indeed it exists at all. In order to derive an analogous result, we develop a notion of \emph{resolution} of a graph. This allows for an alternative inverse zero characterization, yielding a \emph{pixelated} version of the graph of a continuously-indexed process, from the zero entries of a certain \emph{correlation operator matrix} related to its covariance. The choice of resolution can be arbitrary large, and under appropriate conditions yields an exact characterization of the continuum graph in the limit.

This framework subsequently allows us to meaningfully pose the problem of recovering the graph from $n$ independent realizations of the process, with the resolution dictated by the available sample size. Because arbitrarily small changes in the covariance kernel can greatly alter the graph of the associated process, targeting the graph at a sample-dependent finite resolution can also be seen as quantifying how finely the graphical structure can be resolved with a given amount of finite information. In this context, we propose a graph estimator that relies on thresholding (in the operator norm) the entries of the inverse empirical correlation operator matrix. Under standard regularity assumptions on the correlation operator matrix, we show that the underlying graph can be recovered with high probability as the number of samples increases. Also, we give a lower bound on the sample size required to recover the graph at a given familywise error rate.

Although we restrict focus to Gaussian processes, our analysis can be easily extended to sub-Gaussian processes by interpreting the graph in terms of ``conditional uncorrelatedness" instead of conditional independence (\textcite{waghmare2021}). The resulting structure corresponds to a correlational graphoid (\textcite{pearl1985}) and a basic strong separoid (\textcite{dawid2001}), and therefore serves as a reasonable alternative to conditional independence in the absence of Gaussianity. 

The main contributions of this article are the proper definition of the graph of a Gaussian process, its inverse--zero characterization at finite resolution, and the idea of regularization by pixelation. Furthermore, we develop estimation procedures, and associated  performance guarantees, for the problem of graph recovery based on observing sample paths of the process (where observation could be complete, or discrete/noisy, including regular and sparse designs).


\subsection{Background and Related Work}

Undirected graphical models allow us to distinguish direct and indirect associations in data, and thus have a long history in statistics. They have been investigated as models (\textcite{dempster1972}, \textcite{darroch1980}), and as targets of inference (\textcite{lauritzen1996}), with a particular emphasis on high-dimensional settings more recently (\textcite{meinshausen2006}, \textcite{ravikumar2011} and \textcite{rothman2008}). Graphical models with countably infinite index sets have been investigated by \textcite{montague2018} from an axiomatic and probabilistic point of view.

Graphical models with an uncountably infinite number of vertices have not received much attention in the literature, but they are implicit in the study of Markov processes, which can be regarded as infinite graphical models with infinitesimally small graphs. The generalization of the Markov property to Euclidean spaces by \textcite{mckean1963} using the concept of splitting fields, and to locally compact metric spaces by \textcite{rozanov1982}, along with the generalization of the Markov property itself to the quasi-Markov property by \textcite{chay1972}, can be thought of as important steps in this direction.

Graphical models are frequently used to model continuous time (or space) stochastic processes under the label of ``Gaussian Markov random fields (GMRFs)" (\textcite{rue2005}). Although this is usually done for computational benefits, there are important cases in which there is an explicit link between the underlying process and the GMRF used to model it (\textcite{lindgren2011}). Roughly speaking, this amounts to modelling the graphical structure of the process itself.

Elsewhere, directed graphical models have been implicit in auto-regressive, probabilistic and state space approaches for time series analysis, where random variables indexed by discrete time act as vertices (see \textcite{eichler2000}, \textcite{murphy2002}, \textcite{barber2010}). In contrast, we consider graphs of variables indexed by continuous time, which can be viewed as a time series with a continuous time index. 

In the context of functional data, a graphical model can refer to several distinct possibilities. To explain the nuances involved we introduce some notation. Consider an $\bbR^{p}$-valued stochastic process $X$ on an interval $I \subset \bbR$ given by
\begin{equation*}\label{eqn:related}
  t \mapsto  \begin{bmatrix}
        X_{1}(t) \\
        X_{2}(t) \\
        \vdots \\
        X_{p}(t)
 \end{bmatrix}  \in\mathbb{R}^p .
\end{equation*}
Viewing this as a vector-valued function, \textcite{qiao2020} deals with recovering the graphical structure between $\{X_{j}(t): 1 \leq j \leq p\}$ as a function of $t$. This can be thought of as a pointwise finite graphical model: for every $t$, one has a graphical model on $p$ vertices. This perspective is related to \textcite{mogensen2022}, who consider finitely-indexed graphical models on diffusions in $\mathbb{R}^p$. On the other hand, viewing each function globally $t \mapsto X_{j}(t)$ as a random element in a Hilbert space $\mathbb{H}$, one has a single $p$-vector with Hilbertian entries,
 $$\begin{bmatrix}
        X_{1} \\
        X_{2} \\
        \vdots \\
        X_{p}
 \end{bmatrix}  \in\mathbb{H}^p.$$
 In this context, \textcite{qiao2019}, \textcite{lisolea2018} and \textcite{leeli2021} address the problem of recovering the graphical structure between the $p$ vector coordinates $X_{j}$ for $1 \leq j \leq p$. Thus they address the problem of recovering the structure \emph{between} a finite number of related random functions. This can be seen as a global, rather than pointwise approach.
 
In either case, the problem can be seen as recovering the dependence structure \emph{between} a finite collection of $p$ random functions. In contrast, we wish to study the structure \emph{within} a single random function. That is, our interest lies in the graphical structure of the collection $\{X_{j}(t):t \in I\}$ for a given fixed $j$. Thus, we are interested in an \emph{intrinsic} graphical model. Importantly, this means that we are concerned with the problem of recovering the dependence structure between an \emph{uncountably infinite} number of jointly distributed random variables, unlike the above mentioned literature, which deals with a \emph{finite} number of real random variables or Hilbertian random elements. Indeed, we will see that our setting subsumes existing notions of functional graphical models as special cases (Section \ref{subsec:func_graph_models}).

\subsection{Outline of the article}
In Section \ref{sec:bg_notn}, we introduce some notation and review certain basic concepts concerning the theory of graphs,  linear operators, Gaussian processes, and reproducing kernels.  In Section \ref{sec:graph_repn}, we present our characterization of the conditional independence structure of a Gaussian process in terms of its covariance function. Furthermore, we make concrete the notion of \emph{the graph of a process} and derive the graphs of some familiar classes of Gaussian processes explicitly. In Section \ref{sec:partitioning}, we explain in greater detail the concept of resolution. Moreover, we derive an analogue of the finite-dimensional inverse zero characterization (\ref{eqn:inv_zero}), and use it to develop sufficient criteria for the approximate and exact identifiability of the graph of a process. Additionally, we comparatively discuss parallels and differences in our setting/approach and those of relating to multivariate functional graphical models. We describe our graph recovery algorithm in Section \ref{sec:methodology}. In Section \ref{sec:estimation}, we develop asymptotic rates of convergence as well as finite sample bounds. Section \ref{sec:implementation} discusses implementation details, including the choice of tuning parameters. In Section \ref{sec:numerical_simulations}, we present a simulation study covering a variety of covariances at different resolutions and samples sizes. Finally, in Section \ref{sec:data_analysis}, we illustrate our method by applying it to spectroscopy and intraday stock price data.

\section{Preliminaries and Notation}\label{sec:bg_notn}

\subsection{Graphs and Graphical Models}

An \emph{undirected graph} on a set $U$ is defined as a pair $(U, \Omega)$, where $\Omega \subset U \times U$, and such that for any $(u, v) \in U\times U$ we have  $(u, u) \in \Omega$ and $(u, v) \in \Omega \iff (v, u) \in \Omega$. The set $U$ is called the \emph{vertex set} and the set $\Omega$ is called the \emph{edge set}. All graphs in this article are undirected. Since the vertex set will always be fixed, we shall refer to a graph by its edge set $\Omega$.  We shall say $u,v \in U$ are \emph{adjacent} if $(u, v) \in \Omega$, that is, if they have an edge between them. By convention, we assume that every vertex shares an edge with itself. To visualize the graph $\Omega$, we define the \emph{adjacency function} $\mathbf{1}_{\Omega}: U \times U \to \bbR$ as
\begin{equation*}
    \mathbf{1}_{\Omega}(u, v) = \begin{cases}
        1 & (u, v) \in \Omega \\
        0 & (u, v) \notin \Omega
    \end{cases}.
\end{equation*}
This describes the structure of the graph in a way analogous to how the adjacency matrix does so when $U$ is restricted to be finite. A graph is called \emph{complete} if all vertices are adjacent to each other. The unique complete graph on $U$ is given by $U \times U$. For $u,v \in U$, a \emph{path} on $\Omega$ from $u$ to $v$ is a finite sequence $\{w_{k}\}_{k=0}^{n+1}$ of vertices such that $w_{0} = u$, $(w_{k}, w_{k+1})\in \Omega$ (they are adjacent) for $0 \leq k \leq n$, and $w_{n+1} = v$. The vertices $u$ and $v$ are called \emph{connected} if there is a path between them and \emph{disconnected} otherwise. A subset $W$ of $U$ is said to separate $u, v \in U$ if every path between $u$ and $v$ passes through $W$. If $u$ and $v$ are disconnected, then they can be said to be separated by the empty set $\varnothing$.

A graphical model $(X, \Omega)$ consists of a set of random variables $X = \{X_{u}: u \in U\}$ indexed by a set $U$, and a graph $\Omega\subset U\times U$, such that for every $u, v \in U$ separated by $W \subset U$ in $\Omega$, the process $X$ satisfies
\begin{equation}\label{eqn:sepn_condn}
    X_{u} \CI X_{v} ~|~ X_{W}.
\end{equation}
Here, $X_W:=\{X_w: w\in W\}$ represents the restriction of $X$ to $W\subset U$. In other words, $(X, \Omega)$ is a graphical model if $X$ satisfies the \emph{global Markov property} (\ref{eqn:sepn_condn}) with respect to $\Omega$. It is implicit in the definition that if $u$ and $v$ are disconnected, then $X_{u}$ and $X_{v}$ are independent. The global Markov property relates $X$ and $\Omega$ by making the conditional independence structure of $X$ conform with the edge structure of the graph $\Omega$. Note that, for notational convenience, we have defined our graphical models slightly differently than the standard nomenclature: the vertex set of our graph is the domain $U$ instead the set of random variables $\{X_{u} : u \in U\}$. 

\subsection{Linear Operators on Hilbert Spaces}

To develop our theory and methods, we will work with linear operators acting on Hilbert spaces. Recall that a Hilbert space is a complete inner product space. Let $U$ be a compact subset of the Euclidean space $\bbR^{d}$ equipped with a finite Borel measure $\mu$ -- for example, $U = [0, 1] \subset \bbR$  equipped with the Lebesgue measure. In addition to the reproducing kernel Hilbert spaces, which are introduced later, the Hilbert space we are mostly concerned with in this article is the space of square-integrable functions, $L^{2}(U, \mu) = \{f: U \to \bbR \mbox{ such that } \int_{U}|f(u)|^{2} ~d\mu(u) < \infty\}$ equipped with the inner product and norm given by
\begin{equation*}
    \langle f, g \rangle_{L^{2}(U, \mu)} = \int_{U} f(u) g(u) ~d\mu(u) \quad\mbox{ and }\quad \|f\|_{L^{2}(U, \mu)}^{2} =  \int_{U} |f(u)|^{2} ~d\mu(u),
\end{equation*}
respectively.

Linear transformations between Hilbert spaces are referred to as \emph{linear operators}, and their behavior is similar to that of linear transformations between Euclidean spaces, although there are important differences. We shall use boldface capital letters like $\bA$ to denote them. Let $\cH_{1}$ and $\cH_{2}$ be Hilbert spaces and $\bA: \cH_{1} \to \cH_{2}$ be a linear operator. The operator norm $\|\bA\|$ of an operator $\bA$ is then defined as 
\begin{equation*}
    \|\bA\| = \sup \{ \|\bA f\|_{\cH_{2}} : \|f\|_{\cH_{1}} \leq 1\},
\end{equation*}
which is similar to how one defines the spectral norm for matrices. We say that $\bA$ is \emph{bounded} if $\|\bA\| < \infty$. The \emph{adjoint} of a bounded linear operator $\bA$ is the unique linear operator $\bA^{\ast}: \cH_{2} \to \cH_{1}$ satisfying $\langle \bA f, g \rangle_{\cH_{2}} = \langle f, \bA^{\ast} g \rangle_{\cH_{1}}$ for every $f \in \cH_{1}$ and $g \in \cH_{2}$.

If $\cH_{1}$ and $\cH_{2}$ are the same Hilbert space, denoted as $\cH$, then $\bA$ is called \emph{self-adjoint} if $\bA = \bA^{\ast}$. An operator $\bA: \cH \to \cH$ is said to be \emph{positive} if $\langle f, \bA f \rangle_{\cH} \geq 0$ for every $f \in \cH$, and \emph{ strictly positive} if $\langle f, \bA f \rangle_{\cH} > 0$ unless $f = 0$. Furthermore, every bounded positive operator $\bA: \cH \to \cH$ admits a \emph{square root}, that is, a bounded positive operator $\bB: \cH \to \cH$ satisfying $\bB^{2} = \bA$. The square root of $\bA$ is denoted as $\bA^{1/2}$. The \emph{identity} operator $\bI: \cH \to \cH$ is given by $\bI f = f$. The \emph{inverse} $\bA^{-1}: \cH \to \cH$ of $\bA$ is defined as the linear operator which maps every $g \in \cH$ to the unique $f \in \cH$ such that $\bA f = g$. $\bA$ is called \emph{invertible} if its inverse $\bA^{-1}$ exists. Of course, the inverse does not always exist because $\bA$ need not be bijective. Therefore, we define the \emph{pseudoinverse} $\bA^{-}$ of $\bA$ as the linear operator which maps every $g \in \cH$ to the unique $f \in \cH$ such that among all $h \in \cH$ which minimize $\|\bA h - g\|_{\cH}$, $f$ has the minimum norm (see \textcite{beutler1965}). Unsurprisingly, if $\bA$ is bijective, then $\bA^{-} = \bA^{-1}$. An \emph{eigenvalue} of $\bA$ is defined as any $\lambda \in \bbR$ satisfying $\bA f = \lambda f$ for some nonzero $f \in \cH$, which is then called the \emph{eigenvector} (or \emph{eigenfunction}) corresponding to $\lambda$. A related concept is the \emph{spectrum} $\sigma(\bA)$ of $\bA$ which is the set of $\lambda \in \bbR$ for which $\bA - \lambda \bI$ is not invertible. Note that while every eigenvalue of $\bA$ is included in its spectrum $\sigma(\bA)$, not every value in the spectrum qualifies as an eigenvalue.

Let $V$ be another compact subset of a Euclidean space equipped with a finite Borel measure $\nu$. For $A \in L^{\infty}(U \times V, \mu \otimes \nu)$, we define the \emph{integral operator} $\bA: L^{2}(V, \nu) \to L^{2}(U, \mu)$ corresponding to the \emph{integral kernel} $A$ as $\bA f = \int_{U} A(u, v) f(v) ~d\nu(v)$. This is analogous to how every linear transformation between Euclidean spaces can be represented by a matrix. However, not every bounded operator is an integral operator corresponding to some integral kernel. For example, the identity $\bI$ on $L^2(U,\mu)$ is not an integral operator. 

Recall that every matrix can be represented as a linear combination of finite-rank matrices, and the analogous statement applies to linear transformations between Euclidean spaces. In contrast, not every bounded operator can be represented or even approximated (in the operator norm) by linear combinations of finite-rank operators. The operators which do have this property are called \emph{compact} operators. Every compact self-adjoint operator admits an orthonormal basis, which is at most countably infinite, consisting of eigenvectors $\{f_{j}\}_{j=1}^{N}$ with the corresponding eigenvalues $\{\lambda_{j}\}_{j=1}^{N}$ satisfying $\lambda_{j} \to 0$, where $N$ can be infinite.  

\color{purple}

\color{black}

For $1 \leq j \leq p$, let $\cH_{j}$ be Hilbert spaces equipped with the inner products $\langle\cdot, \cdot\rangle_{j}$. The \emph{Cartesian product} of the spaces $\cH_{j}$ is the Hilbert space
$\cH_{1} \times \cdots \times \cH_{p} = \{ (f_{1}, \dots, f_{p}) : f_{j} \in \cH_{j} \}$
equipped with the inner product
\begin{equation*}
    \langle (f_{1}, \dots, f_{p}), (g_{1}, \dots, g_{p})\rangle = \sum_{j=1}^{p} \langle f_{j}, g_{j} \rangle_{\cH_{j}} \mbox{ for } (f_{1}, \dots, f_{p}), (g_{1}, \dots, g_{p}) \in \cH_{1} \times \cdots \times \cH_{p}.
\end{equation*}
An operator from a product Hilbert space $\mathcal{H}_1\times\hdots\times\mathcal{H}_p$ onto itself can be represented as a $p\times p$ matrix whose $ij$th entry is an operator from $\mathcal{H}_i$ to $\mathcal{H}_j$. Such operators will be called \emph{operator matrices}. Note that when $\mathcal{H}_j = \mathbb{R}^{m_j}$ with $\{m_j\}_{i=1}^{p}$ being finite, then an operator matrix is simply a partitioned matrix. Operator matrices being operators themselves, we will use the same boldface notation to denote them -- in his sense, an operator is a $1\times 1$ operator matrix. Notice that when 
For an operator matrix $\bA = [\bA_{ij}]_{i,j=1}^{p}$, we shall use $\dg \bA$ to denote the diagonal part $[\delta_{ij}\bA_{ij}]_{i,j=1}^{p}$, where $\delta_{ij}$ is the Kronecker delta, and $\bA_{0}=\bA - \dg \bA$ to denote the off-diagonal part.

For a more thorough discussion of these concepts, the reader is advised to consult \textcite{hsing2015} or  \textcite{simon2015}.

\subsection{Gaussian Processes and Random Elements}

Recall that two Gaussian random variables, say $Y_{1}$ and $Y_{2}$, are independent if and only if they are uncorrelated, which means $\mathrm{Cov}(Y_{1}, Y_{2}) = 0$. Similarly, for Gaussian random variables $Y_{1}$, $Y_{2}$ and $Y_{3}$, it holds that $Y_{1}$ and $Y_{2}$ are conditionally independent given $Y_{3}$ if and only if the conditional covariance $\mathrm{Cov}(Y_{1}, Y_{2} | Y_{3}) = \bbE[(Y_{1}-\bbE[Y_{1}|Y_{3}])(Y_{2}-\bbE[Y_{2}|Y_{3}])|Y_{3}] = 0$ almost surely. 

The concept of a Gaussian process extends the notion of a Gaussian vector to accommodate a potentially infinite index set $U$. A stochastic process $X = \{X_{u}: u \in U\}$ is said to be Gaussian if for every $n \geq 1$, $\{\alpha_{i}\}_{i=1}^{n} \subset \bbR$ and $\{x_{i}\}_{i=1}^{n} \subset U$, the linear combination $\sum_{i=1}^{n}\alpha_{i}X_{u_{i}}$ is a Gaussian random variable. Note that if $U$ is finite, this is identical to saying that $X$ is a Gaussian random vector. Consider the set $\cL_{0}$ of linear combinations of $\{X_{u}: u\in U\}$. The closure of $\cL_{0}$ under the inner product $\langle Y_{1}, Y_{2} \rangle_{\cL(X)} = \bbE[Y_{1}Y_{2}]$ forms a Hilbert space $\cL(X)$. The probabilistic structure of $X$ is intimately related to the geometric structure of $\cL(X)$. Assume that $\bbE[X_{u}] = 0$ for every $u \in U$, and let $Y \in \cL(X)$ and $L \subset \cL(X)$. Then the conditional expectation $\bbE[Y|L]$ is equal to the projection in $\cL(X)$ of $Y$ to the closed linear subspace spanned by $L$ in $\cL(X)$ (see \textcite{loeve2017}).

The concept of a random variable can be extended to include variables that assume values in spaces other than the real line, giving rise to the idea of random elements. A random element in a Hilbert space thus corresponds to a Borel probability measure in a Hilbert space, analogously to how a random variable corresponds to a Borel probability measure on the real line. Let $\tilde{X}$ be a random element valued in  $L^{2}(U, \mu)$. If $\bbE[\|\tilde{X}\|_{L^{2}(U, \mu)}^{2}] < \infty$, we can define the mean $\tilde{m} = \bbE[\tilde{X}] \in L^{2}(U, \mu)$ and the covariance operator $\tilde{\bK}: L^{2}(U, \mu) \to L^{2}(U, \mu)$ of $\tilde{X}$ as
\begin{equation*}
    \langle g, \tilde{\bK} f \rangle_{L^{2}(U, \mu)} = \bbE[\langle f, \tilde{X} \rangle_{L^{2}(U, \mu)}\langle g, \tilde{X} \rangle_{L^{2}(U, \mu)}] \qquad \mbox{ for } g,f \in L^{2}(U, \mu).
\end{equation*}
A random element $\tilde{X}$ in the Hilbert space $L^{2}(U, \mu)$ is said to be Gaussian if for every $f \in L^{2}(U, \mu)$, the inner product $\langle f, \tilde{X} \rangle_{L^{2}(U, \mu)}$ is a Gaussian random variable. 

In general, for a given stochastic process $X = \{X_{u}: u \in U\}$ with mean $m(u) = \bbE[X_{u}]$ and covariance $K(u, v) = \bbE[(X_{u} - \bbE[X_{u}])(X_{v} - \bbE[X_{v}])]$, there does not necessarily exist a random element $\tilde{X}$ on $L^{2}(U, \mu)$ such that $m = \tilde{m} \in L^{2}(U, \mu)$ and $\tilde{\bK}$ is the integral operator on $L^{2}(U, \mu)$ corresponding to $K$. However, this condition is satisfied if $U$ is a compact metric space, and both $m$ and $K$ are continuous (see \textcite[Theorem 7.4.4]{hsing2015}).

A random variable $Y$ is said to be sub-Gaussian if its sub-Gaussian norm, given by
\begin{equation*}
    \|Y\|_{\psi_{2}} := \inf \{ t > 0: \bbE \exp (X^{2}/ t^{2}) \leq 2 \},
\end{equation*}
is finite. A random element $\tilde{X}$ in $L^{2}(U, \mu)$ is said to be sub-Gaussian if there exists $C > 0$ such that  we have
\begin{equation*}
    \| \langle f, \tilde{X}\rangle_{L^{2}(U, \mu)} \|_{\psi_{2}} \leq C \bbE\Big[|\langle f, \tilde{X}\rangle_{L^{2}(U, \mu)}|^{2}\Big]^{1/2} \qquad \mbox{ for every } f \in L^{2}(U, \mu).
\end{equation*}

An excellent source treating stochastic processes and random elements in a statistical framework is \textcite{hsing2015}. For additional discussion on the properties of sub-Gaussian random variables or elements, the reader is encouraged to consult \textcite{vershynin2018} and \textcite{koltchinskii2017}, respectively.

\subsection{Covariances and Reproducing Kernels}

Let $X = \{X_{u}: u \in U\}$ be a stochastic process on a set $U$ which is \emph{second-order}, meaning $\bbE[|X_{u}|^{2}] < \infty$ for every $u \in U$. We define its covariance kernel $K: U \times U \to \bbR$ as
\begin{equation*}
    K(u, v) = \bbE[(X_{u} - \bbE[X_{u}])(X_{v} - \bbE[X_{v}])] \mbox{ for } u,v \in U.
\end{equation*}
Note that it is \emph{symmetric}, $K(x, y) = K(y, x)$ for every $x, y \in U$, and \emph{positive-definite},
\begin{equation*}
    \textstyle  \sum_{i,j=1}^{n}\alpha_{i}\alpha_{j}K(x_{i}, x_{j}) \geq 0\quad \mbox{ for every } n\geq 1,  \{x_{i}\}_{i=1}^{n} \subset U, \mbox{ and } \{\alpha_{i}\}_{i=1}^{n} \subset \bbR.
\end{equation*}
Symmetric positive-definite kernels are also known as \emph{reproducing kernels}. They possess a very rich analytical structure, which will be utilized extensively in our development. 

We write $K(u, \cdot)$ and $K(\cdot, u)$ to denote the function $f:W \to \bbR$ given by $f(v) = K(u, v)$ for $v \in W$, where $W = U$ unless indicated otherwise. Consider the linear span $\cH_{0}$ of $\{K(u, \cdot): u \in U\}$, which can be equipped with the inner product $\langle K(u, \cdot), K(\cdot, v) \rangle_{\cH(K)} = K(u, v)$ for $u, v \in U$. The closure $\cH(K)$ of $\cH_{0}$ under the inner product $\langle \cdot, \cdot \rangle_{\cH(K)}$ is, in fact, a Hilbert space. Consequently, $\cH(K)$ is referred to as the reproducing kernel Hilbert space (RKHS) associated with $K$. We denote the inner product of $f, g \in \cH(K)$ as $\langle f, g \rangle_{\cH(K)}$. 

If $U$ is finite, say $U = \{1, \dots, p\}$, the space $\cH(K)$ can be thought of as consisting Euclidean vectors which are linear combinations of the columns of the matrix $\mathsf{K} = [K(i, j)]_{i,j=1}^{p}$. Thus $\cH(K)$ is the range of $\mathsf{K}$. For $\mathbf{x}, \mathbf{y} \in \cH(K)$, the RKHS inner product can be evaluated as the Mahalanobis inner product, that is,
\begin{equation*}
    \langle \mathsf{x}, \mathsf{y} \rangle_{\cH(K)} = \mathsf{x}^{\top} \mathsf{K}^{-} \mathsf{y},
\end{equation*}
where $\mathsf{K}^{-}$ is the pseudoinverse of $\mathsf{K}$ (see \textcite[Section 2.3.3]{paulsen2016}). 

More generally, if $U$ is a compact subset of a Euclidean space equipped with a finite Borel measure $\mu$ supported on $U$, the space $\cH(K)$ is the range of the square root $\bK^{1/2}$ of the integral operator $\bK$ corresponding to the integral kernel $K$ and the RKHS inner product can be calculated as
\begin{equation*}
    \langle f,g\rangle_{\mathcal{H}(K)} = \langle \bK^{-1/2}f,\bK^{-1/2}g\rangle_{L^2(U)},
\end{equation*}
where $\bK^{-1/2}$ is the pseudoinverse of $\bK^{1/2}$ (see \textcite[Theorem 11.18]{paulsen2016}). This is a striking result in light of the fact that the product $\langle \bK^{-1/2}f,\bK^{-1/2}g\rangle_{L^2(U)}$ does not depend on the choice of the measure $\mu$, since $\langle f,g\rangle_{\mathcal{H}(K)}$ can be defined without any reference to a measure.

If $X$ is a Gaussian process, then the Hilbert space $\cL(X)$ generated by the process is isometrically isomorphic to $\cH(K)$. The isometry maps $X_{u} \in \cL(X)$ to $K(u, \cdot) \in \cH(K)$ for every $u \in U$. This result is known as \emph{Lo\`{e}ve's theorem} or \emph{Lo\`{e}ve isometry} (see \textcite[Theorem 35]{berlinet2011}) and it permits us to rewrite statements concerning the Gaussian process $X$ in terms of its RKHS.

We denote the restriction of $f \in \cH(K)$ to $W \subset U$ as $f|_{W}$ and the restriction of $K$ to $V \times W$ for or $V, W \subset U$ as $K|_{V \times W}$. For a given $W \subset U$, every restriction $f|_{W}$ of $f \in \cH(K)$ is, in fact, a member of the RKHS $\cH(K_{W})$ of the restriction $K_{W} = K|_{W \times W}$. Let $\Pi_{W}$ denote the projection in $\cH(K)$ to the closed linear subspace spanned by $\{K(u, \cdot): u \in W\}$. Then the RKHS $\cH(K_{W})$ is isomorphic to a subspace of $\cH(K)$ given by $\{\Pi_{W}K(u, \cdot): u \in U\}$ and the isometry maps $K(u, \cdot)|_{W}$ to $\Pi_{W}K(u, \cdot)$ for $u \in W$. This result is called \emph{subspace isometry} (see appendix of \textcite{waghmare2021}). 

An excellent introduction to the theory of reproducing kernels can be found in \cite{berlinet2011}, \textcite{paulsen2016} and \textcite{aronszajn1950}.

\section{Graphical Representation of Gaussian Processes}\label{sec:graph_repn}

We begin by characterizing the relationship between the conditional independence structure of a Gaussian process $X$ and its covariance kernel $K$. We then use this characterization to define the graph of a Gaussian process and to discuss certain conceptual differences with respect to the finite index setting.

\subsection{The Separation Equation}

Let $X = \{X_{u} : u \in U\}$ be a Gaussian process on a set $U$ satisfying the global Markov property (\ref{eqn:sepn_condn}) for some graph $\Omega \subset U \times U$. Because $X$ is Gaussian, this is equivalent to requiring that for every $u, v \in U$ separated by $W \subset U$ (see Figure \ref{fig:omega_and_pi1} (a)), the conditional covariance  
\begin{equation*}
    \Cov(X_{u}, X_{v}|X_{W}) = \bbE[X_{u}X_{v}| X_{W}] - \bbE[X_{u}|X_{W}]\cdot\bbE[X_{v}|X_{W}] 
\end{equation*}
must vanish almost surely. Taking the expectation and using the law of iterated expectation, this implies that
\begin{equation}\label{eqn:sepn_expctn}
    \bbE[X_{u}X_{v}] = \bbE\Big[\bbE[X_{u}|X_{W}]\cdot\bbE[X_{v}|X_{W}]\Big] 
\end{equation}
almost surely. We shall now express this statement in terms of the kernel $K$.

\begin{figure}
    \begin{subfigure}{0.49\textwidth}
    \centering
    \begin{tikzpicture}
        \draw (0, 0) -- (5, 0) -- (5, 5) -- (0, 5) -- cycle;
        \draw [fill=red!15] 
        (0,0) -- 
        (2,0) .. controls (2.8,2.6) and (4,0.6) .. (5,3) --
        (5,5) -- 
        (3,5) .. controls (0.6,4) and (2.6,2.8) .. (0,2) -- cycle;
        
        \def \x {1.2}
        \def \h {2.4}
        \def \b {0.8}
        \draw [dashed] (\x, \x) -- (\x + \h, \x) -- (\x + \h, \x + \h) -- (\x, \x + \h) -- cycle;
        \draw (\x + \h/2, \x + \h/2) node[fill=red!15] {$K_{W}$};

        \draw[|-|] (\x + \h + \b, \x) -- (\x + \h + \b, \x + \h) node [midway, fill = white, sloped, above, text opacity=1, opacity = 0, near end] {$K(u, \cdot)$};
        \draw[|-|] (\x, \x - 0.9*\b) -- (\x + \h, \x - 0.9*\b) node [midway,fill = white, above, text opacity=1, opacity = 0, near end] {$K(\cdot, v)$};

        \draw [fill] (\x + \h + \b, \x - 0.9*\b) circle [radius=0.05];
        \draw (\x + \h + \b, \x - 0.9*\b) node[below] {\scriptsize $(u, v)$};
        
        \draw (0.6, 0.6) node [fill=red!15] {$\Omega$};
    \end{tikzpicture}
    \caption{}
    \end{subfigure}
    \begin{subfigure}{0.49\textwidth}
    \centering
    \begin{tikzpicture}
        \draw (0, 0) -- (5, 0) -- (5, 5) -- (0, 5) -- cycle;
        \def \h {1}
        
        \draw [dashed, fill=green!15] (0,0) -- (\h, 0) -- (5, 5 - \h) node [above, sloped, near end] {$\Omega + \bbB_{\epsilon}$} -- (5,5) -- (5 - \h, 5) -- (0, \h) -- cycle;

        \draw (0, 0) -- (5, 5) node [fill=green!15, draw, sloped, near start] {$\Omega_{X}$};
    \end{tikzpicture}
    \caption{}
    \end{subfigure}\\[0.3cm]
    \caption{(a) An example of $u, v \in U$ separated by $W \subset U$ indicated by $W \times W$ (dashed square) along with the restrictions $K(u, \cdot)|_{W}$ and $K(\cdot, v)|_{W}$, and (b) The graph $\Omega_{X}$ of Brownian motion (the diagonal) and the $\epsilon$-envelope $\Omega_{X} + \bbB_{\epsilon}$ (in green).}
    \label{fig:omega_and_pi1}
\end{figure}
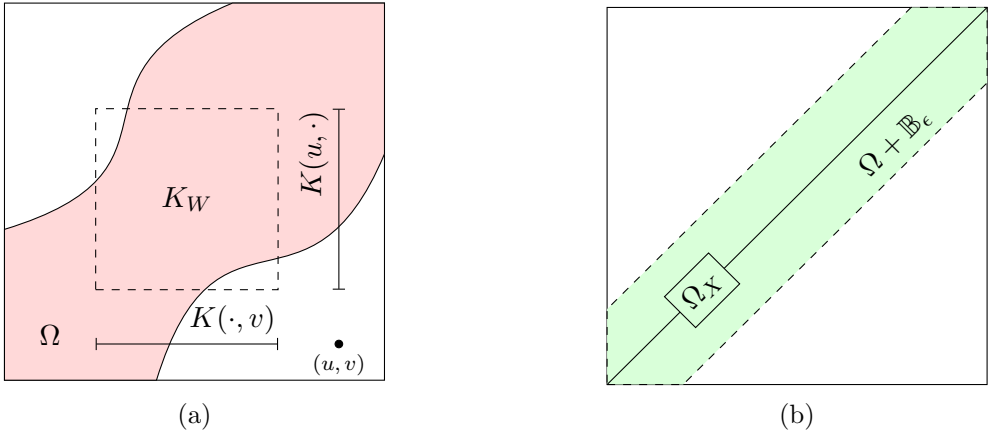

Recall that the closed linear span $\cL(X)$ of $X = \{X_{u}: u\in U\}$ under the norm $\|Y\|_{\cL(X)} = \bbE[Y^{2}]$ forms a Hilbert space under the inner product $\langle Y_{1}, Y_{2} \rangle_{\cL(X)} = \bbE[Y_{1}Y_{2}]$ induced by the norm. Because $X$ is Gaussian, the conditional expectation $\bbE[X_{u}|X_{W}]$ is equal to the projection of $X_{u}$ to the closed linear span of $X_{W}$ in $\cL(X)$ (see \textcite{loeve2017}). Furthermore, according to a result known as  Lo\`{e}ve isometry, the space $\cL(X)$ is isometrically isomorphic to the reproducing kernel Hilbert space $\cH(K)$ with the isometry given by $X_{u} \mapsto K(u, \cdot)$ (see \textcite[Theorem 35]{berlinet2011}). Together, these results allow us to rewrite Equation (\ref{eqn:sepn_expctn}) as 
\begin{equation}\label{eqn:sepn_rkhs}
    \langle K(u, \cdot), K(\cdot, v) \rangle_{\cH(K)} = \langle \Pi_{W} K(u, \cdot), \Pi_{W} K(\cdot, v) \rangle_{\cH(K)}
\end{equation}
where $\Pi_{W}$ denotes the projection in $\cH(K)$ to the closed linear subspace generated by $\{K(w, \cdot) : w \in W\}$. As before, we shall consider it implicit that if $u$ and $v$ are disconnected then they are separated by $W = \varnothing$ and $K(u, v) = 0$. 

By the reproducing property, we can write the left hand side of (\ref{eqn:sepn_rkhs}) as $\langle K(u, \cdot), K(\cdot, v) \rangle_{\cH(K)} = K(u, v)$. The inner product of the projections on the right hand side, $\langle \Pi_{W} K(u, \cdot), \Pi_{W} K(\cdot, v) \rangle_{\cH(K)}$ can be evaluated by taking the inner product of the restrictions $K(u, \cdot)|_{W}$ and $K(\cdot, v)|_{W}$ in the reproducing kernel Hilbert space of the restriction $K_{W} = K|_{W \times W}$ of the kernel $K$ by the subspace isometry (see \textcite{paulsen2016}). Thus, $K(u, v) = \langle K(u, \cdot)|_{W}, K(\cdot, v)|_{W} \rangle_{\cH(K_{W})}$. For the sake of brevity, we write this as
\begin{equation}\label{eqn:sepn_eqn}
    K(u, v) = \langle K(u, \cdot), K(\cdot, v) \rangle_{\cH(K_{W})},
\end{equation}
since the domain of $K(u, \cdot)$ and $K(\cdot, v)$ is understood to be $W$. We shall refer to (\ref{eqn:sepn_eqn}) as the \emph{separation equation}. Going in the opposite direction, notice that (\ref{eqn:sepn_rkhs}) implies
\begin{equation*}
    \langle K(u, \cdot) - \Pi_{W}K(u, \cdot), K(\cdot, v) - \Pi_{W}K(\cdot, v) \rangle_{\cH(K)} = 0.
\end{equation*}
By of Gaussianity and Lo\`{e}ve isometry, this means that $X_{u} - \bbE[X_{u} | X_{W}]$ and $X_{v} - \bbE[X_{v} | X_{W}]$ are independent. Additionally, they are both independent of $X_{W}$. It follows that 
\begin{equation*}
    \Cov(X_{u}, X_{v}|X_{W}) = \bbE\Big[(X_{u} - \bbE[X_{u}|X_{W}])(X_{v} - \bbE[X_{v}|X_{W}])| X_{W} \Big] = 0.
\end{equation*}
To summarize, we have established the following theorem.
\begin{theorem}\label{thm:condnl_ind}
    Given a Gaussian process $X = \{X_{u} : u \in U\}$ and a graph $\Omega \subset U\times U$, the following two statements are equivalent: 
    \begin{itemize}
\item[(A)]   For every $u, v \in U$ separated by $W \subset U$ in $\Omega$  
$$
 X_{u} \CI X_{v} ~|~ X_{W}.
 $$
\item[(B)] For every $u, v \in U$ separated by $W \subset U$ in $\Omega$
    \begin{equation*}
        K(u, v) = \langle K(u, \cdot), K(\cdot, v) \rangle_{\cH(K_{W})}.
    \end{equation*}
\end{itemize}
\end{theorem}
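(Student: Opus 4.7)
My plan is to prove the equivalence by working through the chain of identifications available via Gaussianity, Lo\`{e}ve isometry, and subspace isometry, in both directions. The argument is essentially reversible, so the main work is setting up the correspondences carefully.

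\textbf{Direction (A) $\Rightarrow$ (B).} Fix $u, v \in U$ separated by $W \subset U$. Gaussianity of $X$ reduces conditional independence to vanishing conditional covariance: $X_u \CI X_v \mid X_W$ is equivalent to $\Cov(X_u, X_v \mid X_W) = 0$ almost surely. Taking expectations and using the tower property converts this to
\begin{equation*}
  \bbE[X_u X_v] \;=\; \bbE\bigl[\bbE[X_u \mid X_W]\,\bbE[X_v \mid X_W]\bigr].
\end{equation*}
Now I would translate each side into $\cH(K)$. Because $X$ is Gaussian with mean zero, the conditional expectation $\bbE[X_u \mid X_W]$ is the $\cL(X)$-projection of $X_u$ onto the closed linear span of $\{X_w : w \in W\}$, and Lo\`{e}ve isometry maps this projection to $\Pi_W K(u,\cdot)$ in $\cH(K)$. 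Hence the equation above becomes
\begin{equation*}
  \langle K(u,\cdot), K(\cdot,v)\rangle_{\cH(K)} \;=\; \langle \Pi_W K(u,\cdot), \Pi_W K(\cdot,v)\rangle_{\cH(K)}.
\end{equation*}
The left side equals $K(u,v)$ by the reproducing property. For the right side, I would invoke subspace isometry: the operator $\Pi_W$ in $\cH(K)$ corresponds to restriction to $W$ in $\cH(K_W)$, so the right side equals $\langle K(u,\cdot), K(\cdot,v)\rangle_{\cH(K_W)}$. This yields (B). The disconnected case $W = \varnothing$ is handled by the convention that $\cH(K_\varnothing) = \{0\}$, reducing the separation equation to $K(u,v) = 0$, which corresponds to independence.

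\textbf{Direction (B) $\Rightarrow$ (A).} I would simply reverse the chain. Starting from the separation equation, subspace isometry and the reproducing property let me rewrite it as
\begin{equation*}
  \langle K(u,\cdot) - \Pi_W K(u,\cdot),\; K(\cdot,v) - \Pi_W K(\cdot,v)\rangle_{\cH(K)} = 0.
\end{equation*}
Pulling this back through Lo\`{e}ve isometry gives $\bbE\bigl[(X_u - \bbE[X_u \mid X_W])(X_v - \bbE[X_v \mid X_W])\bigr] = 0$, so the two residuals are uncorrelated. The residuals are also each orthogonal in $\cL(X)$ to every $X_w$ with $w \in W$, by definition of the projection. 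Since $(X_u - \bbE[X_u \mid X_W],\; X_v - \bbE[X_v \mid X_W],\; X_W)$ is jointly Gaussian, pairwise uncorrelatedness of the two residuals together with their joint uncorrelatedness with $X_W$ upgrades to mutual independence. Consequently
\begin{equation*}
  \Cov(X_u, X_v \mid X_W) = \bbE\bigl[(X_u - \bbE[X_u \mid X_W])(X_v - \bbE[X_v \mid X_W]) \,\big|\, X_W\bigr] = 0 \quad \text{a.s.},
\end{equation*}
and Gaussianity again turns vanishing conditional covariance into $X_u \CI X_v \mid X_W$.

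\textbf{Where care is needed.} The heavy lifting is not computational but conceptual: everything hinges on correctly identifying $\bbE[\,\cdot \mid X_W]$ with $\Pi_W$ under the two isometries. The one subtlety I would flag is the converse direction, where I must invoke Gaussianity twice: once to promote orthogonality of the residuals to independence of the residuals, and once more to promote vanishing conditional covariance to conditional independence. Joint Gaussianity of the residuals with $X_W$ is what makes the former step legitimate, so I would make sure to note that the residuals live in $\cL(X)$ and hence are Gaussian together with $X_W$.
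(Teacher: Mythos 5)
Your proof is correct and follows essentially the same route as the paper's own argument: Gaussianity reduces conditional independence to vanishing conditional covariance, the tower property and the identification of conditional expectation with projection in $\cL(X)$ translate this via Lo\`{e}ve isometry into an orthogonality statement in $\cH(K)$, and the reproducing property plus subspace isometry yield the separation equation, with the converse obtained by reversing the chain and using joint Gaussianity of the residuals with $X_W$. The subtleties you flag (the two separate invocations of Gaussianity in the converse) are exactly the points the paper relies on as well.
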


The chief virtue of Theorem \ref{thm:condnl_ind} is that it expresses the conditional independence structure of a Gaussian process in terms of a relatively simple notion, namely inner products in a Hilbert space. Inner products are almost as easy to work with in infinite dimensions as they are in finite dimensions. This is unlike densities, which are often used to deal with finitely-indexed graphical models but do not generalize for want of a suitable analogue of the Lebesgue measure in infinite dimensions. For the same reason, it is unclear how the celebrated Hammersley-Clifford theorem may be formulated for graphical models with infinite index sets.

It is important to note that, in general, the global Markov property (\ref{eqn:sepn_condn}) used to derive (\ref{eqn:sepn_eqn}) is \emph{not} equivalent to the local or pairwise Markov properties when the index set is uncountable. Indeed, for continuous covariances, the counterpart of (\ref{eqn:sepn_eqn}) corresponding to the local or pairwise Markov property is vacuously true for every graph. This trivializes the whole notion. It turns out that the equivalence does hold for \emph{countably} infinite index sets under additional conditions (see \textcite{montague2018}).\\

One of the properties which force a Gaussian process to obey the separation equation with respect to a graph is the analyticity of the covariance kernel, as illustrated by the following example. 
\begin{example}\label{ex:analytic}
    Let $X = \{X_{t}\}_{t \in I}$ be a Gaussian process on the unit interval $I$ with an analytic covariance $K$. Then $K$ satisfies the separation equation for every $\Omega$ which contains the strip $\{(u, v): |u - v| \leq w\}$ for some $w > 0$. Indeed, for any two points $u, v \in I$ separated by $W \subset I$, $W$ must contain an interval of finite length. This implies that the function $f(\cdot) = K(u, \cdot) - \Pi_{W}K(u, \cdot)$ is zero on $W$ because $f(w) = \langle K(u, \cdot) - \Pi_{W}K(u, \cdot), K(\cdot, w) \rangle  = 0$ for $w \in W$ by the projection theorem. Because $W$ contains an interval of finite length and $f$ is analytic (since $f \in \cH(K)$ where $K$ is analytic, see \textcite{saitoh2016}), we have $$f(\cdot) = K(u, \cdot) - \Pi_{W}K(u, \cdot) \equiv 0.$$ Similarly, we can show that $K(\cdot, v) - \Pi_{W}K( \cdot, v) \equiv 0$ and as a result, $$ K(u, v) = \langle K(u, \cdot), K(\cdot, v) \rangle = \langle \Pi_{W}K(u, \cdot), \Pi_{W}K(\cdot, v) \rangle = \langle K(u, \cdot), K(\cdot, v) \rangle_{\cH(K_{W})}.$$ The conclusion follows. This argument can be easily extended to any Gaussian process on a connected domain in a Euclidean space with an analytic covariances.
\end{example}

It is natural to ask why the relationship between the conditional independence structure of $X$ and its covariance $K$ has to be expressed by such tortuous means. After all, if $\bX = \{X_{j}\}_{j = 1}^{p}$ is a Gaussian random vector with a non-singular covariance matrix $\bC$, satisfying the global Markov property (\ref{eqn:sepn_condn}) for some graph $\Omega \subset \{1, \dots, p\}^{2}$, then the relation between $\Omega$ and $\bC$ is described very elegantly by the following well-known result:
\begin{equation}\label{eqn:inv_zero}
    \bP_{ij} = 0 \mbox{ if and only if } i \mbox{ and } j \mbox{ are not adjacent in } \Omega,
\end{equation}
where $\bP=\bC^{-1}$ is the precision matrix. In other words, the zero entries of the matrix $\bP$ correspond precisely to missing edges of the graph $\Omega$.

Having an elegant \emph{inverse zero characterization} similar to (\ref{eqn:inv_zero}) for kernels is impeded by technical difficulties, however. Namely, the ``inverse'' of a kernel on an uncountable domain $U\times U$ is not a well-defined notion in general. If we attempt to make the space of kernels into a ring by defining the product of two kernels $K_{1}$ and $K_{2}$ in a natural way by
\begin{equation*}
    K_{1} \odot K_{2}(u, v) = \int_{U} K_{1}(u, w) K_{2}(w, v) ~d\mu(u) 
\end{equation*}
where $\mu$ is a finite Borel measure on $U$, then the resulting space ends up being a non-unital ring. This because no kernel can serve as a multiplicative identity the way the identity matrix does for matrices. Even if we admit the Dirac delta $\delta(u - v)$ as the identity, no kernel would admit an inverse. On the other hand, we can directly consider the inverse of the integral operator $\bK$ induced by $K$ as
\begin{equation*}
    \bK f(u) = \int_{U} K(u, v) f(v) ~d\mu(v)
\end{equation*}
and define its support indirectly as follows: $U_{1} \times U_{2} \subset \supp(\bK^{-1})^{c}$ if for every pair $f, g$ in the range of $\bK$ such that $\supp f = U_{1}$ and $\supp g = U_{2}$, we have  $\langle f, \bK^{-1} g \rangle_{L^{2}(\mu)} = 0$. This parallels the matrix case, which can also be interpreted via quadratic forms $\mathbf{x}^{\top} \bP \mathbf{y}$ involving sparse vectors $\mathbf{x}, \mathbf{y}$. But this too is inconvenient given that $\bK^{-1}$ is unbounded in general, leading to delicate conditions on suitable test functions $f,g$ -- this is particularly awkward in a statistical context, where $\bK$ is to be estimated from finitely many observations, and hence the true RKHS is not identifiable. 

Unlike the inverse zero characterization (\ref{eqn:inv_zero}), the separation equation (\ref{eqn:sepn_eqn}) has the virtue of holding true \emph{regardless} of whether $U$ is finite or whether the covariance is boundedly invertible. {Furthermore, its defining inner product involves only a pair of specific functions specified by the covariance itself, and that are bona fide assured to be elements of the requisite RKHS.} But this comes at the expense of the condition being tedious to verify since one needs to exhaust all admissible combinations of $u$, $v$ and $W$. 

In Section \ref{sec:partitioning}, however, we will show that this shortcoming can be circumvented, by appealing to the notion of \emph{resolution}. Namely, we will show that an analogue of the inverse zero characterization (\ref{eqn:inv_zero}) holds even for infinite domains $U$, as long as we are willing to specify the graph $\Omega$ up to some finite resolution. We will furthermore show that the characterization behaves coherently under refinement of the resolution.

\subsection{The Graph of a Stochastic Process}

Theorem \ref{thm:condnl_ind} allows us to verify whether the conditional independence structure of a Gaussian process is compatible with a given graph, in the sense of the global Markov property (\ref{eqn:sepn_condn}). But it does not specify the graph, nor does it inform on the uniqueness of a graph compatible with a Gaussian process $X$. In the finite-dimensional setting, these questions are answered unequivocally: the zero pattern of the inverse covariance (\ref{eqn:inv_zero}) defines an adjacency matrix, so the question boils down to the invertibility of the covariance.

To address this question, we note that satisfaction of the separation equation is \emph{heritable} with respect to inclusion, that is, if $K$ satisfies the separation equation (\ref{eqn:sepn_eqn}) for a graph $\Omega$ then it also does so for every graph $\Omega'$ which contains $\Omega$ (see \textcite{waghmare2021}). Assume that the index set $U$ of $X$ is a compact subset of $\bbR^{n}$ with the natural topology. The previous observation suggests intersecting all compatible graphs to define \emph{the} graph of a process.

\begin{definition}\label{def:graph_X}
     We define the graph of $X$, denoted by $\Omega_{X}$,
    to be  the intersection of all closed graphs $\Omega$ for which the separation equation (\ref{eqn:sepn_eqn}) is satisfied by the covariance $K$ of $X$.
\end{definition}

Unlike the finite-dimensional setting, there is no guarantee that $X$ will satisfy the global Markov property (\ref{eqn:sepn_condn}) for $\Omega = \Omega_{X}$. This may seem dissatisfying given that we would have hoped $\Omega_{X}$ to be interpretable as the ``minimal'' graph satisfying the separation equation. But it does point to an interesting aspect special to the uncountably infinite index case, namely that satisfaction of the separation equation is not closed under infinite intersections. This means that for certain processes there is simply no ``minimal" graph for which the process satisfies the global Markov property. The following example illustrates this peculiar feature of uncountably infinite index sets.

\begin{example}\label{ex:brownian}
    Let $W = \{W_{t}\}_{t \in I}$ be the Brownian motion process on the unit interval $I$. Its covariance $K(u, v) = u \wedge v$ satisfies the separation equation for the strip $\Omega_{w} = \{(u,v): |u-v| \leq w\}$ for every $w > 0$. Indeed, if $u, v \in I$ are separated by some subinterval $J \subset I$, then we can assume without loss of generality that $u > v$ and by the Markov property $W_{u} = \bbE[W_{u}| W_{J}]$. By Lo\`{e}ve isometry, $K(u, \cdot) = \Pi_{J}K(u, \cdot)$ and by taking the inner product with $K(\cdot, v)$, we get $K(u, v) = \langle K(u, \cdot), K(\cdot, v) \rangle_{\cH(K_{J})}$.

    By definition, the graph of $W$ is given by $$\Omega_{W} = \cap_{w > 0}~ \Omega_{w} = \{(u, v) : u = v\}$$ which is the empty graph on $I$ in which no two vertices are adjacent. Note that $K$ does not satisfy the separation equation for $\Omega_{W}$ because that would mean that $K(u, v) = 0$, which is false. The same argument can be made for Gaussian processes which are Markov, multiple Markov (\textcite{hida1993}) or possess analytic covariances, as covered in Example \ref{ex:analytic}.
\end{example}

Determining the conditions under which $X$ satisfies the separation equation for $\Omega = \Omega_{X}$ seems to be a challenging technical problem interfacing the theory of infinite graphs and the analytical properties of covariances, and is beyond the scope of this article. However, if $U \times U$ is equipped with a metric, then one can make up for the gap in intuition resulting from this anomaly by thinking of the conditional independence structure of a process $X$ as being represented by $\Omega_{X} + \bbB_{\epsilon}$ instead of $\Omega_{X}$, where $\Omega_{X} + \bbB_{\epsilon}$ is the $\epsilon$\emph{-envelope} of $\Omega_{X}$, that is, the set of points within $\epsilon$ distance from $\Omega_{X}$ where $\epsilon$ can be taken to be arbitrarily small (see Figure \ref{fig:omega_and_pi1} (b)). For Gaussian processes on the unit interval which are Markov, multiple Markov or have analytic covariances, the conditional independence structure is then given by an  $\epsilon$-strip centered along the diagonal. This ``open" formulation rescues the sought intuition in situations like Example \ref{ex:brownian}.

The graph $\Omega_{X}$ (or its $\epsilon$-envelope) presents an interesting target for estimation given $n$ independent and identically distributed realizations of $X$. In Section \ref{sec:partitioning}, we shall present an analogue of the inverse zero characterization (\ref{eqn:inv_zero}) for kernels up to a finite resolution, and we shall present sufficient conditions on $K$ for $\Omega_{X}$ to be identifiable exactly or up to such a finite resolution.

\section{Resolving Uncountably Infinite Graphs}\label{sec:partitioning}

In this section, we shall recover an analogue of the inverse zero characterization (\ref{eqn:inv_zero}) for kernels. This will enable us to verify the global Markov property in a practically feasible manner, and will also allow us to deploy the well-established thresholding approach of graph recovery  (developed in Section \ref{sec:methodology}).

As previously argued, an \emph{exact} inverse zero characterization is unavailable and likely infeasible for our setting, in light of the notably dissimilar algebraic properties of kernels as compared to matrices. Our approach will thus consist of introducing an appropriate notion of \emph{resolution}, and contenting ourselves with a characterisation valid for any given finite resolution. That being said, we will also require that our characterisation be compatible across refinements of the resolution, and that it identify the true graph in the limit as resolution diverges.

From a mathematical point of view, resolving a graph consists of specifying a sequence of constructible approximations thereof.
From a statistical point of view, focussing on a finite resolution is arguably natural, or even necessary, since the number of potential graphs is uncountably infinite, and we need to infer the graph from finitely many realizations. 
Our estimation theory will reflect how the resolution can increase as a function of sample size, thus informing  how finely we can hope to discern the conditional independence structure of the process from a finite amount of data.

Our results thus far apply to any covariance kernel $K$ on any set $U$. Hereonwards, we shall additionally assume $K$ to be continuous and $U$ to be a compact subset of a Euclidean space equipped with a finite Borel measure $\mu$ supported on $U$. The results can be extended without much difficulty to more general sets with topological structure enabling a generalization of Mercer's theorem, however we shall stick to the compact Euclidean setting for simplicity.

\subsection{Resolution}\label{subsec:resolution}

Let $U$ be a compact subset of $\bbR^{d}$ equipped with a finite Borel measure $\mu$ supported on $U$. Let $K: U \times U \to \bbR$ be a continuous covariance kernel. We shall now make precise what we mean  by \emph{resolution} in this context.

\begin{figure}
    \begin{subfigure}{0.49\textwidth}
    \centering
    \begin{tikzpicture}
        \draw (0, 0) -- (5, 0) -- (5, 5) -- (0, 5) -- cycle;
        \draw [fill=green!15] 
        (0,0) -- 
        (2,0) .. controls (2.8,2.6) and (4,0.6) .. (5,3) --
        (5,5) -- 
        (3,5) .. controls (0.6,4) and (2.6,2.8) .. (0,2) -- cycle;
        \foreach \i in {1,...,19} {
            \draw[densely dotted, line width = 0.1 mm] (\i/4, 0) -- (\i/4, 5);
            \draw[densely dotted, line width = 0.1 mm] (0, \i/4) -- (5, \i/4);
        }
        \draw (1,1) node [fill=green!15] {$\Omega$};
    \end{tikzpicture}
   \caption{}
    \end{subfigure}
    \begin{subfigure}{0.49\textwidth}
    \centering
    \begin{tikzpicture}
        \draw (0, 0) -- (5, 0) -- (5, 5) -- (0, 5) -- cycle;
        \draw [fill=red!15] 
        (0/4,0/4) -- (9/4,0/4) -- (9/4, 2/4) -- 
        (10/4, 2/4) -- (10/4, 4/4) -- (11/4, 4/4) -- (11/4, 5/4) -- 
        (13/4, 5/4) -- (13/4, 6/4) -- (16/4, 6/4) -- (16/4, 7/4) -- 
        (18/4, 7/4) -- (18/4, 8/4) -- (19/4, 8/4) -- (19/4, 10/4) -- 
        (20/4, 10/4) -- (20/4, 20/4);
        
        \draw [rotate = -90, xscale = -1, fill=red!15] 
        (0/4,0/4) -- (9/4,0/4) -- (9/4, 2/4) -- 
        (10/4, 2/4) -- (10/4, 4/4) -- (11/4, 4/4) -- (11/4, 5/4) -- 
        (13/4, 5/4) -- (13/4, 6/4) -- (16/4, 6/4) -- (16/4, 7/4) -- 
        (18/4, 7/4) -- (18/4, 8/4) -- (19/4, 8/4) -- (19/4, 10/4) -- 
        (20/4, 10/4) -- (20/4, 20/4);
        \draw [dashed] 
        (0,0) -- 
        (2,0) .. controls (2.8,2.6) and (4,0.6) .. (5,3) --
        (5,5) -- 
        (3,5) .. controls (0.6,4) and (2.6,2.8) .. (0,2) -- cycle;
        \foreach \i in {1,...,19} {
            \draw[densely dotted, line width = 0.1 mm] (\i/4, 0) -- (\i/4, 5);
            \draw[densely dotted, line width = 0.1 mm] (0, \i/4) -- (5, \i/4);
        }
        \draw (1,1) node [fill=red!15] {$\Omega^{\pi}$};
    \end{tikzpicture}
    \caption{}
    \end{subfigure}
    \bigskip\medskip \caption{(a) A graph $\Omega$ on an interval and (b) its $\pi$-resolution approximation $\Omega^{\pi}$. Each cell of the grid represents a pixel $U_{i} \times U_{j}$ where $U_{i}, U_{j} \in \pi$.}
    \label{fig:omega_and_pi}
\end{figure}
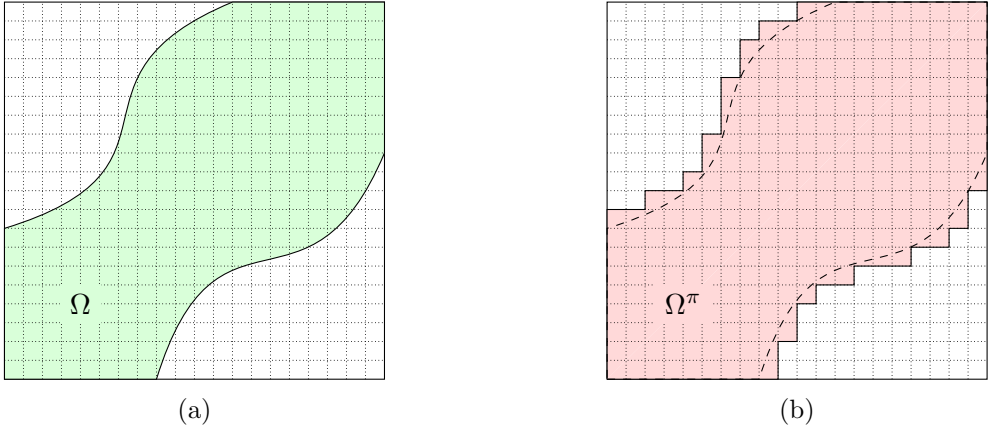

A \emph{partition} $\pi$ of $U$ is a finite collection $\{U_{j}\}_{j=1}^{p}$ such that (a) every $U_{j}$ is a Borel subset of $U$ such that $\mu(\tilde{U}) > 0$ for every nonempty subset $\tilde{U} \subset U_{j}$ which is relatively open in $U_{j}$, (b) $\{U_{j}\}_{j=1}^{p}$ is \emph{exhaustive} in that $\cup_{j=1}^{p} U_{j} = U$, and (c) \emph{disjoint} in that $U_{i} \cap U_{j} = 0$ for $i \neq j$. The additional technical conditions in (a) simply ensure that Mercer's theorem applies to $U_{j}$ individually as it does to $U$ as a whole. In common mathematical parlance, a partition need not be finite nor contain only Borel sets, but using the above definition lends brevity to our presentation.

We shall refer to sets of the form $U_{i} \times U_{j}$ for $1 \leq i,j \leq p$ as \emph{pixels}. A \emph{$\pi$-resolution graph} $\Omega \subset U \times U$ is a union of pixels which includes the pixels on the diagonal, that is, $\cup_{j=1}^{p} U_{j} \times U_{j} \subset \Omega$. 
Every graph $\Omega$ on $U$ admits a \emph{best $\pi$-resolution approximation} $\Omega^{\pi}$ defined as the intersection of all $\pi$-resolution graphs on $U$ which contain $\Omega$. 
Thus, $\Omega^{\pi}$ is the smallest $\pi$-resolution graph which contains $\Omega$. 
Alternatively, we can express $\Omega^{\pi}$ as the union of all $U_{i} \times U_{j}$ which intersect with $\Omega$. As above, we shall denote the best $\pi$-resolution approximation of a graph $\Omega$ on $U$ by $\Omega^{\pi}$. Figure \ref{fig:omega_and_pi} illustrates the difference between $\Omega$ and $\Omega^{\pi}$.

We shall denote by $\tilde{\Omega}^{\pi}$ the intersection $\cap_{\epsilon > 0}~(\Omega + \bbB_{\epsilon})^{\pi}$ where $\bbB_{\epsilon}$ denotes the intersection of the Euclidean ball of radius $\epsilon$ in $\bbR^{2d}$ with $U \times U$ and the sum $A + B$ denotes the set $\{a + b: a \in A \mbox{ and } b \in B\} \cap U \times U$. Because the sets ``decrease" as $\epsilon \to 0$ in that $\Omega_{X} + \bbB_{\epsilon_{1}} \subset \Omega_{X} + \bbB_{\epsilon_{2}}$ for $\epsilon_{1} < \epsilon_{2}$, we can also write $\tilde{\Omega}^{\pi}$ as $\lim_{\epsilon \to 0}~(\Omega_{X} + \bbB_{\epsilon})^{\pi}$. The distinction between $\Omega^{\pi}$ and $\tilde{\Omega}^{\pi}$ is mainly technical and is a consequence of the fact observed in Example \ref{ex:brownian} that for certain processes there is no minimal graph $\Omega$ for which the covariance satisfies the separation equation. For this reason and for lack of a better alternative, we shall refer to both $\Omega^{\pi}$ and $\tilde{\Omega}^{\pi}$ as the best $\pi$-resolution approximation of $\Omega$ while indicating which of the two we mean by their respective symbols.

\begin{example}\label{ex:graphs}
    A simple instance of how $\tilde{\Omega}_{X}$ can differ from $\Omega_{X}^{\pi}$ is given by the processes considered in Examples \ref{ex:analytic} and \ref{ex:brownian}, where $\Omega_{X} = \{(u, v): u = v\}$. Thus, $\Omega_{X}^{\pi} = \cup \{U_{i} \times U_{j} : | i - j | = 0 \} $ but $\tilde{\Omega}_{X}^{\pi} = \cup \{U_{i} \times U_{j} : | i - j | \leq 1 \}$ since the strip $\{(u, v): |u - v| < \epsilon\}$ always intersects the pixels $U_{i} \times U_{j}$ for which $|i - j| = 1$.  
\end{example}

\subsection{Approximate Inverse Zero Characterization}\label{subsec:prec_op_matrix}

We shall now show how one can recover the best $\pi$-resolution approximation of $\Omega$ from the covariance kernel $K(s,t) = \bbE[X_{s}X_{t}]$ of $X$. Let $K_{ij} = K|_{U_{i} \times U_{j}}$. For $1 \leq i,j \leq p$, let $\bK_{ij}: L^{2}(U_{j}, \mu) \to L^{2}(U_{i}, \mu)$ be the integral operator induced by the integral kernel $K_{ij}$ given by
\begin{equation*}
    \bK_{ij}f(u) = \int_{U_{j}} K_{ij}(u, v) f(v) ~d\mu(v)
\end{equation*}
Define the \emph{covariance operator matrix} $\bK_{\pi}$ induced by the partition $\pi$ as $\bK_{\pi} = [\bK_{ij}]_{i,j = 1}^{p}$. 
According to a well-known result of \cite{baker1973}, for $i \neq j$ there exists a unique bounded linear operator $\bR_{ij}: L^{2}(U_{j}, \mu) \to L^{2}(U_{i}, \mu)$ such that $\bC_{ij} = \smash{\bC_{ii}^{1/2}\bR_{ij}\bC_{jj}^{1/2}}$ and $\|\bR_{ij}\| \leq 1$, which can be calculated as $$\bR_{ij} = \smash{\bK_{ii}^{-1/2}\bK_{ij}^{\phantom{/}}\bK_{jj}^{-1/2}},$$ where $\smash{\bC_{ii}^{-1/2}}$ and $\smash{\bC_{jj}^{-1/2}}$ are the operator pseudoinverses of $\bC_{ii}^{1/2}$ and $\bC_{jj}^{1/2}$ respectively. We define the \emph{correlation operator matrix} $\bR_{\pi}$ induced by the partition $\pi$ as $\bR_{\pi} = [\bR_{ij}]_{i,j=1}^{p}$ specified entrywise by $\bR_{ii} = \bI$ and $\bR_{ij} = \smash{\bC_{ii}^{-1/2}\bC_{ij}\bC_{jj}^{-1/2}}$ for $i \neq j$. Alternatively, we can write $\bR_{\pi}$ as $$\bR_{\pi} = \bI + \smash{[\dg \bK_{\pi}]^{-1/2}(\bK_{\pi} - \dg \bK_{\pi})[\dg \bK_{\pi}]^{-1/2}}.$$ If $\bR_{\pi}$ is invertible and then we can define the \emph{precision operator matrix} $\bP_{\pi} = [\bP_{ij}]_{i,j=1}^{p}$ as the inverse of $\bR_{\pi}$, that is $\bP_{\pi}^{} = \bR_{\pi}^{-1}$. 

\smallskip
\noindent The key result can now be stated as follows:

\begin{theorem}\label{thm:best_piapprx}
    If $\bR_{\pi}$ is invertible, then the the graph $\Omega_{X}$ and the precision operator matrix $\bP_{\pi}$ induced by the partition $\pi$ are related as follows:
    \begin{equation}\label{eqn:omega_pi}
        \tilde{\Omega}_{X}^{\pi} \equiv \lim_{\epsilon \to 0}~(\Omega_{X} + \bbB_{\epsilon})^{\pi} \subset \cup~ \{U_{i} \times U_{j}: \|\bP_{ij}\| \neq 0\}.
    \end{equation}
    If, in addition, for every $\epsilon > 0$ there exists a partition $\pi_{\epsilon}$ of $U$ such that every pixel is contained within a ball of radius $\epsilon$ and $\bR_{\pi_{\epsilon}}$ is invertible, then the above relation is an equality. In other words, $\tilde{\Omega}_{X}^{\pi}$ is same as the union of $U_{i} \times U_{j}$ for $(i, j)$ such that $\bP_{ij} \neq \bzero$.
\end{theorem}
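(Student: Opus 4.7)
The plan is to reduce the theorem to a finite Hilbertian Gaussian graphical model on the pixel restrictions $X^{(k)} = \{X_u : u \in U_k\}$, which can be treated as Gaussian random elements in $L^2(U_k, \mu|_{U_k})$ since $K$ is continuous, and then to translate the conclusion back to the continuum via the separation equation of Theorem~\ref{thm:condnl_ind}. The bridge is the Hilbertian analogue of the classical inverse-zero characterization: under the standing invertibility of $\bR_\pi$,
\begin{equation*}
\|\bP_{ij}\| = 0 \;\Longleftrightarrow\; X^{(i)} \CI X^{(j)} \mid X^{(-ij)},
\end{equation*}
where $X^{(-ij)}$ denotes the remaining restrictions. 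This follows from Baker's theorem (already invoked in the definition of $\bR_\pi$) together with block-operator Schur complement identities; Gaussianity then converts the conditional independence into the vanishing of the conditional cross-covariance operator.

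Next, I would unpack this Hilbertian conditional independence pointwise. By Lo\`{e}ve isometry and the reproducing property, the vanishing conditional covariance translates to the integrated identity
\begin{equation*}
\int_{U_i}\!\!\int_{U_j} f(u) g(v) \bigl[K(u, v) - \langle K(u, \cdot), K(\cdot, v)\rangle_{\cH(K_W)}\bigr]\,d\mu(u)\,d\mu(v) = 0
\end{equation*}
for all $f \in L^2(U_i)$, $g \in L^2(U_j)$, with $W = \bigcup_{k \neq i, j} U_k$. Continuity of $K$ makes the bracketed quantity continuous in $(u, v)$, so density of test functions upgrades this to the pointwise separation equation on all of $U_i \times U_j$. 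Collecting these pointwise relations over all zero blocks, and invoking the pixel-level Hilbertian analogue of the Hammersley--Clifford step that pairwise inverse-zero relations yield the global Markov property on the pixel graph $G$ with edges $\{(i, j) : \|\bP_{ij}\| \neq 0\}$, one verifies that $X$ satisfies the separation equation with respect to the $\pi$-resolution graph $\Omega^{\bP} := \bigcup_{(i,j) \in E(G)} U_i \times U_j$.

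For the first containment, the closure $\overline{\Omega^{\bP}}$ still satisfies the separation equation (separators of a supergraph are automatically separators of its subgraph), so Definition~\ref{def:graph_X} forces $\Omega_X \subset \overline{\Omega^{\bP}}$. Passing to $\epsilon$-envelopes and pixelations then yields $\tilde{\Omega}_X^\pi \subset \Omega^{\bP}$. The subtle point is that a pixel $U_i \times U_j$ with $\|\bP_{ij}\|=0$ might conceivably touch $\overline{\Omega^{\bP}}$ at a shared boundary and thus sneak into $\tilde{\Omega}_X^\pi$; continuity of $K$ rules this out, as a shared boundary together with conditional independence across it would force $X$ to split discontinuously at that boundary. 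For the reverse containment under the refinement hypothesis, take partitions $\pi_\epsilon$ of mesh $\leq \epsilon$. Any pixel $(i,j)$ with $\|\bP_{ij}\| \neq 0$ has nonzero conditional cross-covariance at resolution $\epsilon$, which by continuity and compactness forces the existence of a point of $\Omega_X$ within $\bbB_{2\epsilon}$ of $U_i \times U_j$, so $\Omega^{\bP_{\pi_\epsilon}} \subset (\Omega_X + \bbB_{2\epsilon})^{\pi_\epsilon}$. Letting $\epsilon \to 0$ promotes the first containment to equality.

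I expect the main obstacle to be the Hilbertian Hammersley--Clifford step in the middle of the argument: upgrading pairwise inverse-zero relations at the operator-matrix level to the global Markov property on the pixel graph $G$. The finite-dimensional proof via density factorization does not transfer to the operator-matrix setting in the absence of a Lebesgue analogue on Hilbert spaces. Instead, one must propagate the block inverse zeros through iterated Schur complements, showing that non-edges can be algebraically eliminated without disturbing the other blocks. The adjacency-at-shared-boundaries subtlety in the closure step is a secondary technical issue that is handled by the continuity of $K$.
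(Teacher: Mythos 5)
Your steps (1)–(3) — Baker's theorem, the block Schur-complement identity equivalent to $\bP_{ij}=\bzero$, and the translation via Lo\`{e}ve isometry and continuity into the pointwise separation equation on $U_i\times U_j$ with separator $V=\cup_{k\neq i,j}U_k$ — match the opening of the paper's proof. The genuine gap is exactly the step you flag yourself: the ``Hilbertian Hammersley--Clifford'' upgrade from the blockwise inverse-zero relations to the global Markov property of the pixel graph $G$, which you need in order to claim that the union $\Omega^{\bP}$ of nonzero pixels satisfies the separation equation. That upgrade requires the separation equation for $u\in U_i$, $v\in U_j$ given \emph{proper} subsets of the remaining pixels that separate $i$ from $j$ in $G$, and you give no argument for it (you correctly note the density-factorization proof does not transfer, and ``iterated Schur complements'' is a plan, not a proof). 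Crucially, this step is avoidable: Definition \ref{def:graph_X} defines $\Omega_{X}$ as the intersection of \emph{all} closed graphs satisfying the separation equation, so to exclude a single zero pixel it suffices to exhibit \emph{one} compatible graph omitting it. The paper uses, for each $(i,j)$ with $\bP_{ij}=\bzero$, the ``fat'' supergraph $(U_i\cup V)^2\cup(V\cup U_j)^2$, whose only separator of $U_i$ from $U_j$ (up to supersets) is all of $V$ — precisely the configuration your steps (1)–(3) already cover. Intersecting over the zero blocks then gives $\Omega_X\subset\Omega^{\bP}$ with no pairwise-to-global argument at all.

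Your converse direction is also under-justified. The assertion that $\|\bP_{ij}\|\neq 0$ ``forces the existence of a point of $\Omega_X$ within $\bbB_{2\epsilon}$ of $U_i\times U_j$'' is the hard direction: membership in $\Omega_X$ requires that \emph{every} compatible closed graph contain the point, and nonvanishing of a single conditional cross-covariance block does not obviously certify this. The paper instead argues contrapositively: if $U_i\times U_j$ is disjoint from $\tilde{\Omega}_X^{\pi}$, it covers the closure of the pixel by finitely many balls each avoiding some compatible closed graph, passes to a refinement $\pi'$ whose pixels fit inside these balls (this is where the hypothesis on the partitions $\pi_\epsilon$ enters), obtains $\bP'_{i'j'}=\bzero$ for all subpixels, and then uses the partition-independence of the separation equation \eqref{eqn:sep_kernl} to travel back up to $\pi$ and conclude $\bP_{ij}=\bzero$. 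Your boundary-contact remark is a reasonable observation, but the two central bridges of your argument — the global Markov upgrade and the converse implication — are both left unproved, and the first of them is a detour the theorem does not require.
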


Thus by discerning which entries of the partition-induced correlation operator matrix are zero, one can work out a finite resolution approximation $\tilde{\Omega}_{X}^{\pi}$ of $\Omega_{X}^{}$. It follows immediately that $\tilde{\Omega}_{X}^{\pi}$ is identifiable if $\bP_{\pi}$ is invertible. We expect that the technical condition for equality is an artifact of our proof technique, and not an essential feature of the problem.

\subsection{Refinement and Identifiability}

If we know $\Omega^{\pi_{1}}$ and $\Omega^{\pi_{2}}$ then we can get a finer approximation of $\Omega$ by simply taking their intersection. The resulting graph $\Omega^{\pi_{1} \wedge \pi_{2}} = \Omega^{\pi_{1}} \cap \Omega^{\pi_{2}}$ is the best $(\pi_{1} \wedge \pi_{2})$-approximation where the partition $\pi_{1} \wedge \pi_{2}$ is the \emph{refinement} of the partitions $\pi_{1}$ and $\pi_{2}$ given by $\{U_{1} \cap U_{2} : U_{1} \in \pi_{1} \mbox{ and } U_{2} \in \pi_{2}\}$ which is in other words composed of the intersections of the sets in the original partitions. We shall say that $\pi_{2}$ is \emph{finer} than $\pi_{1}$ if $\pi_{2} = \pi_{1} \wedge \pi_{2}$. We can define the refinement of a countable number of partitions $\{ \pi_{j} \}_{j=1}^{\infty}$ as
\begin{equation*}
    \wedge_{j=1}^{\infty} \pi_{j} = \{ \cap_{j = 1}^{\infty} U_{j} : U_{j} \in \pi_{j} \mbox{ for } j \geq 1 \}
\end{equation*}
and thus if we know $\Omega^{\pi_{j}}$ for $j \geq 1$ then the best $\pi$-resolution approximation for $\pi = \wedge_{j=1}^{\infty} \pi_{j}$ is given by $\Omega^{\pi} = \cap_{j=1}^{\infty} \Omega^{\pi_{j}}$. Additionally, we shall say that the partitions $\{ \pi_{j}\}_{j=1}^{\infty}$ \emph{separate points on} $U$ if $\wedge_{j = 1}^{\infty} \pi_{j} = \{ \{u\} : u \in U \}$.

We shall say that $\Omega_{X}$ is \emph{identifiable up to $\pi$-resolution} if its best $\pi$-resolution approximation $\tilde{\Omega}_{X}^{\pi}$ is identifiable. Moreover, we shall say that $\Omega_{X}$ is identifiable \emph{exactly} if its closure in $U$ is identifiable. In essence, the distinction between $\Omega_{X}$ and its closure does not concern us here, nor is it amenable to our method. The following corollary is now almost immediate from Theorem \ref{thm:best_piapprx} and gives sufficient conditions for identifiability of $\Omega_{X}$.
\begin{corollary}\label{cor:identifiability}
    Let $X$ be a Gaussian process on $U$ with a continuous covariance. If $\pi$ is a partition of $U$ such that the correlation operator $\bR_{\pi}$ is invertible, then $\Omega_{X}$ is identifiable up to $\pi$-resolution. 
    
    Furthermore, if there exists a sequence $\{\pi_{j}\}_{j = 1}^{\infty}$ of partitions on $U$ such that (a) the correlation operators $\bR_{\pi_{j}}$ are invertible and (b) the partitions separate points on $U$, then $\Omega_{X}$ is identifiable exactly.
\end{corollary}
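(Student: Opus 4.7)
The plan is to derive both parts as direct consequences of Theorem \ref{thm:best_piapprx} combined with a simple refinement argument driven by separation of points. The key observation is that the precision operator matrix $\bP_\pi = \bR_\pi^{-1}$ depends only on the covariance $K$ of $X$: from $K$ one reads off every block integral operator $\bK_{ij}$, hence the full operator matrix $\bK_\pi$, hence $\bR_\pi$ after normalizing with the square roots of the diagonal blocks, and finally $\bP_\pi$ whenever $\bR_\pi$ is invertible. Consequently, the block-sparsity pattern of $\bP_\pi$ is a functional of the distribution of $X$.

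For the first assertion, invertibility of $\bR_\pi$ furnishes $\bP_\pi$, and Theorem \ref{thm:best_piapprx} identifies $\tilde{\Omega}_X^\pi$ with $\bigcup\{U_i \times U_j : \bP_{ij} \neq \bzero\}$. Since this union is computable from $K$ alone, the best $\pi$-resolution approximation $\tilde{\Omega}_X^\pi$ is identifiable, which is by definition what it means for $\Omega_X$ to be identifiable up to $\pi$-resolution. This part is essentially a direct restatement of the theorem.

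For the second assertion, the first assertion applied to each $\pi_j$ renders every $\tilde{\Omega}_X^{\pi_j}$ identifiable from $K$; I would then extract the closure of the exact graph as the countable intersection, and the equality I need to verify is
\begin{equation*}
\overline{\Omega_X} \;=\; \bigcap_{j \geq 1} \tilde{\Omega}_X^{\pi_j}.
\end{equation*}
The inclusion $\overline{\Omega_X} \subset \bigcap_j \tilde{\Omega}_X^{\pi_j}$ is immediate because any pixel meeting $\overline{\Omega_X}$ meets $\Omega_X + \bbB_\epsilon$ for every $\epsilon > 0$. For the reverse inclusion, I would argue by contradiction: given $(u, v) \notin \overline{\Omega_X}$, compactness of $\overline{\Omega_X}$ together with separation of points by $\{\pi_j\}$ allows me to exhibit, for each $(u',v') \in \overline{\Omega_X}$, some $\pi_{j(u',v')}$ whose pixel through $(u,v)$ excludes $(u',v')$; a finite subcover argument then produces a $\pi_j$ (or a finite joint refinement) whose pixel through $(u,v)$ is disjoint from $\overline{\Omega_X}$, and hence from some $\Omega_X + \bbB_\epsilon$, forcing $(u, v) \notin \tilde{\Omega}_X^{\pi_j}$.

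The main obstacle is precisely this last step: separation of points supplies exclusions individually, point by point in $\overline{\Omega_X}$, rather than uniformly a single $\pi_j$ that exiles all of $\overline{\Omega_X}$ from the pixel through $(u,v)$. The natural remedy is to extract a finite subcover of $\overline{\Omega_X}$ by neighborhoods where these individual exclusions hold and pass to the joint refinement $\pi_{j_1} \wedge \cdots \wedge \pi_{j_N}$; the resulting pixel through $(u,v)$ is then disjoint from $\overline{\Omega_X}$. A delicate point is whether this joint refinement also has invertible correlation operator matrix so that Part 1 applies to it directly, but the authors have already flagged this kind of invertibility condition as likely being an artifact of proof technique, and in any case the equality above can be read off from the individual $\tilde{\Omega}_X^{\pi_j}$ once the exclusion has been established for some $\pi_j$ in the chosen finite collection. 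The geometric transfer from the joint refinement back to a constituent partition is thus where the continuum subtleties concentrate; everything else is transparent bookkeeping around Theorem \ref{thm:best_piapprx}.
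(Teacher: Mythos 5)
Your first part and the overall architecture of your second part match the paper's proof: the paper likewise disposes of the first assertion as an immediate consequence of Theorem \ref{thm:best_piapprx}, and proves the second by establishing $\overline{\Omega_{X}} = \cap_{j}\tilde{\Omega}_{X}^{\pi_{j}}$ through finite joint refinements and separation of points. The gap is in your final reduction. From the fact that the pixel of $\pi_{j_{1}}\wedge\cdots\wedge\pi_{j_{N}}$ containing $(u,v)$ is disjoint from $\overline{\Omega_{X}}$ (indeed from some $\Omega_{X}+\bbB_{\epsilon}$), you conclude that $(u,v)\notin\tilde{\Omega}_{X}^{\pi_{j_{i}}}$ for some \emph{single} constituent partition. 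This does not follow: each constituent pixel through $(u,v)$ is a superset of the refinement pixel, and $\Omega_{X}+\bbB_{\epsilon}$ may meet every one of these larger pixels outside their common intersection (write $P_{i}$ for the $\pi_{j_{i}}$-pixel through $(u,v)$, and picture $\overline{\Omega_{X}}$ passing through $P_{1}\setminus P_{2}$ and through $P_{2}\setminus P_{1}$ while avoiding $P_{1}\cap P_{2}$). In that situation $(u,v)$ belongs to every $\tilde{\Omega}_{X}^{\pi_{j_{i}}}$, no single constituent excludes the point, and the equality $\overline{\Omega_{X}}=\cap_{j}\tilde{\Omega}_{X}^{\pi_{j}}$ that you need is not yet established.

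The paper shoulders this burden differently: it first asserts the set identity $\tilde{\Omega}_{X}^{\pi_{1}}\cap\tilde{\Omega}_{X}^{\pi_{2}}=\tilde{\Omega}_{X}^{\pi_{1}\wedge\pi_{2}}$, so that excluding $(u,v)$ from the refinement's approximation automatically excludes it from the intersection of the constituents' approximations, and it then argues that the relevant zero entries of the refinement's precision operator matrix can be reconstructed by combining the zero patterns of all the $\bP_{\pi_{j}}$, $1 \leq j \leq k$, through the partition-independent separation equation (\ref{eqn:sep_kernl}) --- precisely so that invertibility of $\bR_{\pi_{1}\wedge\cdots\wedge\pi_{k}}$ need never be invoked. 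In other words, the exclusion is certified by the constituent precision operator matrices \emph{collectively}, not by any one of them. To repair your argument you should either prove the nontrivial inclusion $\cap_{i}\tilde{\Omega}_{X}^{\pi_{j_{i}}}\subset\tilde{\Omega}_{X}^{\wedge_{i}\pi_{j_{i}}}$ or replicate the ``work backwards through the separation equation'' step from the proof of Theorem \ref{thm:best_piapprx}. Your finite subcover of $\overline{\Omega_{X}}$ is a reasonable substitute for the paper's shrinking-pixel argument (and you would also need to arrange that the refinement pixel sits at \emph{positive distance} from $\overline{\Omega_{X}}$, since a non-closed pixel disjoint from $\overline{\Omega_{X}}$ can still meet every $\Omega_{X}+\bbB_{\epsilon}$), but it only gets you as far as the joint refinement, not back to an individual $\pi_{j}$.
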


The criteria for exact identifiability may appear to be too demanding but they are required only for an \emph{infinite} resolution or \emph{exact} identifiability of $\Omega$. For applications, we can always content ourselves with identifiability up to $\pi$-resolution for a reasonably fine partition $\pi$ which would only require that the correlation operator $\bR_{\pi}$ induced by $\pi$ be invertible. 

\subsection{Relation to Functional Graphical Models}\label{subsec:func_graph_models}

Consider the \emph{functional graphical model} introduced in \textcite{qiao2019} in which the set of vertices consists of $\bX = (X_{1}, \dots, X_{p})$ where every $X_{k}$ is a random real-valued function on an interval $I_{k}$ and there is an edge between $X_{i}$ and $X_{j}$ unless
\begin{equation*}
    \Cov[X_{i}(u), X_{j}(v)| X_{k}(w) \mbox{ for } k \neq i,j \mbox{ and } w \in I_{k}] = 0 \mbox{ for } u \in I_{i} \mbox{ and } v \in I_{j}.
\end{equation*}
If we define 
$$U=\bigsqcup_{j=1}^p I_j= \cup_{j=1}^{p} \{j\} \times I_{j}$$ to be the \emph{disjoint union} of $\{I_1,\ldots,I_p\}$, the vector-valued function $\bX = (X_{1}, \dots, X_{p})$ can be thought of as a single real-valued stochastic process $X = \{X_u: u\in U\}= \{X_{j}(t): 1 \leq j \leq p\,,\, t \in I_j\}$ indexed by both $j$ and $t$. This can be visualized by serially concatenating successive vector components (see Figure \ref{concatenation}) and the set $U$ can thus be thought of as a compact subset of $\bbR$. Recovering the graph of $\bX$ in the functional sense reduces to recovering the graph of $\Omega_{X}$ in the uncountably indexed sense, but only up to a specific $\pi$-resolution, namely where the partition $\pi$ consists of the sets $\{(j,I_j)\}_{j=1}^{p}$. Thus, $\Omega_{\bX} \equiv \tilde{\Omega}_{X}^{\pi}$. This restriction highlights the fact  that functional graphical models concern interactions \emph{between} the random functions $\{X_{j}\}_{j = 1}^{p}$ and not with interactions \emph{within} a random function $X_{j}$ -- the latter requires the notion of coherently resolving an uncountable graph. Furthermore, in the same vein, it shows that functional graphical models can be thought of as special cases of our more general uncountably indexed graphical models.

\begin{figure}
\begin{center}
\includegraphics[scale=0.5, trim={0 5cm 0 1cm}, clip]{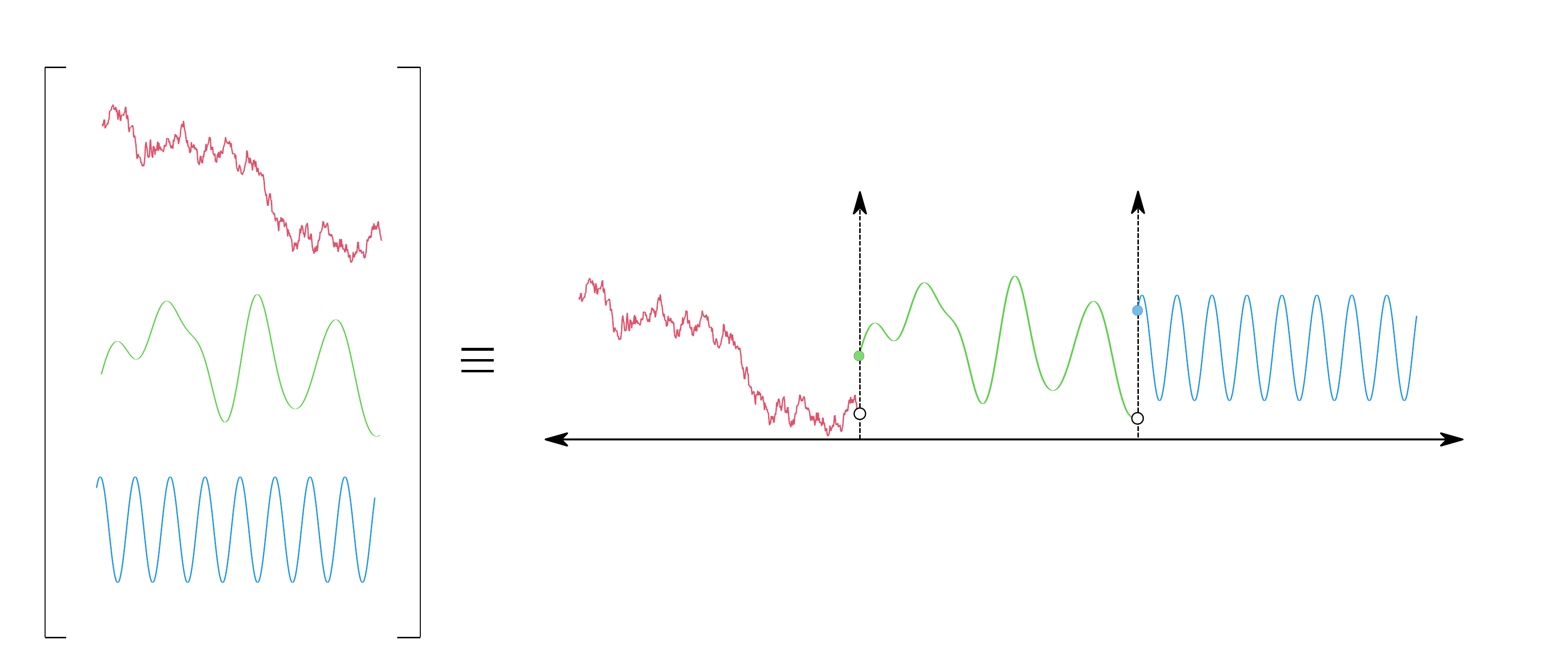}
\bigskip\medskip\caption{A functional graphical model can be seen as a single stochastic process by concatenating successive vector components.}
\label{concatenation}
\end{center}
\end{figure}

\section{Graph Recovery}\label{sec:methodology}

Given a partition $\pi$ of the index set $U \subset \bbR^{d}$, we now present our approach to the problem of recovering the graph $\Omega_{X}$ of a process $X$ given an estimate $\hbK$ of the covariance operator $\bK$.  
This amounts to determining the non-zero entries of the $\pi$-induced precision operator matrix $\bP_{\pi}=\bR_{\pi}^{-1}$.
Evidently, for the last statement to make sense at all, we must assume that $\bR_\pi$ is indeed invertible. Consequently, any consistent estimator of $\bR_\pi$ based on a sample of size $n$ will be eventually invertible as $n$ increases. Whenever the inverse of such an estimator appears, it is implicit that $n$ is sufficiently large.

Since the partition $\pi$ that induces the operators $\bK_{\pi}$, $\bR_{\pi}=\bP_{\pi}^{-1}$ is the same, we shall denote these operators simply as $\bK$, $\bR$, and $\bP=\bR^{-1}$ whenever there is no danger of confusion.  By writing $\bK = \dg \bK + \bK_{0}$, the correlation operator matrix can be expressed as 
$$\bR = \bI + [\dg \bK]^{-1/2}\bK_{0}[\dg \bK]^{-1/2}.$$
Thus the diagonal entries $\bR_{ii}$ of $\bR$ are all equal to identity and we need not burden ourselves with their estimation. Furthermore, since we are effectively trying to invert the compact operator $\dg \bK$, regularization is necessary, and is implemented by adding a ridge of size $\kappa$. Once an estimator of the precision operator matrix is formed, we threshold it entrywise in operator norm to estimate $\Omega_{X}$.

\medskip
\noindent In summary, the plug-in estimation procedure consists of the following two steps:
\begin{enumerate}[label = {\bfseries Step \arabic*.}, leftmargin = 2cm] 
    \item \textbf{Estimation.}    
        Given an estimate $\hbK$ of the covariance operator matrix $\bK$, we estimate the correlation operator matrix as follows:
        \begin{eqnarray*}
              \hbR &:=&  \bI + [\kappa\bI + \dg \hbK]^{-1/2}\hbK_{0}[\kappa\bI + \dg \hbK]^{-1/2}, 
        \end{eqnarray*}
      where $\kappa>0$ is a ridge parameter. If $\dg \hbK$ is not positive, we choose $\kappa$ such that $\kappa + \min_{j} \lambda_{j}(\dg \hbK) \geq 0$, so as to ensure $[\kappa\bI + \dg \hbK]^{-1/2}$ is well defined. 

    \item \textbf{Thresholding.} The estimate $\hat{\Omega}^{\pi}$ of the best $\pi$-resolution approximation $\tilde{\Omega}_{X}^{\pi}$ is calculated as
    \begin{equation*}\label{eqn:hat_omega_pi}
        \hat{\Omega}^{\pi}(\rho) = \cup~ \{U_{i} \times U_{j}: \|\hbP_{ij}\| > \rho\}, 
    \end{equation*}
    where $\hbP = \hbR^{-1}$ and $\rho>0$ is a thresholding parameter. 
\end{enumerate}

There are two tuning parameters involved in the procedure: the ridge $\kappa$, and the threshold $\rho$. Their choice is guided via our asymptotic theory (see Section \ref{sec:estimation}), in relation to the sample size $n$ and the partition size $p$ (the partition $\pi$ will typically be a regular partition into $p$ intervals of equal length). Practical rules for their choice are discussed in Section \ref{sec:implementation}.

Notice that our method of graph recovery relies only upon having a consistent estimator of the covariance. It is therefore very flexible, as it can be applied to any setting where such an estimator is available, including  \emph{serially dependent} samples (such as functional time series). For the same reason, the method is indifferent to the observation regime at hand (complete, dense, sparse) or the presence of measurement error, since all these settings are known to admit consistent estimators of the covariance. It is in this sense that our methodology is \emph{plug-in}.

We also remark that the ridge estimator of the correlation operator matrix in Step 1 is essentially the same as the estimator introduced by \textcite{lisolea2018} in the context of graphical models for random vectors with Hilbertian entries, adapted to our setting. Though the context is somewhat different, there are direct parallels to be drawn, and hence, we occasionally compare to their asymptotic analysis in the next section.

\section{Theoretical Guarantees}

We now turn to establishing performance guarantees for our estimators at a given resolution, in both an asymptotic and a non-asymptotic setting:

\begin{itemize}
\item[] \emph{Asymptotic Guarantees}
 Our general methodology, as described in the last section, is of \emph{plug-in} type: it takes any consistent covariance estimator as input, and outputs the corresponding estimators for the correlation, precision, and graph. This allows the user to employ the covariance estimator most suitable for the observation regime they are working with. In this vein, Section \ref{sec:estimation} correspondingly develops plug-in rates of convergence, which take as input the rate of convergence of the chosen covariance estimator at the given regime, and yield the rate of convergence of the other estimands.

\item[] \emph{Non-Asymptotic Guarantees}. 
Beyond rates of convergence, which are asymptotic in nature, we also consider \emph{finite-sample guarantees} for the various possible observation regimes, in Section \ref{sec:finite_sample}. Finite sample guarantees are by nature specific to the estimator used, which in turn needs to be tailored to the corresponding sampling regime. Hence, we develop new covariance estimators and associated finite sample bounds in each regime, as a first step, and obtain corresponding bounds on for the correlation, precision, and graph.   
\end{itemize}

Finally, in Section \ref{sec:consistency} we make use of our non-asymptotic theory to address the question of recovering the continuum version of the graph $\Omega_X$, i.e. how to successively refine the partition $\pi$ as sample size increases, in order to construct a consistent estimator at infinite resolution.

\subsection{Plug-in Rates of Convergence}\label{sec:estimation}

In this section, we develop asymptotic guarantees for our procedure by deriving rates of convergence in operator norm for the estimators $\hbR$ and $\hbP$ in terms of that of the given covariance estimator $\hbK$, and establishing model selection consistency for a given resolution. As remarked in the previous section, the ridge estimator is of the same form as in \textcite{lisolea2018}, and thus we opt to work with the same regularity conditions. We improve upon their results by proving better and simpler rates of convergence for the estimation of the correlation operator.

Recall that we defined our estimator of the correlation operator matrix as
\begin{equation}\label{eqn:estmr_corr}
    \hbR = \bI + [\kappa_{n}\bI + \dg \hbK]^{-1/2}\hbK_{0}[\kappa_{n}\bI + \dg \hbK]^{-1/2},
\end{equation}
for $\hbK$ our estimator of the covariance operator matrix, and $\kappa_{n}$ the regularization parameter. The error $\hbR - \bR$ of estimating $\bR$ using $\hbR$ can be split into estimation error $\cE = \hbR - \bR_{e}$ (related to variance) and approximation error $\cA = \bR_{e} - \bR$ (related to bias). To control the approximation error, we will require the following regularity condition on $\bR$:
\begin{assumption}\label{asm:regularity}
    For some bounded operator matrix $\Phi_{0}$ with all the diagonal entries zero and $\beta > 0$, we have
    \begin{equation}\label{eqn:regul_condn}
        \bR_{0} = [\dg \bK]^{\beta}\Phi_{0}[\dg \bK]^{\beta}.
    \end{equation}
\end{assumption}
Note that this implies that $\bR_{0}$ is compact. 
The assumption simply ensures that $\bK_{0} = [\dg \bK]^{1/2+\beta}\Phi_{0}[\dg \bK]^{1/2+\beta}$ is linearly well-conditioned for inversion by $[\dg \bK]^{1/2}$. 

\smallskip
\noindent Our first result now relates $\|\hbR-\bR\|$ to $\|\hbK-\bK\|$, $\bK$ and $\|\bR\|$:
\begin{theorem}[Bounding $ \|\hbR-\bR\| $ ]\label{thm:error_corr}
    Under Assumption \ref{asm:regularity}, given any sequences $\kappa_n>0$ and $\delta_n\geq \|\hbK-\bK\| $, we have
    \begin{equation*}
        \|\hbR-\bR\| 
        \leq \|\mathcal{A}\| + \|\mathcal{E}\|\leq 5 \cdot \| \bR \| 
        \cdot \left[(\delta_{n}/\kappa_{n})^{2} + (\delta_{n}/\kappa_{n}) \right]
        + 2 \cdot \kappa_{n}^{\beta \wedge 1} 
        \cdot \|\Phi_{0}\| 
        \cdot \|\bK\|^{2\beta - \beta \wedge 1}.
    \end{equation*}
    The estimator $\hbR$ is consistent so long as the regularization parameter $\kappa_{n}$ is chosen such that $\kappa_{n} \to 0$ and $\delta_{n}/\kappa_{n} \to 0$ as $n \to \infty$. The optimal rate is given by
    \begin{equation*}
        10 \cdot (\|\bR\| \vee \|\Phi_{0}\|\|\bK\|^{2\beta - \beta \wedge 1}) \cdot \delta_{n}^{\frac{\beta \wedge 1}{1 + \beta \wedge 1}}
    \end{equation*}
    and it is achieved for the choice $\kappa_{n} = \delta_{n}^{\frac{1}{1 + \beta \wedge 1}}$.
\end{theorem}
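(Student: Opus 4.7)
My plan is to write $\hbR - \bR = \mathcal{E} + \mathcal{A}$, with the approximation error $\mathcal{A} = \bR_{e} - \bR$ and the estimation error $\mathcal{E} = \hbR - \bR_{e}$, where $\bR_{e} := \bI + (\kappa_{n}\bI + D)^{-1/2}\bK_{0}(\kappa_{n}\bI + D)^{-1/2}$ with $D := \dg\bK$ is the population-level ridge correlation. Assumption \ref{asm:regularity} furnishes the key factorisations $\bR_{e} - \bI = T \Phi_{0} T$ with $T := D^{1/2+\beta}(\kappa_{n}\bI + D)^{-1/2}$, and $\bR - \bI = T_{0}\Phi_{0}T_{0}$ with $T_{0} := D^{\beta}$. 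Telescoping gives $\mathcal{A} = (T - T_{0})\Phi_{0} T + T_{0}\Phi_{0}(T - T_{0})$, and therefore $\|\mathcal{A}\| \leq \|T - T_{0}\| \|\Phi_{0}\| (\|T\| + \|T_{0}\|)$. The functional calculus on $D$ immediately yields $\|T\|,\|T_{0}\| \leq \|D\|^{\beta} \leq \|\bK\|^{\beta}$, while the spectrum of $T - T_{0}$ consists of the values $\lambda^{\beta}\bigl|\lambda^{1/2}/(\kappa_{n}+\lambda)^{1/2} - 1\bigr|$. Using the elementary inequality $|\lambda^{1/2}/(\kappa_{n}+\lambda)^{1/2} - 1| \leq \kappa_{n}/(\kappa_{n}+\lambda)$ and case-analysis on $\beta$ (for $\beta \leq 1$ one checks $\sup_{\lambda \geq 0}\lambda^{\beta}\kappa_{n}/(\kappa_{n}+\lambda) = \kappa_{n}^{\beta}$; for $\beta > 1$ one bounds $\lambda^{\beta}\kappa_{n}/(\kappa_{n}+\lambda) \leq \kappa_{n}\lambda^{\beta - 1} \leq \kappa_{n}\|\bK\|^{\beta - 1}$) then yields $\|T - T_{0}\| \leq \kappa_{n}^{\beta \wedge 1}\|\bK\|^{(\beta - 1)_{+}}$, producing the advertised $2\kappa_{n}^{\beta \wedge 1}\|\Phi_{0}\|\|\bK\|^{2\beta - \beta \wedge 1}$ bound on $\|\mathcal{A}\|$.

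\textbf{Estimation error.} Setting $N := (\kappa_{n}\bI + D)^{-1/2}$, $\hat N := (\kappa_{n}\bI + \dg\hbK)^{-1/2}$, and $\Delta' := \hbK_{0} - \bK_{0}$, I decompose
\begin{equation*}
    \mathcal{E} = \hat N \Delta' \hat N + (\hat N - N)\bK_{0}\hat N + N \bK_{0}(\hat N - N).
\end{equation*}
The first term is bounded crudely by $\|\hat N\|^{2}\|\Delta'\| \leq 2\delta_{n}/\kappa_{n}$, using $\|\Delta'\| \leq 2\|\hbK - \bK\| \leq 2\delta_{n}$ (since the diagonal extraction is a contraction in operator norm) and $\|\hat N\|^{2} \leq 1/\kappa_{n}$. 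The essential manoeuvre for the other two terms is the refactorisation $\bK_{0} = D^{1/2}(\bR - \bI)D^{1/2}$, which lets one absorb the factors $D^{1/2}$ into $\hat N - N$ rather than bounding them via the catastrophic $\|\bK_{0}\| \leq \|\bK\|\|\bR - \bI\|$. The integral representation $A^{-1/2} = \pi^{-1}\int_{0}^{\infty} s^{-1/2}(s\bI + A)^{-1}\,ds$ gives
\begin{equation*}
    \hat N - N = -\pi^{-1}\int_{0}^{\infty} s^{-1/2}\, (s\bI + \kappa_{n}\bI + \dg\hbK)^{-1}(\dg\hbK - D)(s\bI + \kappa_{n}\bI + D)^{-1}\,ds,
\end{equation*}
and combining this with the AM--GM spectral bound $\|(s\bI + \kappa_{n}\bI + D)^{-1}D^{1/2}\| \leq 1/(2\sqrt{s+\kappa_{n}})$ and a Beta-function evaluation yields $\|(\hat N - N)D^{1/2}\| \leq \delta_{n}/(\pi\kappa_{n})$. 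Together with $\|\bR - \bI\| \leq 2\|\bR\|$ and the companion bound $\|\hat N D^{1/2}\| \leq 1 + \delta_{n}/(\pi\kappa_{n})$, the middle and last terms each contribute at most $2\|\bR\|\delta_{n}/(\pi\kappa_{n})$ plus a quadratic correction $O(\|\bR\|(\delta_{n}/\kappa_{n})^{2})$. Summing all three contributions and simplifying constants (using $\|\bR\| \geq 1$) gives $\|\mathcal{E}\| \leq 5\|\bR\|[(\delta_{n}/\kappa_{n})^{2} + \delta_{n}/\kappa_{n}]$.

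\textbf{Rate optimisation and main obstacle.} Adding the two bounds yields an overall estimate of the form $C_{1}\|\bR\|\delta_{n}/\kappa_{n} + C_{2}\kappa_{n}^{\beta\wedge 1}$; equating powers by solving $\delta_{n}/\kappa_{n} = \kappa_{n}^{\beta \wedge 1}$ forces the choice $\kappa_{n} = \delta_{n}^{1/(1 + \beta \wedge 1)}$ and produces the stated rate $\delta_{n}^{(\beta \wedge 1)/(1 + \beta \wedge 1)}$. I expect the estimation-error bound to be the principal obstacle: any direct approach exploiting only $\|\bK_{0}\|$ or a bare operator-Lipschitz bound on $x \mapsto x^{-1/2}$ introduces an extra factor $\|\bK\|/\kappa_{n}$ or $\|\bK\|^{1/2}/\kappa_{n}^{1/2}$ that would destroy the rate. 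Only by keeping $D^{1/2}$ tied to a resolvent through the integral representation does one obtain a bound depending solely on $\|\bR\|$ and powers of $\delta_{n}/\kappa_{n}$. A secondary subtlety lies in the approximation error for $\beta > 1$, where one must use the sharper estimate $\kappa_{n}/(\kappa_{n} + \lambda)$ (rather than the naive $\kappa_{n}^{1/2}/\lambda^{1/2}$) to secure the correct linear $\kappa_{n}$ dependence rather than a spurious $\kappa_{n}^{\beta}$.
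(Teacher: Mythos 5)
Your proposal is correct, and it reproduces the paper's overall architecture: the same splitting $\hbR-\bR=\mathcal{E}+\mathcal{A}$ around the population ridge correlation $\bR_{e}$, the same crucial device of absorbing the factors $[\dg\bK]^{1/2}$ into the square-root differences via $\bK_{0}=[\dg\bK]^{1/2}\bR_{0}[\dg\bK]^{1/2}$ (so that only $\|\bR\|$, and never $\|\bK_{0}\|$ or $\|\bK\|/\kappa_{n}$, enters the estimation error), and the same spectral-calculus treatment of the approximation error with the case split at $\beta=1$; your three-term decomposition of $\mathcal{E}$ is algebraically identical to the paper's five-term one after regrouping. The one genuine difference is how you prove the key estimate $\bigl\|\bigl([\kappa_{n}\bI+\dg\hbK]^{-1/2}-[\kappa_{n}\bI+\dg\bK]^{-1/2}\bigr)[\dg\bK]^{1/2}\bigr\|\lesssim \delta_{n}/\kappa_{n}$ (the paper's Lemma \ref{lem:approx_terms}): the paper derives it from an algebraic identity for the difference of inverse square roots, whereas you use the integral representation $A^{-1/2}=\pi^{-1}\int_{0}^{\infty}s^{-1/2}(s\bI+A)^{-1}\,ds$ combined with the second resolvent identity and an AM--GM bound on $\|(s\bI+\kappa_{n}\bI+\dg\bK)^{-1}[\dg\bK]^{1/2}\|$. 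Your route is slightly longer but is rigorous without any commutativity between $\dg\hbK$ and $\dg\bK$ (the paper's identity leans on a factorization of $X^{2}-Y^{2}$ into $(X+Y)^{-1}$ times the difference of the squares, which is delicate for noncommuting square roots), and it yields the marginally sharper constant $\delta_{n}/(\pi\kappa_{n})$ in place of $\delta_{n}/\kappa_{n}$; both deliver the stated $5\|\bR\|[(\delta_{n}/\kappa_{n})+(\delta_{n}/\kappa_{n})^{2}]$ envelope, and your rate optimisation matches the paper's. One caveat you share with the paper's main lemma: the bounds $\|[\kappa_{n}\bI+\dg\hbK]^{-1}\|\leq\kappa_{n}^{-1}$ and $\|(s\bI+\kappa_{n}\bI+\dg\hbK)^{-1}\|\leq(s+\kappa_{n})^{-1}$ tacitly assume $\dg\hbK$ is positive; the paper relegates the non-positive case to Remark \ref{rem:nonpos_estmtr}, and your argument would need the analogous adjustment there.
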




\begin{remark}
    If $\hbK$ is the empirical covariance, $\delta_n$ is $\cO_{\bbP}(n^{-1/2})$, the optimal choice of the regularization parameter is given by $\kappa_{n} \asymp n^{-1/2(\beta \wedge 1 + 1)}$ and we obtain the rate of convergence $\|\hbR-\bR\| = \cO_{\bbP}\big(n^{-\beta \wedge 1/2(\beta \wedge 1 + 1)}\big)$. Note that  when $\beta$ ranges in $(0, 1/2]$, the above rate is strictly better than the rate $\smash{n^{-2\beta/2\beta+5}}$ derived in \textcite{lisolea2018}, and the two rates coincide when $\beta>1/2$.  In addition to slightly improving the rate of convergence for poorly conditioned $\bR_{0}$ corresponding to $\beta < 1/2$, this implies that the apparent phase transition at $\beta = 1/2$ observed in the rates of \textcite{lisolea2018} is an artefact of their analysis. The only transition we observe in the convergence is at $\beta = 1$ as for $\beta > 1$, the rate is same as that for $\beta = 1$ which is $n^{-1/4}$. However, the dependence on $\|\bK\|$ does change, as observed in Theorem \ref{thm:error_corr}.
\end{remark}

\medskip
\noindent
Turning to the precision operator matrix, recall that for $\bP := \bR^{-1}$ to be well defined at all, we need $\bR$ to be strictly positive. The following assumption is only slightly stronger, and is the non-compact counterpart of the familiar assumption that eigenvalues are separated from zero:
\begin{assumption}\label{asm:eigenvalues}
    The spectrum of $\bR_{0}$ satisfies $r = 1 + \inf \sigma(\bR_{0}) > 0$.
\end{assumption}
Under Assumption \ref{asm:eigenvalues}, $\bR$ is strictly positive. The operator $\hbR$ is also strictly positive for all sufficiently large $n$,  by virtue of being consistent. Hence, for all sufficiently large $n$, we may write
\begin{equation}\label{eqn:prec_error}
    \hbP - \bP 
     = \hbR^{-1}\bR\bR^{-1} - \hbR^{-1}\hbR\bR^{-1} 
    = \hbR^{-1}\left[\bR - \hbR\right]\bR^{-1}=\hbP\left[\bR - \hbR\right]\bP.
\end{equation}
Since $\hbP$ is a random quantity, bounding $\|\hbP-\bP\|$ using (\ref{eqn:prec_error}) requires us to find a bound for $\|\hbR-\bR\|$, as well as $\|\hbP\|$. It was shown in \textcite{lisolea2018}, that $\|\hbP\|$ is bounded in probability under Assumption \ref{asm:eigenvalues}. As a result, the convergence rates for $\|\hbR-\bR\|$ also apply to $\|\hbP-\bP\|$. 

\begin{corollary}[Rate of Convergence for $\hbP$ and Consistency]\label{thm:estmn_prec}
    Under the Assumptions \ref{asm:regularity}, \ref{asm:eigenvalues}, and the optimal choice of the regularization parameter $\kappa_{n}$, we have
    \begin{equation*}
        \|\hbP-\bP\| = \|\bP\| (\|\bR\| \vee \|\Phi_{0}\|\|\bK\|^{2\beta - \beta \wedge 1}) \cdot \cO_{\bbP}(\delta_{n}^{\frac{\beta \wedge 1}{1 + \beta \wedge 1}}).
    \end{equation*}
    If we choose $\rho_{n}$ such that $\rho_{n}/\delta_{n}^{\beta \wedge 1/(1 + \beta \wedge 1)} \to 0$, then $\bbP[\hat{\Omega}^{\pi}(\rho_{n}) \neq \hat{\Omega}_{X}^{\pi}] \to 0$ as $n \to \infty$.
\end{corollary}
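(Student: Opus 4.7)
The plan splits naturally into two parts, matching the two claims in the corollary.

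\textbf{First claim (rate for $\hbP$).} I would begin from the identity (\ref{eqn:prec_error}) displayed just before the corollary, namely
\[ \hbP - \bP = \hbP\bigl[\bR - \hbR\bigr]\bP, \]
valid for $n$ large enough that $\hbR$ is invertible. Taking operator norms gives
\[ \|\hbP - \bP\| \leq \|\hbP\| \cdot \|\hbR - \bR\| \cdot \|\bP\|. \]
Assumption \ref{asm:eigenvalues} yields $\inf\sigma(\bR) = 1 + \inf\sigma(\bR_0) = r > 0$, so $\|\bP\| \leq 1/r$. The paper already recalls (citing \textcite{lisolea2018}) that $\|\hbP\| = \cO_{\bbP}(1)$; one way to see this directly is that by perturbation of the spectrum (Weyl-type), $\inf\sigma(\hbR) \geq r - \|\hbR - \bR\|$, which exceeds $r/2$ with probability tending to one by the consistency of $\hbR$, so that $\|\hbP\| \leq 2/r$ on this high-probability event. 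Substituting the rate from Theorem \ref{thm:error_corr} (at the optimal choice of $\kappa_n$) for $\|\hbR - \bR\|$ into the displayed inequality yields the claimed bound, with the constant $\|\bP\|(\|\bR\| \vee \|\Phi_0\|\|\bK\|^{2\beta - \beta\wedge 1})$ emerging from combining the prefactor $\|\bP\|$ here with the prefactor of Theorem \ref{thm:error_corr}.

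\textbf{Second claim (graph recovery).} By Theorem \ref{thm:best_piapprx}, the target $\tilde\Omega_X^\pi$ coincides with $\cup\{U_i \times U_j : \|\bP_{ij}\| \neq 0\}$. Set $\gamma := \min\{\|\bP_{ij}\| : \|\bP_{ij}\| > 0\}$, which is strictly positive because the partition $\pi$ is finite. A given block $(i,j)$ is correctly classified by $\hat\Omega^\pi(\rho_n)$ whenever
\[ \bigl|\,\|\hbP_{ij}\| - \|\bP_{ij}\|\,\bigr| < \min\bigl(\rho_n,\; \gamma - \rho_n\bigr), \]
since then $\|\hbP_{ij}\|$ lies on the same side of $\rho_n$ as $\|\bP_{ij}\|$. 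The reverse triangle inequality bounds the left-hand side by $\|\hbP_{ij} - \bP_{ij}\| \leq \|\hbP - \bP\|$, so a union bound over the finitely many blocks controls
\[ \bbP\bigl[\hat\Omega^\pi(\rho_n) \neq \tilde\Omega_X^\pi\bigr] \leq \bbP\bigl[\|\hbP - \bP\| \geq \min(\rho_n, \gamma - \rho_n)\bigr]. \]
Taking $\rho_n \to 0$ together with $\rho_n$ of strictly larger order than the rate $\delta_n^{\beta\wedge 1/(1+\beta\wedge 1)}$ of the first claim makes the right-hand side tend to zero, since eventually $\gamma - \rho_n > \gamma/2$ and $\rho_n$ dominates $\|\hbP - \bP\|$ in probability.

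\textbf{Main obstacle.} The only genuinely nontrivial step is the high-probability boundedness of $\|\hbP\|$, since $\hbP$ is random and involves inverting an estimated operator; everything else is an application of the factorization identity and Theorem \ref{thm:error_corr}. Fortunately, Assumption \ref{asm:eigenvalues} is precisely what makes the Weyl-type perturbation argument go through. The thresholding step is a standard finite-partition argument whose success crucially relies on $\gamma > 0$---a property one loses if the resolution is driven to infinity without additional assumptions, which is why the continuum-limit consistency is deferred to the later Section \ref{sec:consistency}.
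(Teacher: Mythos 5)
Your proof is correct and follows essentially the same route as the paper: the factorization $\hbP-\bP=\hbP[\bR-\hbR]\bP$ of (\ref{eqn:prec_error}), a high-probability bound on $\|\hbP\|$ (the paper cites \textcite{lisolea2018} for this, but your direct perturbation argument $\inf\sigma(\hbR)\geq r-\|\hbR-\bR\|\geq r/2$ is exactly the one the paper itself deploys in the proof of Theorem \ref{thm:conc_ineq}(2)), and then transfer of the rate from Theorem \ref{thm:error_corr}, followed by a finite union bound over blocks for the thresholding step. One remark: for the second claim you require $\rho_n$ to \emph{dominate} the rate $\delta_n^{\beta\wedge 1/(1+\beta\wedge 1)}$, whereas the corollary as printed asks for $\rho_n/\delta_n^{\beta\wedge 1/(1+\beta\wedge 1)}\to 0$, i.e.\ the reverse; your version is the one under which the argument actually closes (a threshold of smaller order than the noise cannot screen out the zero blocks, for which $\|\hbP_{ij}\|$ is only $\cO_{\bbP}$ of the rate and not $o_{\bbP}$ of it), and it is consistent with the condition $n\rho_n^{2+2/(\beta\wedge 1)}\to\infty$ stated after Theorem \ref{thm:consistency}, so the printed ratio appears to be a typo that you have silently and correctly repaired.
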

\noindent In order to derive concentration bounds for the error $\|\hbP-\bP\|$ or quantitative bounds on the familywise error rate $\bbP[\hat{\Omega}^{\pi}(\rho) \neq \hat{\Omega}_{X}^{\pi}]$, we need to be more specific about the observation regime and the choice of covariance estimator $\hbK$. This is done in the next section. \\

\begin{remark}
It is worth mentioning that our assumptions are rather minimal. It is well known in the inverse problem literature that the rate of convergence of the solution of a linear inverse problem can be arbitrarily slow in the absence of regularity such as that offered by Assumption \ref{asm:regularity}. On the other hand, Assumption \ref{asm:eigenvalues} is necessary if we are to connect the empirical covariance with the graph of the process via Theorem \ref{thm:best_piapprx}.  Though it has occasionally been claimed in the literature that $\bR$ always admits a eigenvalue gap (i.e. that $\bR \geq c\bI$ for some $c > 0$), this is not true as the following simple counterexample illustrates: take $\bK = [\bK_{ij}]_{i,j=1}^{2}$ to be given by $\bK_{11} = \bK_{22} = \sum_{j} \lambda_{j} e_{j} \otimes e_{j}$ and $\bK_{12} = \bK_{21} = -\lambda_{1} e_{1} \otimes e_{1}$. Then $\bR = [\bR_{ij}]_{i,j=1}^{2}$ given by $\bR_{11} = \bR_{22} = \bI$ and $\bR_{12} = \bR_{21} = -e_{1} \otimes e_{1}$ is not invertible since $\bR[e_{1} ~e_{1}] = \bzero$. The same counterexample shows that invertibility of $\bR$ itself  cannot be secured by requiring $\mathrm{ Ker }~ \bK_{jj} = \{\bzero \}$.
\end{remark}


\subsection{Finite Sample Guarantees}\label{sec:finite_sample}

In this section, we derive finite sample guarantees for our procedure. We consider independent samples under  complete, regular and sparse observation regimes and derive concentration bounds for appropriate choices of covariance estimators and the corresponding estimators of the correlation and precision operators along with bounds on the familywise error rate.

The complete observations regime is an ideal setting convenient for proving theoretical results, where every sample path is observed noiselessly over the entire domain. For this regime, we consider the empirical covariance estimator. In practice, functional data are observed discretely, usually over dense regular grids but sometimes in the form of sparse measurements between irregularly-sized intervals. In either case, the measurements may be corrupted by noise. These two situations correspond to the regular and sparse observation regimes, respectively. 

To make things concrete, consider $n$ independent samples $\{X_{j}\}_{j=1}^{n}$ of $X$. In the complete observation regime, we are given the observations $\{X_{k}(T): T \in I\}$ for every sample $X_{k}$ on the interval $I=[0,1]$. In contrast, the standard setting in functional data analysis is the discrete observation regime, where we are given the observations $\{(Y_{ki}, T_{ki}): 1 \leq i \leq i_{k}\}$ for every sample $X_{k}$, which satisfy 
\begin{equation*}
    Y_{ki} = X_{k}(T_{ki}) + \xi_{ki}.
\end{equation*}
We assume, in addition, that the noise is independent and sub-Gaussian, that is, $\xi_{ki}$ are independent random variables satisfying $\bbE[\xi_{ki}] = 0$ and $\|\xi_{ki}\|_{\psi_{2}} < \infty$. In the regular observation regime, $i_{k} = M+1$ for some $M \geq 1$ and $T_{ki} = T_{i}$ for $1 \leq k \leq n$, where $\cG = \{T_{i}\}_{i=1}^{M+1}$ forms a regular grid on $I$ with $T_{1} = 0$ and $T_{M+1} = 1$ being the endpoints. In a sparse observation regime, $\{T_{ki}: 1 \leq i \leq i_{k}\}_{k=1}^{n}$ are independent random variables distributed on $I$. 

Covariance estimation for these discrete observation settings is a well-studied problem in functional data analysis and many different methods for estimation have been devised in the literature (e.g. \textcite{fdapace}). 
Although one can readily establish asymptotic rates of convergence for our plug-in procedure when used in conjunction with these estimation methods, as demonstrated in Section \ref{sec:estimation}, deriving finite sample bounds for the same procedure is complicated by the fact that these methods rely on smoothing. 
To circumvent this problem, we propose two simple estimators based on local averaging---one for the regular observation setting and another for a slightly simplified version of the sparse irregular observation setting, where $\{T_{ki}\}$ are uniformly distributed on $I$---and derive finite sample guarantees for these estimators.


\subsubsection{Complete Observations}
\label{sec:complete_obs}

Consider the ideal setting of complete independent observations where every $X_{j}$ is observed over the entire domain $I$. Under certain regularity conditions, the stochastic process $X$ can be thought of as a Gaussian random element in $L^{2}(U, \mu)$. For example, if $X$ is Gaussian with a continuous covariance, it can be thought of as a Gaussian random element in $L^{2}(U, \mu)$. One can then derive the following concentration bound for the empirical covariance operator in the operator norm (see \textcite[Theorem 9]{koltchinskii2017}).
\begin{theorem}\label{thm:large_dev}
    Let $X$ be a sub-Gaussian random element in a Hilbert space,  
    with mean zero and covariance operator $\bK$. Let $X_{1}, \dots, X_{n}$ be i.i.d. replications of $X$. Define the empirical covariance operator $\hbK = \frac{1}{n}\sum_{j=1}^{n} X_{j} \otimes X_{j}$. For every $0 < t \leq \|\bK\|$, 
    \begin{equation*}
        \bbP \{ \|\hbK - \bK\| \geq t\} \leq e^{-cnt^{2}/\|\bK\|^{2}}
    \end{equation*}
    for $n \geq (1 \vee \br(\bK))\|\bK\|^{2}/t^{2}$ where $\br(\bK) = (\bbE \| X \|)^{2} / \|\bK\|$ and $c$ is a universal constant. 
\end{theorem}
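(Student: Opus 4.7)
The plan is to reduce the operator-norm deviation to a supremum of a centered empirical process over the unit sphere and then to apply two classical ingredients: a sharp bound on the expected deviation involving the effective rank $\br(\bK)$, and a Talagrand-type concentration bound around that expectation. Concretely, using that $\hbK-\bK$ is self-adjoint we first write
\begin{equation*}
    \|\hbK-\bK\| = \sup_{\|u\|\leq 1}\left|\frac{1}{n}\sum_{j=1}^{n}\bigl(\langle X_j,u\rangle^{2}-\bbE\langle X,u\rangle^{2}\bigr)\right|,
\end{equation*}
so the task is to control a centered quadratic empirical process indexed by the unit ball. Sub-Gaussianity of $X$ gives that each $\langle X_j,u\rangle^{2}-\bbE\langle X,u\rangle^{2}$ is sub-exponential with orlicz-norm comparable to $\bbE\langle X,u\rangle^{2}\leq \|\bK\|$, which is the basic ingredient all subsequent estimates rest upon.

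Next I would bound the expectation $\bbE\|\hbK-\bK\|$. By a standard symmetrization step this reduces to a Rademacher average of the same quadratic form, to which one applies either generic chaining with respect to the metric $d(u,v)=\|\langle X,u\rangle^{2}-\langle X,v\rangle^{2}\|_{\psi_{1}}$, or the Gine-Zinn contraction-plus-Gaussian comparison trick that replaces the quadratic process by its linearized version $\sup_{\|u\|\leq 1}|\frac{1}{\sqrt n}\sum_j \varepsilon_j\langle X_j,u\rangle^{2}|$. In either case the intrinsic ``dimension'' that enters is the trace-type quantity $\bbE\|X\|^{2}=\mathrm{tr}\bK$, and together with the spectral bound $\|\bK\|$ this yields the two-regime estimate
\begin{equation*}
    \bbE\|\hbK-\bK\|\;\lesssim\;\|\bK\|\left(\sqrt{\frac{\br(\bK)}{n}}\vee\frac{\br(\bK)}{n}\right).
\end{equation*}
Under the hypothesis $n\geq (1\vee\br(\bK))\|\bK\|^{2}/t^{2}$ with $t\leq\|\bK\|$, this expectation is at most a small constant multiple of $t$.

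For the deviation from the mean I would invoke a Talagrand-type (or Adamczak-type) concentration inequality for suprema of empirical processes of sub-exponential functions. Because each summand is bounded in $\psi_{1}$-norm by a multiple of $\|\bK\|$, the Bernstein regime gives, for all $s>0$,
\begin{equation*}
    \bbP\bigl\{\|\hbK-\bK\|\geq \bbE\|\hbK-\bK\|+s\bigr\}\;\leq\; \exp\!\left(-c\,n\min\!\left\{\frac{s^{2}}{\|\bK\|^{2}},\ \frac{s}{\|\bK\|}\right\}\right).
\end{equation*}
Choosing $s$ of order $t$ and using $t\leq\|\bK\|$ places us in the sub-Gaussian branch, so the minimum equals $s^{2}/\|\bK\|^{2}$. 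Combining this with the expectation bound from the previous paragraph, which swallows $\bbE\|\hbK-\bK\|$ into $t/2$ under the stated sample-size hypothesis, yields exactly the claimed $\exp(-cnt^{2}/\|\bK\|^{2})$ tail.

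The main obstacle is the expectation bound: controlling $\bbE\|\hbK-\bK\|$ with the correct dependence on the effective rank $\br(\bK)$ rather than on an ambient (possibly infinite) dimension. This is where the sub-Gaussian hypothesis on $X$ is essential, since it allows one to pass from Rademacher complexities to Gaussian ones via contraction and then to exploit Slepian/Sudakov-type comparison to pick up $\mathrm{tr}\bK/n$ in the low-$t$ regime. The concentration step is comparatively mechanical once the correct Bernstein-type inequality for bounded $\psi_{1}$ summands is in hand; the delicate point is making sure that the threshold $n\geq\br(\bK)\|\bK\|^{2}/t^{2}$ is precisely what is needed to balance bias (expectation) against variance (concentration) so that both contribute at the same order $t$.
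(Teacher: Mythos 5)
Your proposal reconstructs from scratch a result that the paper does not actually prove: its ``proof'' of Theorem \ref{thm:large_dev} is a one-line citation to Theorem 9 of Koltchinskii--Lounici together with the reparametrization $t \mapsto nt^{2}/\|\bK\|^{2}$ of the confidence level. Your architecture --- the variational formula $\|\hbK-\bK\| = \sup_{\|u\|\leq 1}|\tfrac1n\sum_j(\langle X_j,u\rangle^2-\bbE\langle X,u\rangle^2)|$, an effective-rank bound on $\bbE\|\hbK-\bK\|$, and concentration about the mean --- is exactly the architecture of the cited theorem's proof, and your bookkeeping showing that $n \geq (1\vee \br(\bK))\|\bK\|^{2}/t^{2}$ and $t\leq\|\bK\|$ make both the expectation and the deviation of order $t$ is correct.

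There is, however, a genuine gap in the concentration step. Talagrand's inequality requires a uniformly bounded function class, and here the envelope $\sup_{\|u\|\leq 1}|\langle X_i,u\rangle^{2}-\bbE\langle X,u\rangle^{2}|$ is of order $\|X_i\|^{2}$, which is unbounded. The Adamczak substitute for $\psi_1$ envelopes does not give the two-branch Bernstein bound you wrote with $\|\bK\|$ in both branches: it produces an additional tail term of the form $\exp\bigl(-cns/\|\max_i \|X_i\|^{2}\|_{\psi_1}\bigr)$, and $\|\max_i\|X_i\|^{2}\|_{\psi_1}\lesssim \bbE\|X\|^{2}\log n \asymp \br(\bK)\|\bK\|\log n$, not $\|\bK\|$. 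For $s$ of order $\|\bK\|$ (which the theorem must cover, since $t$ ranges up to $\|\bK\|$) and nontrivial effective rank, this extra term is not dominated by $\exp(-cns^{2}/\|\bK\|^{2})$; the route you describe would only prove the statement for $t\lesssim \|\bK\|/(\br(\bK)\log n)$. Closing exactly this gap is the delicate part of Koltchinskii--Lounici's argument, which in the Gaussian case proceeds by Gaussian isoperimetry applied to a localized, Lipschitz modification of the process rather than by an off-the-shelf empirical-process concentration inequality, and in the sub-Gaussian case requires a dedicated chaining tail bound for quadratic (second-order chaos) processes. As written, the step ``invoke Talagrand or Adamczak because each summand has $\psi_1$-norm $\lesssim\|\bK\|$'' conflates the pointwise $\psi_1$-norm of $f_u(X_i)$ for fixed $u$ with the $\psi_1$-norm of the envelope over all $u$, and would fail.
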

\smallskip
Using our earlier results, we can now derive concentration bounds for $\|\hbR-\bR\|$ and$\|\hbP-\bP\|$ and a tail bound for $\hbP$, which will eventually enable us to prove the consistency of our graph recovery procedure:
\begin{theorem}[Concentration and Tail Bounds]\label{thm:conc_ineq}
    Let $X$ be a stochastic process on the set $U$ corresponding to a sub-Gaussian random element in the Hilbert space $L^{2}(U, \mu)$ with the covariance operator $\bK$. Let $c_{K}$ be the universal constant $c$ appearing in Theorem \ref{thm:large_dev}, $\rho_{K} = \|\bK\|$, $n_{K} = [1 \vee \br(\bK)] \|\bK\|^{2}$,
    \begin{equation*}
        M_{R} = 10 \cdot \left[\|\bR\| \vee \|\Phi_{0}\|\|\bK\|^{2\beta - \beta \wedge 1}\right] \mbox{ and } r = \inf_{j} \left[1 + \lambda_{j}(\bR_{0})\right] = \|\bP\|^{-1}.
    \end{equation*}  
    Define $c_{R}^{} = c_{K}^{}M_{R}^{2 + 2/\beta \wedge 1}$, $\rho_{R}^{} = M_{R}^{}\rho_{K}^{\beta \wedge 1/(\beta \wedge 1 + 1)}$, $n_{R}^{} = n_{K}^{} M_{R}^{2 + 2/\beta \wedge 1}$ and $c_{P} = c_{R}(r^{2}/2)^{2 + 2/\beta \wedge 1}$.
    \begin{enumerate}
        \item Under Assumption \ref{asm:regularity}, we have 
        \begin{equation}\label{eqn:conc_ineq_corr}
            \bbP[\|\hbR-\bR\| > \rho] \leq \exp \left\lbrace-c_{R}n\rho^{2+2/\beta \wedge 1}\right\rbrace
        \end{equation}
        for $0 < \rho < \rho_{R}$ and $n > n_{R}/\rho^{2+2/\beta \wedge 1}$.
        \item Under Assumptions \ref{asm:regularity} and \ref{asm:eigenvalues}, we have 
        \begin{equation}\label{eqn:tail_bound}
            \bbP[\|\hbP\| > (r - \rho)^{-1}] \leq \exp \left\lbrace-c_{R}n\rho^{2+2/\beta \wedge 1}\right\rbrace
        \end{equation}
        for $0 < \rho < r\wedge\rho_{R}$ and $n > n_{R}/\rho^{2+2/\beta \wedge 1}$.        

        \item Under Assumptions \ref{asm:regularity} and \ref{asm:eigenvalues}, we have
        \begin{equation}\label{eqn:conc_ineq_prec}
            \bbP[\|\hbP-\bP\| > \rho] \leq 2\cdot \exp \left\lbrace-c_{P}n\rho^{2+2/\beta \wedge 1}\right\rbrace
        \end{equation}
        for $0 < \rho < (r/2)\wedge\rho_{R}$ and $n > n_{R}/\rho^{2+2/\beta \wedge 1}$.
       \end{enumerate}
\end{theorem}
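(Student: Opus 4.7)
The plan is to treat the three parts in sequence, propagating control over $\|\hbK-\bK\|$ through to $\|\hbR-\bR\|$, then to $\|\hbP\|$, and finally to $\|\hbP-\bP\|$. The only probabilistic input is Theorem \ref{thm:large_dev}; everything else is deterministic bookkeeping using the bounds already derived.

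For part (1), I would invert the conclusion of Theorem \ref{thm:error_corr} under the optimal choice of $\kappa_n$ to express the required smallness of $\delta_n$ in terms of the desired $\rho$. Concretely, if $\|\hbK-\bK\| \leq \delta$ and $\kappa = \delta^{1/(1+\beta\wedge 1)}$, Theorem \ref{thm:error_corr} yields $\|\hbR-\bR\| \leq M_R\,\delta^{(\beta\wedge 1)/(1+\beta\wedge 1)}$. Setting this $\leq \rho$ requires $\delta \leq (\rho/M_R)^{(1+\beta\wedge 1)/(\beta\wedge 1)}$, and the restriction $\rho < \rho_R$ is exactly what guarantees this threshold is compatible with the regime $\delta \leq \|\bK\|$ demanded by Theorem \ref{thm:large_dev}. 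Applying that theorem with $t = (\rho/M_R)^{(1+\beta\wedge 1)/(\beta\wedge 1)}$ produces an exponent of the form $-c_K n \rho^{2+2/(\beta\wedge 1)}/\bigl(M_R^{2+2/(\beta\wedge 1)}\|\bK\|^{2}\bigr)$; absorbing $\|\bK\|^2$ into $M_R^{2+2/(\beta\wedge 1)}$ and into the lower bound on $n$ gives precisely \eqref{eqn:conc_ineq_corr} with the stated $c_R$ and $n_R$.

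For part (2), I would use the Banach/Neumann-series argument: on the event $\{\|\hbR-\bR\| \leq \rho\}$ with $\rho < r$, we have $\hbR = \bR(\bI + \bR^{-1}(\hbR-\bR))$, and since $\|\bR^{-1}(\hbR-\bR)\| \leq \rho/r < 1$, the operator $\hbR$ is invertible with
\begin{equation*}
\|\hbP\| = \|\hbR^{-1}\| \leq \frac{\|\bP\|}{1 - \rho/r} = \frac{1}{r-\rho}.
\end{equation*}
Taking complements and invoking part (1) immediately yields \eqref{eqn:tail_bound}. Part (3) then follows from the resolvent identity $\hbP - \bP = \hbP(\bR - \hbR)\bP$ displayed in (\ref{eqn:prec_error}), which gives $\|\hbP-\bP\| \leq \|\hbP\|\,\|\hbR-\bR\|\,\|\bP\|$. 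On the event $\{\|\hbR-\bR\| \leq r/2\}$, part (2) gives $\|\hbP\| \leq 2/r$, hence $\|\hbP-\bP\| \leq (2/r^2)\|\hbR-\bR\|$. Thus $\{\|\hbP-\bP\|>\rho\}$ is contained in $\{\|\hbR-\bR\| > r^2\rho/2\} \cup \{\|\hbR-\bR\| > r/2\}$, and a union bound together with part (1) applied to each event (the second being dominated by the first when $\rho < r/2$) produces the factor of $2$ and the rescaled constants $c_P = c_R(r^2/2)^{2+2/(\beta\wedge 1)}$ appearing in \eqref{eqn:conc_ineq_prec}.

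The main obstacle I anticipate is purely bookkeeping: verifying that the admissibility windows for $\rho$ (below $\rho_R$, below $r$, below $r/2$) and the sample-size thresholds propagate correctly through each substitution, and that the constants $M_R$, $c_R$, $n_R$, $c_P$ are chosen so that a single clean exponential bound emerges at each step without sub-optimal dependence on $\|\bK\|$ or $\|\bR\|$. There is no conceptual difficulty beyond Theorems \ref{thm:error_corr} and \ref{thm:large_dev}; the only care needed is in the algebra relating the exponent $2+2/(\beta\wedge 1)$ in the tail to the exponent $(\beta\wedge 1)/(1+\beta\wedge 1)$ in the rate of convergence.
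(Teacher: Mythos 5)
Your proposal is correct and follows essentially the same route as the paper: part (1) inverts Theorem \ref{thm:error_corr} to reduce the event to one on $\|\hbK-\bK\|$ and applies Theorem \ref{thm:large_dev}; part (3) combines the identity $\hbP-\bP=\hbP[\bR-\hbR]\bP$ with a union bound over the two events on $\|\hbR-\bR\|$, yielding the same constant $c_{P}=c_{R}(r^{2}/2)^{2+2/\beta\wedge 1}$. The only (immaterial) difference is in part (2), where you establish $\|\hbP\|\leq(r-\rho)^{-1}$ via a Neumann series around $\bR$, whereas the paper lower-bounds the quadratic form $\langle f,[\hbR-(r-\rho)\bI]f\rangle$ and invokes the spectral mapping theorem; both give the identical bound on the same event.
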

Note that the parameters $\rho_{K}$ and $n_{K}$ depend only on the covariance kernel $K$, whereas the parameters $c_{R}$, $c_{P}$, $\rho_{R}$, $M_{R}$, $r$ and $n_{R}$ depend only on $K$ and $\pi$.
\smallskip
We can now have the tools to establish sufficient conditions for the estimator $\smash{\hat{\Omega}^{\pi}}$ of $\smash{\tilde{\Omega}_{X}^{\pi}}$ to be consistent.
\begin{theorem}[Consistency at Given Resolution]\label{thm:consistency}
    Let $X$ be a Gaussian process on $U$ with continuous covariance kernel $K$, corresponding to a (Gaussian) random element in the Hilbert space $L^{2}(U, \mu)$. Let $\{X_{k}\}_{k = 1}^{n}$ be $n$ independent copies of $X$ and $\pi$ be a partition on $U$. Under Assmptions \ref{asm:regularity} and \ref{asm:eigenvalues}, we have for $0 < \rho < \frac{1}{2}r \wedge \rho_{R} \wedge \rho_{P}$ and $n > n_{R}/\rho^{2 + 2/\beta \wedge 1}$, 
    \begin{equation*}
        \bbP[\hat{\Omega}^{\pi} \neq \tilde{\Omega}_{X}^{\pi}] \leq 2p^{2} \cdot \exp \left[ -c_{P} n \rho^{2+2/(\beta\wedge 1)} \right] \to 0 \mbox{ as } n \to \infty
    \end{equation*}    
    where $p$ is the cardinality of $\pi$, $\rho_{P} = \frac{1}{2}\min \{\|\bP_{ij}\|: \bP_{ij} \neq \bzero \}$, and the parameters $\rho_{R}, n_{R}$ and $c_{P}$ are as in Theorem \ref{thm:conc_ineq} and depend only on $K$ and $\pi$. 
\end{theorem}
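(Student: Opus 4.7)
The plan is to reduce the event of a graph recovery error to a large deviation of the estimated precision operator matrix in operator norm, and then invoke the concentration bound of Theorem~\ref{thm:conc_ineq}(3) together with a union bound over the $p^2$ pixels of $\pi$. Throughout, I will work under the tacit equality regime of Theorem~\ref{thm:best_piapprx}, so that $\tilde{\Omega}_X^\pi$ can be identified with the union of pixels $U_i\times U_j$ for which $\|\bP_{ij}\|\neq 0$. This means that the estimator $\hat{\Omega}^\pi$ and the target $\tilde{\Omega}_X^\pi$ can both be described purely in terms of the nonzero/large-norm blocks of $\hbP$ and $\bP$ respectively.

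First, I would decompose the discrepancy pixelwise: the event $\{\hat{\Omega}^\pi(\rho) \neq \tilde{\Omega}_X^\pi\}$ is contained in the union over $1\leq i,j\leq p$ of the events
\begin{equation*}
E_{ij} \;=\; \bigl\{\mathbf{1}\{\|\hbP_{ij}\|>\rho\} \neq \mathbf{1}\{\bP_{ij}\neq \bzero\}\bigr\}.
\end{equation*}
Each $E_{ij}$ splits into a false positive and a false negative case. On a false positive, $\bP_{ij}=\bzero$ but $\|\hbP_{ij}\|>\rho$, so immediately $\|\hbP_{ij}-\bP_{ij}\|>\rho$. On a false negative, $\bP_{ij}\neq\bzero$ and $\|\hbP_{ij}\|\leq\rho$; since $\rho<\rho_P=\tfrac{1}{2}\min\{\|\bP_{ij}\|:\bP_{ij}\neq\bzero\}$, we have $\|\bP_{ij}\|\geq 2\rho_P>2\rho$, hence by the reverse triangle inequality
\begin{equation*}
\|\hbP_{ij}-\bP_{ij}\| \;\geq\; \|\bP_{ij}\|-\|\hbP_{ij}\| \;>\; 2\rho-\rho \;=\; \rho.
\end{equation*}
Either way, the occurrence of $E_{ij}$ forces $\|\hbP_{ij}-\bP_{ij}\|>\rho$.

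Next, since $\hbP_{ij}-\bP_{ij}$ is the $(i,j)$ block of the operator matrix $\hbP-\bP$, its operator norm is bounded by $\|\hbP-\bP\|$. Applying the union bound over the $p^2$ pixels gives
\begin{equation*}
\bbP[\hat{\Omega}^\pi(\rho)\neq \tilde{\Omega}_X^\pi] \;\leq\; \sum_{i,j=1}^{p} \bbP\bigl[\|\hbP_{ij}-\bP_{ij}\|>\rho\bigr] \;\leq\; p^2\cdot \bbP\bigl[\|\hbP-\bP\|>\rho\bigr].
\end{equation*}
The stated range $0<\rho<\tfrac{1}{2}r\wedge\rho_R\wedge\rho_P$ ensures in particular that $\rho<(r/2)\wedge\rho_R$, and the sample size condition $n>n_R/\rho^{2+2/(\beta\wedge 1)}$ matches the hypothesis of Theorem~\ref{thm:conc_ineq}(3). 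Plugging in the concentration bound \eqref{eqn:conc_ineq_prec} yields
\begin{equation*}
\bbP[\hat{\Omega}^\pi(\rho)\neq \tilde{\Omega}_X^\pi] \;\leq\; 2p^2 \exp\bigl\{-c_P n\rho^{\,2+2/(\beta\wedge 1)}\bigr\},
\end{equation*}
which is the desired bound; the convergence to zero as $n\to\infty$ is immediate.

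There is no real technical obstacle here: all of the heavy lifting has been done in Theorems~\ref{thm:best_piapprx} and \ref{thm:conc_ineq}. The only care needed is in the false-negative argument, where the threshold $\rho$ must be strictly less than the minimum block norm gap $\rho_P$ in order to turn a below-threshold estimate into a genuine deviation of size exceeding $\rho$; this is precisely the role of the factor $\tfrac{1}{2}$ in the definition of $\rho_P$ as half the minimum signal, and of the condition $\rho<\rho_P$ in the hypothesis.
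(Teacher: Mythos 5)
Your proposal is correct and follows essentially the same route as the paper's own proof: a pixelwise characterization of the error event, the observation that $\rho<\rho_{P}$ forces $\|\hbP_{ij}-\bP_{ij}\|>\rho$ in both the false-positive and false-negative cases, a union bound over the $p^{2}$ blocks with each block norm dominated by $\|\hbP-\bP\|$, and an appeal to Theorem \ref{thm:conc_ineq}(3). The only difference is that you spell out the reverse-triangle-inequality step that the paper leaves implicit.
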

Alternatively, for the probability $\bbP[\hat{\Omega}^{\pi} \neq \tilde{\Omega}_{X}^{\pi}]$ to be less than some $\alpha \in (0, 1)$, we need the sample size $n$ to satisfy
\begin{equation}
    \label{eqn:n-bound}
    n > \frac{1}{c_{P}}[\tfrac{1}{2}r \wedge \rho_{R} \wedge \rho_{P}]^{-2-2/\beta \wedge 1} \log \left[\frac{2p^{2}}{\alpha}\right].
\end{equation}
Notice that even if the thresholding parameter is chosen as a function of the sample size, as in $\rho \equiv \rho(n)$, then the estimator is consistent so long as $n \rho_{n}^{2+2/\beta\wedge 1} \to \infty$ as $n \to \infty$. Regardless, Theorem \ref{thm:consistency} guarantees exact recovery of $\tilde{\Omega}_{X}^{\pi}$ with high probability, so long as the thresholding parameter $\rho$ is fixed to be small enough and the sample size $n$ is large enough. It is in contrast to the asymptotic results of \textcite{lisolea2018} in which the thresholding parameter needs to decrease as the sample size increases for consistent recovery of the graph and we do not know how quickly $\bbP[\hat{\Omega}^{\pi} \neq \tilde{\Omega}_{X}^{\pi}]$ converges to $0$ in terms of the sample size. 

A natural question now is: at how fine a resolution $p$ can we estimate the graph $\Omega_{X}$ reliably from a given sample of size $n$? An upper bound for $p$ for given $n$ is implicit in the inequality (\ref{eqn:n-bound}), considering the fact that $c_{P}$, $r$, $\rho_{R}$ and $\rho_{P}$ are all functions of $p$, or rather, the partition $\pi$, along with the covariance $K$. Due to the complicated nature of this dependence, we are unable to derive a closed form expression for an upper bound on $p$. 

\subsubsection{Regular Observations}
\label{sec:regular_obs}

Divide $I$ into the intervals $I_{l} = [T_{l}, T_{l+1})$ for $1 \leq l < M$ and $I_{M} = [T_{M}, T_{M+1}]$. Consider the matrix $\hat{F} = [\hat{F}_{uv}]_{u,v=1}^{M+1}$, given by 
$$\hat{F}_{uv} = \frac{1}{n}\sum_{k=1}^{n} Y_{ku}Y_{kv} \qquad \qquad \mbox{ for } 1 \leq u,v \leq M+1,$$
which can be regarded as a naive estimator of the covariance $K$ at the grid points $\cG \times \cG$. Define $\hat{K}_{\mathrm{regular}}: I \times I \to \bbR$ as 
\begin{equation*}
    \hat{K}_{\mathrm{regular}}(s, t) = \frac{\sum_{i,j=0,1} (1-\delta_{u+i,v+j})\hat{F}_{u+i,v+j}}{\sum_{i,j=0,1} (1 - \delta_{u+i,v+j})} \qquad \mbox{ for } (s, t) \in I_{u} \times I_{v} \mbox{ and } 1 \leq u,v \leq M.
\end{equation*}
In other words, $\hat{K}_{\mathrm{regular}}(s, t)$ is the average of those entries among $\hat{F}_{uv}$, $\hat{F}_{u+1,v}$, $\hat{F}_{u,v+1}$ and $\hat{F}_{u+1,v+1}$, which are off-diagonal in $\hat{F}$.

\begin{theorem}\label{thm:reg-obs-estmtr}
    Suppose $X = \{X_{u}\}_{u \in I}$ is a zero-mean second-order stochastic process with a continuous covariance $K: I \times I \to \bbR$, differentiable outside the diagonal, which satisfies $\sD = \sup_{u \neq v} \left|\frac{\partial}{\partial u}K(u, v)\right| < \infty$. Assume furthermore that $\kappa_{Y} = \sup_{u} \|X(u)\|_{\psi_{2}} + \max_{k,i}\|\xi_{ki}\|_{\psi_{2}} < \infty$.
    
    The estimator $\hat{\bK}_{\mathrm{regular}}$ defined as the integral operator with the kernel $\hat{K}_{\mathrm{regular}}$ satisfies for every $0 \leq t \leq \kappa_{Y}^{2}$,
    \begin{equation*}
        \bbP\{ \|\hat{\bK}_{\mathrm{regular}} - \bK\| \geq t + \tfrac{2}{M}\sD\} \leq 4M^{2} \exp\left[-\frac{cnt^{2}}{\kappa_{Y}^{4}}\right],
    \end{equation*}
    where $c$ is an absolute constant.
\end{theorem}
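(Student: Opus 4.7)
The plan is a bias-variance decomposition in operator norm. Define the ``population'' step kernel $\bar{K}_{\mathrm{regular}}(s,t) := \bbE[\hat{K}_{\mathrm{regular}}(s,t)]$; since the noise is mean zero and independent of $X$, the entries $\bbE[\hat{F}_{u'v'}] = K(T_{u'}, T_{v'})$ for $u' \neq v'$ are the only ones entering the local averages, so $\bar{K}_{\mathrm{regular}}$ is obtained by substituting $K$ for $\hat{F}$ in the definition of $\hat{K}_{\mathrm{regular}}$. By the triangle inequality,
\[
\|\hat{\bK}_{\mathrm{regular}} - \bK\| \;\leq\; \|\hat{\bK}_{\mathrm{regular}} - \bar{\bK}_{\mathrm{regular}}\| \;+\; \|\bar{\bK}_{\mathrm{regular}} - \bK\|,
\]
and I aim to control the first term by $t$ with the claimed probability and the second by $2\sD/M$ deterministically.

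\textbf{Bias term.} Using symmetry $K(u,v)=K(v,u)$, the hypothesis gives $|\partial_u K|,|\partial_v K| \leq \sD$ on $\{u \neq v\}$, so $K$ is $\sD$-Lipschitz in each coordinate there; continuity of $K$ extends this Lipschitz bound to all of $I\times I$. Each corner $(T_{u+i}, T_{v+j})$ of a pixel lies at coordinate-wise distance at most $1/M$ from any $(s,t)$ in that pixel, so $|K(T_{u+i},T_{v+j}) - K(s,t)| \leq 2\sD/M$, a bound preserved under averaging. Since $|I| = 1$, the operator norm is dominated by the kernel sup-norm, giving $\|\bar{\bK}_{\mathrm{regular}} - \bK\| \leq 2\sD/M$.

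\textbf{Variance term.} The kernel of $\hat{\bK}_{\mathrm{regular}} - \bar{\bK}_{\mathrm{regular}}$ is a step function $\sum_{u,v} a_{uv}\mathbf{1}_{I_u \times I_v}$, where each $a_{uv}$ is an average of deviations $\hat{F}_{u'v'} - K(T_{u'}, T_{v'})$ over the off-diagonal indices in the corresponding $2\times 2$ block. A short calculation in the orthonormal basis $\{\sqrt{M}\mathbf{1}_{I_u}\}_{u=1}^{M}$ shows that the operator norm of such a step-function integral operator equals $\sigma_{\max}(A)/M$, where $A=(a_{uv})$ is $M\times M$. Since $\sigma_{\max}(A) \leq M \max_{u,v}|a_{uv}|$, the $1/M$ factor cancels and
\[
\|\hat{\bK}_{\mathrm{regular}} - \bar{\bK}_{\mathrm{regular}}\| \;\leq\; \max_{u'\neq v'} \big|\hat{F}_{u'v'} - K(T_{u'}, T_{v'})\big|.
\]
Each $Y_{ku'}Y_{kv'}$ is a product of sub-Gaussian variables, hence sub-exponential with norm $\lesssim \kappa_Y^2$. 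Bernstein's inequality gives a single-entry tail $\bbP\big[|\hat{F}_{u'v'} - K(T_{u'}, T_{v'})| > t\big] \leq 2\exp(-cnt^2/\kappa_Y^4)$ valid for $0 \leq t \leq \kappa_Y^2$, and a union bound over the $(M+1)M \leq 2M^2$ off-diagonal pairs produces the claimed factor $4M^2$.

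\textbf{Main obstacle.} The crux is the operator-norm identity $\|\cdot\| = \sigma_{\max}(A)/M$ for step-function kernels on a mesh of width $1/M$: the $1/M$ factor from the measure of each pixel row exactly cancels the $M$ blow-up in the matrix spectral-versus-entrywise norm inequality, so the operator norm is controlled entrywise. Without this cancellation the concentration bound would lose a factor of $M$ in $t$, destroying the rate. A secondary subtlety is that $K$ is only assumed differentiable off the diagonal; handling the diagonal pixels $I_u \times I_u$ (where only $2$ of the $4$ corners are retained in the average) requires the continuity-of-$K$ argument to extend the coordinate-wise Lipschitz bound across the diagonal.
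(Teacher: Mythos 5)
Your proposal is correct and follows essentially the same route as the paper: the identical bias--variance split around $\bbE[\hat K_{\mathrm{regular}}]$, the deterministic $2\sD/M$ Lipschitz bound on the bias, and entrywise Bernstein concentration for $\hat F_{u'v'}$ plus a union bound over the $\leq 2M^2$ off-diagonal pairs. The only (harmless) difference is that you derive the exact identity $\|\cdot\|=\sigma_{\max}(A)/M$ for the step-kernel operator, whereas the paper simply uses the cruder bound $\|\bA\|\leq\sup_{s,t}|A(s,t)|$ (valid since $|I|=1$), which already suffices; your explicit treatment of the diagonal pixels is a point the paper glosses over.
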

Note that the fixed resolution $M$ of the grid imposes a strict limit of $2\sD/M$ on how well $\hat{\bK}_{\mathrm{regular}}$ can estimate $\bK$ regardless of the number of samples $n$. For large enough $M$, this can still be adequate for recovering the graph $\tilde{\Omega}_{X}^{\pi}$ for a particular partition $\pi$.

\begin{corollary}\label{corollary-reg-1}
If $\hbP$ and $\hat{\Omega}^{\pi}$ are calculated for $\hbK = \hbK_{\mathrm{regular}}$, then we have the bounds
\begin{alignat*}{3}
    \bbP\{\|\hbP - \bP\| \geq \rho\} &\leq
    g(\rho/M_{R}) \quad &&\mbox{for } \rho &&\in (0, \rho_{0}), \mbox{ and }\\ 
    \bbP[\hat{\Omega}^{\pi} \neq \tilde{\Omega}_{X}^{\pi}] &\leq p^{2}g(\rho/M_{R}) \quad &&\mbox{for } \rho &&\in (0, \rho_{0} \wedge \rho_{P}),
\end{alignat*}
where $\rho_{0} = \tfrac{r}{2} \wedge M_{R}\left(\tfrac{2\sD}{M} + \kappa_{Y}^{2}\right)^{\beta \wedge 1/(1 + \beta \wedge 1)}$, $\rho_{P} = \tfrac{1}{2}\min \{\|\bP_{ij}\|: \bP_{ij} \neq \bzero \}$ and $g: (0, \infty) \to \bbR$ is given by
\begin{equation*}
    g(\rho) = 8M^{2} \exp \left\lbrace - \frac{cn}{\kappa_{Y}^{4}}\left[\rho^{1+1/\beta \wedge 1} - \frac{2\sD}{M}\right]_{+}^{2}\right\rbrace.
\end{equation*}
where $[y]_{+}$ denotes the positive part of $y$.
\end{corollary}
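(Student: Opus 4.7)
The plan is to chain the deterministic bound from Theorem \ref{thm:error_corr} with the sub-Gaussian concentration for $\hbK_{\mathrm{regular}}$ in Theorem \ref{thm:reg-obs-estmtr}, and then to transfer from $\hbR$ to $\hbP$ using the identity (\ref{eqn:prec_error}) and the tail bound on $\|\hbP\|$ developed in Theorem \ref{thm:conc_ineq}. The crucial observation is that the chain leading to Theorem \ref{thm:conc_ineq} used the empirical covariance only as a black-box input of the form $\|\hbK-\bK\|\leq \delta_{n}$; here we simply replace the empirical-covariance tail bound of Theorem \ref{thm:large_dev} with the regular-grid tail bound of Theorem \ref{thm:reg-obs-estmtr}.

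More concretely, I first invert the deterministic inequality $\|\hbR-\bR\|\leq M_R\,\delta_n^{(\beta\wedge 1)/(1+\beta\wedge 1)}$ (valid for the oracle ridge $\kappa_n = \delta_n^{1/(1+\beta\wedge 1)}$) to conclude that the event $\{\|\hbR-\bR\|>\rho\}$ forces $\|\hbK-\bK\|$ to exceed $(\rho/M_R)^{1+1/(\beta\wedge 1)}$. Plugging $t = (\rho/M_R)^{1+1/(\beta\wedge 1)} - 2\sD/M$ into Theorem \ref{thm:reg-obs-estmtr} then yields
\begin{equation*}
\bbP\{\|\hbR-\bR\|>\rho\}\;\leq\; 4M^{2}\exp\!\left\{-\frac{cn}{\kappa_Y^{4}}\Big[(\rho/M_R)^{1+1/(\beta\wedge 1)}-\tfrac{2\sD}{M}\Big]_{+}^{2}\right\}.
\end{equation*}
To pass to $\hbP$, I invoke (\ref{eqn:prec_error}): on the event $\{\|\hbR-\bR\|<r/2\}$ one has $\|\hbP\|\leq 2/r$ and therefore $\|\hbP-\bP\|\leq (2/r^{2})\|\hbR-\bR\|$. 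Splitting the event $\{\|\hbP-\bP\|\geq \rho\}$ into the complementary $\|\hbP\|$ tail and the $\|\hbR-\bR\|$ tail and applying a union bound contributes the missing factor of $2$, upgrading $4M^{2}$ to $8M^{2}$ and producing the first claim $\bbP\{\|\hbP-\bP\|\geq \rho\}\leq g(\rho/M_R)$. The admissible range $\rho<\rho_0$ encodes exactly these two requirements: $\rho<r/2$ (so the $\|\hbP\|$ tail bound applies) and $t=(\rho/M_R)^{1+1/(\beta\wedge 1)}-2\sD/M\leq \kappa_Y^{2}$ (which is the regime in Theorem \ref{thm:reg-obs-estmtr}).

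The graph statement follows as in the proof of Theorem \ref{thm:consistency}: the event $\{\hat\Omega^\pi\neq\tilde\Omega_X^\pi\}$ is contained in $\bigcup_{i,j}\{\|\hbP_{ij}-\bP_{ij}\|\geq \rho\}$ for any $\rho<\rho_P$, so a union bound over the $p^{2}$ pixels together with $\|\hbP_{ij}-\bP_{ij}\|\leq \|\hbP-\bP\|$ and the first claim produces the stated $p^{2}g(\rho/M_R)$ bound. The only obstacle worth flagging is the book-keeping of the admissible range of $\rho$ — ensuring simultaneously that the $\|\hbP\|$ tail bound applies, that the parameter $t$ stays within the scope of Theorem \ref{thm:reg-obs-estmtr}, and that $\rho<\rho_P$ for the graph result — but these are precisely the ceilings encoded in $\rho_0$ and $\rho_0\wedge\rho_P$; no fresh probabilistic input is required beyond Theorem \ref{thm:reg-obs-estmtr}.
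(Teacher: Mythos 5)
Your proof follows exactly the route the paper intends: the paper omits the proof of this corollary, stating only that it follows from Theorem \ref{thm:reg-obs-estmtr} ``in a straightforward way as in the complete observations case,'' and your chain --- inverting the deterministic bound of Theorem \ref{thm:error_corr} to reduce the $\hbR$ tail to a $\hbK_{\mathrm{regular}}$ tail, invoking Theorem \ref{thm:reg-obs-estmtr} with $t=(\rho/M_{R})^{1+1/(\beta\wedge 1)}-2\sD/M$, passing to $\hbP$ via (\ref{eqn:prec_error}) and the $\|\hbP\|$ tail bound, and finishing with the $p^{2}$ pixel union bound from the proof of Theorem \ref{thm:consistency} --- is precisely that argument, with the ranges $\rho_{0}$ and $\rho_{0}\wedge\rho_{P}$ accounted for correctly. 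One bookkeeping remark: on the good event your own estimate gives $\|\hbP-\bP\|\leq (2/r^{2})\|\hbR-\bR\|$, so the union bound honestly yields $g(\rho r^{2}/(2M_{R}))$ rather than $g(\rho/M_{R})$; this is the same $r^{2}/2$ rescaling that Theorem \ref{thm:conc_ineq} tracks explicitly through $c_{P}$, and the corollary's statement (and hence your final line) silently absorbs it.
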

Interestingly, the above bounds offer no control on large deviations of $\hbP$ or $\hat{\Omega}^{\pi}$ unless $M$ is large enough. The following corollary provides a lower bound for $M$ which is sufficient for the expected rate of convergence of $\hbP$ and the finite sample bound on familywise error rate to hold.
\begin{corollary}\label{corollary-reg-2}
For a fixed partition $\pi$, if 
\begin{equation*}
    M > 2\sD \left[\frac{M_{R}}{\tfrac{r}{2} \wedge \rho_{P}}\right]^{1+1/\beta \wedge 1}
\end{equation*}
then $\|\hbP - \bP\| = \cO_{\bbP}(n^{-\beta \wedge 1/2(1+\beta \wedge 1)})$. Additionally, if for some $\alpha \in (0, 1)$, we have
\begin{equation*}
    \rho = M_{R}\left[\frac{2\sD}{M} + \sqrt{\frac{\kappa_{Y}^{4}}{cn} \log \left(\frac{8M^{2}p^{2}}{\alpha}\right)} \right]^{\beta \wedge 1/(1+\beta \wedge 1)} \in (0, \rho_{0} \wedge \rho_{P})
\end{equation*}
then $\bbP[\hat{\Omega}^{\pi}(\rho) \neq \tilde{\Omega}_{X}^{\pi}] \leq \alpha$.
\end{corollary}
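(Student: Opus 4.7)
The plan is to obtain both conclusions by direct substitution into the concentration bound
\[
g(\rho) = 8M^2 \exp\left\{-\tfrac{cn}{\kappa_Y^4}\left[\rho^{1+1/\beta\wedge 1} - \tfrac{2\sD}{M}\right]_+^2\right\}
\]
furnished by Corollary \ref{corollary-reg-1}, and then verifying that the resulting $\rho$ lies in the admissibility interval $(0, \rho_0)$ or $(0, \rho_0 \wedge \rho_P)$.

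I would start with the second, familywise error rate assertion, which is essentially an algebraic inversion. From $\bbP[\hat{\Omega}^\pi(\rho) \neq \tilde{\Omega}_X^\pi] \leq p^2 g(\rho/M_R)$, I set $p^2 g(\rho/M_R) = \alpha$ and solve for $\rho$. Isolating the exponent gives the requirement
\[
(\rho/M_R)^{1+1/\beta\wedge 1} - 2\sD/M = \sqrt{\kappa_Y^4 \log(8M^2 p^2/\alpha)/(cn)},
\]
which inverts to exactly the $\rho$ stated in the corollary. Admissibility $\rho \in (0, \rho_0 \wedge \rho_P)$ is hypothesised, so the second bound of Corollary \ref{corollary-reg-1} applies directly and yields $\alpha$.

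For the rate assertion, I would first observe that the condition $M > 2\sD[M_R/(r/2\wedge\rho_P)]^{1+1/\beta\wedge 1}$ is algebraically equivalent to $M_R(2\sD/M)^{\beta\wedge 1/(1+\beta\wedge 1)} < r/2 \wedge \rho_P$, which forces the bias-induced factor in $\rho_0 = r/2 \wedge M_R(2\sD/M + \kappa_Y^2)^{\beta\wedge 1/(1+\beta\wedge 1)}$ to exceed $r/2$, so $\rho_0 \geq r/2$ independently of $n$. Choosing $\rho_n = Cn^{-\beta\wedge 1/2(1+\beta\wedge 1)}$, one has $\rho_n \in (0, \rho_0)$ for all sufficiently large $n$. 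Substituting into $g(\rho_n/M_R)$ produces the inner quantity $(C/M_R)^{1+1/\beta\wedge 1} n^{-1/2} - 2\sD/M$; in the range of $n$ in which the statistical term dominates the bias, the exponential factor decays and can be driven below any $\epsilon > 0$ by enlarging $C$, which is precisely the $\cO_\bbP$ claim.

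The main obstacle is the bias-variance tradeoff introduced by the fixed grid. The persistent bias of order $M^{-1}$ in $\|\hbK - \bK\|$ translates via the ridge inversion of Theorem \ref{thm:error_corr} to a term of order $M_R(2\sD/M)^{\beta\wedge 1/(1+\beta\wedge 1)}$ in $\|\hbP - \bP\|$, which eventually saturates the purely statistical rate $n^{-\beta\wedge 1/2(1+\beta\wedge 1)}$ as $n \to \infty$. The role of the hypothesised lower bound on $M$ is to push this saturation level below the thresholds $r/2$ and $\rho_P$ that govern the admissible range, so that the tail bound from Corollary \ref{corollary-reg-1} remains nonvacuous and the stated asymptotic rate is attained within the range of sample sizes for which the bound is meaningful.
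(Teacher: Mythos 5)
The paper omits this proof as a routine consequence of Corollary \ref{corollary-reg-1}, and your derivation is exactly that routine argument: the familywise-error part is an exact algebraic inversion of $p^{2}g(\rho/M_{R})=\alpha$ (the exponents $\tfrac{\beta\wedge 1}{1+\beta\wedge 1}$ and $1+\tfrac{1}{\beta\wedge 1}$ indeed cancel), and you correctly identify that the hypothesis on $M$ serves to place the bias saturation level $M_{R}(2\sD/M)^{\beta\wedge 1/(1+\beta\wedge 1)}$ strictly below $\tfrac{r}{2}\wedge\rho_{P}$, so that a nonvacuous admissible threshold window exists and the stated rate holds in the regime where the statistical term dominates the grid bias. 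One intermediate claim is off --- the condition on $M$ does not force $\rho_{0}\geq r/2$ (it upper-bounds $M_{R}(2\sD/M)^{\beta\wedge 1/(1+\beta\wedge 1)}$ rather than lower-bounding $M_{R}(2\sD/M+\kappa_{Y}^{2})^{\beta\wedge 1/(1+\beta\wedge 1)}$, and $\rho_{0}\leq r/2$ by definition) --- but this is immaterial, since all that is needed is $M_{R}(2\sD/M)^{\beta\wedge 1/(1+\beta\wedge 1)}<\rho_{0}\wedge\rho_{P}$, which does follow from the hypothesis.
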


\subsubsection{Sparse Observations}
\label{sec:sparse_obs}
Assume that $\{T_{ki}: 1 \leq i \leq i_{k}\}_{k=1}^{n}$ be independent random variables which are uniformly distributed on $I$. As before, we divide $I$ into $M$ contiguous intervals $\{I_{p}\}_{p=1}^{M}$ where $M$ is regarded as a tuning parameter. Consider the estimator $\hat{K}(s, t) = M^{2}\hat{K}_{pq}$ for $(s, t) \in I_{p} \times I_{q}$, where 
\begin{equation*}
    \hat{K}_{pq} = \frac{1}{n}\sum_{k=1}^{n}\frac{1}{i_{k}(i_{k}-1)}\sum_{i, j=1}^{i_{k}} (1-\delta_{ij}) Y_{ki}Y_{kj} \mathbf{1}_{\{T_{ki} \in I_{p}\}}\mathbf{1}_{\{T_{kj} \in I_{q}\}}.
\end{equation*}

\begin{theorem}\label{thm:sparse-obs-estmtr}
    Suppose $X = \{X_{u}\}_{u \in I}$ is a zero-mean second-order stochastic process with a continuous covariance $K: I \times I \to \bbR$, differentiable outside the diagonal, which satisfies $\sD = \sup_{u \neq v} \left|\frac{\partial}{\partial u}K(u, v)\right| < \infty$. Assume furthermore that $\kappa_{Y} = \sup_{u} \|X(u)\|_{\psi_{2}} + \max_{k,i}\|\xi_{ki}\|_{\psi_{2}} < \infty$.
    
    The estimator $\hat{\bK}_{\mathrm{sparse}}$ defined as the integral operator with the kernel $\hat{K}_{\mathrm{sparse}}$ satisfies for every $0 \leq t \leq \kappa_{Y}^{2}$,
    \begin{equation*}
        \bbP\{ \|\hat{\bK}_{\mathrm{sparse}} - \bK\| \geq t + \tfrac{1}{M}\sD\} \leq 2M^{2}\exp \left[- \frac{cnt^{2}}{M^{4}\kappa_{Y}^{4}} \right],
    \end{equation*}
    where $c$ is an absolute constant and $M$ is the number of partitions. For $M \in (2\sD/t, 4\sD/t)$, we have for $0 \leq t \leq 2\kappa_{Y}^{2}$,
    \begin{equation*}
        \bbP\{ \|\hat{\bK}_{\mathrm{sparse}} - \bK\| \geq t\} \leq \frac{32\sD^{2}}{t^{2}} \exp \left[ - \frac{cnt^{6}}{256\sD^{4}\kappa_{Y}^{4}} \right].
    \end{equation*}
\end{theorem}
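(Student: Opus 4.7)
My plan is to decompose $\hat{\bK}_{\mathrm{sparse}} - \bK = \bE + \bB$, where $\bB := \bbE\hat{\bK}_{\mathrm{sparse}} - \bK$ is a deterministic bias operator and $\bE := \hat{\bK}_{\mathrm{sparse}} - \bbE\hat{\bK}_{\mathrm{sparse}}$ is the stochastic fluctuation, and treat each part separately. The first inequality then follows from a bias bound $\|\bB\|\leq \sD/M$ together with a sub-Gaussian tail bound on $\|\bE\|$. The second follows from the first by selecting $M$ so that the bias eats at most half of the given threshold.

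For the mean, since $T_{ki}$ and $T_{kj}$ are independent uniforms whenever $i\neq j$, a direct computation gives $\bbE[\hat{K}_{pq}] = \int_{I_p\times I_q}K(u,v)\,du\,dv$, so that $\bbE\hat{K}(s,t)$ equals the pixel-average of $K$ on each $I_p\times I_q$. The off-diagonal Lipschitz hypothesis $|\partial_u K|\leq \sD$ (extended across the measure-zero diagonal by continuity of $K$) yields $|K(u,v)-K(s,t)|\leq \sD(|u-s|+|v-t|)$ for any two points in the same pixel, so that $|B(s,t)|\leq 2\sD/M$ pointwise. A Schur-type argument combined with telescoping through intermediate pixel-averages in each coordinate refines this to $\|\bB\|\leq \sD/M$.

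For the stochastic part, fix a pixel $(p,q)$ and write $\hat{K}_{pq}-\bbE\hat{K}_{pq} = \frac{1}{n}\sum_{k=1}^n(Z_k - \bbE Z_k)$ with $Z_k := \frac{1}{i_k(i_k-1)}\sum_{i\neq j}Y_{ki}Y_{kj}\mathbf{1}_{I_p}(T_{ki})\mathbf{1}_{I_q}(T_{kj})$. From $\|Y_{ki}\|_{\psi_2}\leq \kappa_Y$, the product satisfies $\|Y_{ki}Y_{kj}\|_{\psi_1}\leq \kappa_Y^2$; and since, conditional on $T$, $Z_k$ is a convex combination of such products whose weights sum to at most one, the Orlicz triangle inequality gives $\|Z_k\|_{\psi_1}\leq \kappa_Y^2$ conditionally and hence unconditionally (by integrating the MGF characterization of $\psi_1$). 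Bernstein's inequality then yields $\bbP[|\hat{K}_{pq}-\bbE\hat{K}_{pq}|\geq s] \leq 2\exp[-cns^2/\kappa_Y^4]$ in the sub-Gaussian regime $s\leq \kappa_Y^2$. Since $E(s,t) = M^2(\hat{K}_{pq}-\bbE\hat{K}_{pq})$ on $I_p\times I_q$, the Hilbert--Schmidt identity combined with $\sqrt{\sum_{pq}x_{pq}^2}\leq M\max_{pq}|x_{pq}|$ gives $\|\bE\|\leq \|\bE\|_{\mathrm{HS}}\leq \max_{pq}M^2|\hat{K}_{pq}-\bbE\hat{K}_{pq}|$, and a union bound over the $M^2$ pixels produces $\bbP[\|\bE\|\geq t]\leq 2M^2\exp[-cnt^2/(M^4\kappa_Y^4)]$ for $t\leq \kappa_Y^2$. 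Combining with the bias bound delivers the first inequality.

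The second inequality is obtained by applying the first with $t$ replaced by $t/2$: for $M\in(2\sD/t, 4\sD/t)$ one has $\sD/M < t/2$, so $\{\|\hat{\bK}_{\mathrm{sparse}}-\bK\|\geq t\}\subseteq\{\|\bE\|\geq t/2\}$, while $M^2 < 16\sD^2/t^2$ and $M^4 < 256\sD^4/t^4$ upgrade the Bernstein exponent $(t/2)^2/M^4$ to one of order $t^6/\sD^4$, yielding the claimed form after absorbing numerical constants into $c$. The admissibility condition $t/2\leq \kappa_Y^2$ corresponds exactly to $t\leq 2\kappa_Y^2$. The main technical obstacle I anticipate is tracking the sub-exponential norm of $Z_k$ uniformly in the random number $i_k$ of observations per path and the random design $T_{ki}$; the key is to condition on $T$ first, exploit the $\ell^1$-normalization of the indicator weights to keep $\|Z_k\|_{\psi_1}$ at $\kappa_Y^2$ regardless of $i_k$, and only then integrate out $T$.
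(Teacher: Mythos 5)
Your proposal is correct and follows essentially the same route as the paper: the same bias--fluctuation decomposition, the same $\sD/M$ bias bound (which the paper obtains directly by averaging the Lipschitz estimate $\sD(|u-s|+|v-t|)$ over the pixel, no Schur/telescoping device needed), the same $\psi_1$ bound $\|Z_k\|_{\psi_1}\leq\kappa_Y^2$ for the per-curve averages followed by Bernstein and a union bound over the $M^2$ pixels, and the same choice of $M$ to trade bias against variance for the second inequality. The only cosmetic difference is that you pass through the Hilbert--Schmidt norm where the paper bounds the operator norm by the sup norm of the kernel, and you spell out the derivation of the second inequality, which the paper leaves implicit.
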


\begin{remark}
Translating the finite sample bound into a rate, we see that the rate under sparsity is considerably slower than under complete observation: $\|\hat{\bK}_{\mathrm{sparse}} - \bK\| = \cO_{\bbP}(n^{-1/6+\epsilon})$ for every $\epsilon > 0$ as opposed to $\|\hat{\bK} - \bK\| = \cO_{\bbP}(n^{-1/2})$. If instead of using $\hbK_{\mathrm{sparse}}$ we assume additional smoothness and use the PACE covariance estimator $\hbK_{\mathrm{pace}}$, we can attain asymptotic rates between $O(n^{-1/6})$ and $O(n^{-1/4})$, but we know of no finite sample bounds for $\hbK_{\mathrm{pace}}$. Such bounds are instrumental for establishing model selection consistency in the infinite resolution limit, see Section \ref{sec:consistency}. Also see Section \ref{sec:sparse_simulations} for a numerical comparison of $\hbK_{\mathrm{pace}}$ and $\hbK_{\mathrm{sparse}}$.
\end{remark}

\begin{corollary}\label{corollary-sparse-1}
If $\hbP$ and $\hat{\Omega}^{\pi}$ are calculated for $\hbK = \hbK_{\mathrm{sparse}}$, then we have the bounds
\begin{alignat*}{3}
    \bbP\{\|\hbP - \bP\| \geq \rho\} 
    &\leq h(\rho/M_{R}) \quad &&\mbox{ for } \rho &&\in (0, \rho_{0}), \mbox{ and } \\
    \bbP[\hat{\Omega}^{\pi} \neq \tilde{\Omega}_{X}^{\pi}] &\leq p^{2}h(\rho/M_{R}) \quad &&\mbox{ for } \rho &&\in (0, \rho_{0} \wedge \rho_{P}),
\end{alignat*}
where $\rho_{0} = \tfrac{r}{2} \wedge 2M_{R}\left(\tfrac{2\sD}{M} + \kappa_{Y}^{2}\right)^{\beta \wedge 1/(1+\beta \wedge 1)}$, $\rho_{P} = \frac{1}{2}\min \{\|\bP_{ij}\|: \bP_{ij} \neq \bzero \}$ and $h: (0, \infty) \to \bbR$ is given by
\begin{equation*}
    h(\rho) = 64\sD^{2} \exp\left\lbrace 
    -\frac{cn}{256\sD^{4}\kappa_{Y}^{4}} \rho^{6+6/\beta \wedge 1} -2 \left(1+\frac{1}{\beta \wedge 1}\right) \log \rho
    \right\rbrace.
\end{equation*}  
Consequently, $\|\hbP - \bP\| = \cO_{\bbP}(n^{\epsilon-(\beta \wedge 1)/6(1+\beta \wedge 1)})$ for every $\epsilon > 0$.
\end{corollary}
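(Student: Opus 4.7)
The plan is to follow the template of Corollary \ref{corollary-reg-1}, substituting the sparse concentration bound of Theorem \ref{thm:sparse-obs-estmtr} for its regular counterpart. The proof chains three tails in sequence: covariance to correlation via the optimal bias--variance tradeoff of Theorem \ref{thm:error_corr}; correlation to precision via the resolvent identity $\hbP - \bP = \hbP(\bR - \hbR)\bP$ together with Assumption \ref{asm:eigenvalues}; and precision to graph via a union bound over the $p^{2}$ pixels.

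First, I would invoke the second, adaptive form of Theorem \ref{thm:sparse-obs-estmtr}, in which $M$ is chosen in the window $(2\sD/t, 4\sD/t)$ so that the bias term $\sD/M$ is absorbed into the deviation $t$, giving
\begin{equation*}
\bbP\{\|\hbK_{\mathrm{sparse}} - \bK\| \geq t\} \leq \frac{32\sD^{2}}{t^{2}} \exp\left(-\frac{cnt^{6}}{256\sD^{4}\kappa_{Y}^{4}}\right), \qquad 0 \leq t \leq 2\kappa_{Y}^{2}.
\end{equation*}
Setting $t = (\rho/M_{R})^{1+1/(\beta\wedge 1)}$ and applying Theorem \ref{thm:error_corr} at the optimal ridge $\kappa_{n} = t^{1/(1+\beta\wedge 1)}$ translates this into a tail bound for $\|\hbR - \bR\|$: the exponent $\rho^{6+6/(\beta\wedge 1)}$ arises from $t^{6}$ and the prefactor $(\rho/M_{R})^{-2-2/(\beta\wedge 1)}$ from $t^{-2}$, reproducing the $-2(1+1/(\beta\wedge 1))\log(\rho/M_{R})$ term inside the exponential defining $h$. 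The admissibility constraint $t \leq 2\kappa_{Y}^{2}$ together with the requirement $\|\hbR - \bR\| \leq r/2$ dictates the form of $\rho_{0}$.

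Second, under Assumption \ref{asm:eigenvalues} a Neumann-series argument gives $\|\hbP\| \leq 2/r$ on the event $\{\|\hbR - \bR\| < r/2\}$, so the resolvent identity yields $\|\hbP - \bP\| \leq 2\|\hbR - \bR\|/r^{2}$ on that event; on the complementary event the tail is itself controlled by the bound above. Combining these two contributions doubles the prefactor from $32\sD^{2}$ to $64\sD^{2}$ in $h$, and the multiplicative rescaling by $r^{2}/2$ is absorbed into the admissible range for $\rho$ via $\rho_{0}$, which establishes the first claim. For the graph bound, the event $\{\hat{\Omega}^{\pi} \neq \tilde{\Omega}_{X}^{\pi}\}$ forces at least one pixel entry of $\hbP$ to be mis-thresholded; since $\rho < \rho_{P}$, each such mis-thresholding is contained in $\{\|\hbP_{ij} - \bP_{ij}\| > \rho\}$, and a union bound over the $p^{2}$ pixels produces the second claim.

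Finally, the asymptotic rate $\|\hbP - \bP\| = \cO_{\bbP}(n^{\epsilon - (\beta\wedge 1)/6(1+\beta\wedge 1)})$ follows by balancing the exponential term against a constant, which yields $\rho \asymp n^{-(\beta\wedge 1)/6(1+\beta\wedge 1)}$; the polynomial prefactor contributes only a $\log n$ correction via the $-2(1+1/(\beta\wedge 1))\log\rho$ term, comfortably absorbed by the $n^{\epsilon}$ slack. The main technical obstacle is the careful bookkeeping of constants through three nested tails, in particular ensuring that the $r^{2}/2$ factor from the precision step and the powers of $\sD$ from the sparse regime are absorbed into $\rho_{0}$ rather than into $h$; this accounts for why the sparse rate is $n^{-(\beta\wedge 1)/6(1+\beta\wedge 1)}$ rather than the $n^{-(\beta\wedge 1)/2(1+\beta\wedge 1)}$ rate available under complete observation.
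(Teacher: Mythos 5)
Your proposal is correct and follows exactly the route the paper intends: the paper omits this proof, stating only that it "follows from Theorem \ref{thm:sparse-obs-estmtr} in a straightforward way as in the complete observations case," and your three-stage chain (sparse covariance tail $\to$ Theorem \ref{thm:error_corr} for $\hbR$ $\to$ the resolvent identity and union bound as in Theorems \ref{thm:conc_ineq} and \ref{thm:consistency}) is precisely that template, with the constants $64\sD^{2}$, the $\log\rho$ term, and the rate $n^{\epsilon-(\beta\wedge 1)/6(1+\beta\wedge 1)}$ all accounted for correctly.
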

Although, we are unable to derive a closed form expression for the threshold $\rho$ required for maintaining a given familywise error rate $\alpha$, it is clear from the bound on $\|\hbP - \bP\|$ that $\rho$ can not decrease faster than $n^{-(\beta \wedge 1)/6(1+\beta \wedge 1)}$ as a function of the sample size $n$.


\subsection{Model Selection Consistency at Infinite Resolution}\label{sec:consistency}

So far we have considered the question of whether we can consistently recover the graph $\Omega_{X}$ at a given resolution $\pi$ in the form of $\tilde{\Omega}^{\pi}_{X}$. A related question is whether we can recover the graph $\Omega_{X}$ in the limit as $n \to \infty$. Put differently, how should we refine our partition $\pi$ as the sample size $n$ increases in order to construct a consistent estimator for the graph $\Omega_{X}$ itself. We can give a concrete answer to this question using the finite sample guarantees we have derived in this section. For simplicity, let's consider the complete observation regime from Section \ref{sec:complete_obs}. Similar results can be derived for other regimes using the appropriate finite sample guarantees.

Let $\{\pi_{j}\}_{j=1}^{\infty}$ be partitions on $U$ which separate points and $\{\alpha_{j}\}_{j=1}^{\infty} \subset \bbR$ be such that $\sum_{j=1}^{\infty} \alpha_{j} < \infty$. For every $j \geq 1$, let $\hat{\Omega}_{j}$ denote the estimator $\hat{\Omega}^{\pi_{j}}$ constructed only using the sample $\{ X_{k} \}_{k = 1}^{n_{j}}$ with an admissible value of the threshold $\rho_{j}$ according to Theorem \ref{thm:consistency} where the parameter $n_{j}$ has been chosen to be the smallest $n$ such that
\begin{equation}\label{eqn:stpd_condn}
    n > \frac{1}{c_{P_{j}}}[\tfrac{1}{2}r_{j} \wedge \rho_{R_{j}} \wedge \rho_{P_{j}}]^{-2-2/\beta_{j} \wedge 1} \log \left[\frac{2p_{j}^{2}}{\alpha_{j}}\right].
\end{equation}
Here, $p_{j}$ is the cardinality of $\pi_{j}$ while $\beta_{j}$, $r_{j}$, $\rho_{R_{j}}$, $\rho_{P_{j}}$ and $c_{P_{j}}$ are the parameters $\beta$, $r$, $\rho_{R}$, $\rho_{P}$ and $c_{P}$ corresponding to the correlation operator $\bR = \bR_{\pi_{j}}$. Essentially, we are saying that for larger sample sizes $n > n_{j}$ we can recover $\Omega_{X}$ to higher resolution $p_{j}$ with an eventually decreasing probability of failure $\alpha_{j}$ since $\alpha_{j} \to 0$ as $j \to \infty$. We now have the following result.
\begin{theorem}[Consistency under Resolution Refinement]\label{thm:consistency2}
Let $X$ be a Gaussian process on a compact set $U \subset \bbR$ with the continuous covariance $K$ corresponding to a (Gaussian) random element in the Hilbert space $L^{2}(U, \mu)$. Let $\{X_{k}\}_{k = 1}^{n}$ be independent copies of $X$ and $\{\pi_{j}\}_{j=1}^{\infty}$ be partitions on $U$ which separate points such that: (a) $\pi_{j+1}$ is finer than $\pi_{j}$ for every $j \geq 1$ and (b) the associated correlation operators $\bR_{\pi_{j}}$ satisfy Assumptions \ref{asm:regularity} and \ref{asm:eigenvalues}. Then for $\hat{\Omega}_{j}$ as defined before, 
\begin{equation*}
    \lim_{n \to \infty} ~\hat{\Omega}_{\max \{j: n_{j} < n \}} = \Omega_{X} \mbox{ almost surely.}
\end{equation*}
In other words, $\Omega_{\max \{j: n_{j} < n \}}$ is a consistent estimator of $\Omega_{X}$.
\end{theorem}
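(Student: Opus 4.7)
The plan is to combine the finite-sample guarantee of Theorem~\ref{thm:consistency} with a Borel--Cantelli argument across resolution levels, and then to convert the resulting sequence of pixelated approximations into the limiting graph $\Omega_{X}$ via the exact identifiability of Corollary~\ref{cor:identifiability}.

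First I would fix $j$ and apply Theorem~\ref{thm:consistency} at resolution $\pi_{j}$ to the estimator $\hat{\Omega}_{j}$ built from $n_{j}$ samples with threshold $\rho_{j}$. By the defining inequality (\ref{eqn:stpd_condn}) for $n_{j}$, this yields $\bbP[\hat{\Omega}_{j} \neq \tilde{\Omega}_{X}^{\pi_{j}}] \leq \alpha_{j}$. Since $\sum_{j} \alpha_{j} < \infty$, the first Borel--Cantelli lemma---which needs only summability of the probabilities and hence does not care whether the samples overlap across $j$---guarantees that, almost surely, there exists a random finite index $J_{0}$ with $\hat{\Omega}_{j} = \tilde{\Omega}_{X}^{\pi_{j}}$ for all $j \geq J_{0}$.

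Next I would verify that $J(n) := \max\{j : n_{j} < n\} \to \infty$ as $n \to \infty$. Since $p_{j} \to \infty$ along a point-separating refinement, inspection of (\ref{eqn:stpd_condn}) shows $n_{j} \to \infty$, so $J(n)$ diverges. Hence on the probability-one event of the previous step, we have $J(n) \geq J_{0}$ for all sufficiently large $n$ and therefore $\hat{\Omega}_{J(n)} = \tilde{\Omega}_{X}^{\pi_{J(n)}}$ eventually. It remains to identify the set-theoretic limit of $\tilde{\Omega}_{X}^{\pi_{J(n)}}$. Because $\pi_{j+1}$ refines $\pi_{j}$, the pixelated approximations are nested, $\tilde{\Omega}_{X}^{\pi_{j+1}} \subseteq \tilde{\Omega}_{X}^{\pi_{j}}$, and by Corollary~\ref{cor:identifiability} the invertibility of each $\bR_{\pi_{j}}$ combined with the point-separation property forces $\bigcap_{j} \tilde{\Omega}_{X}^{\pi_{j}}$ to coincide with $\Omega_{X}$ (in the $\epsilon$-envelope sense made precise after Definition~\ref{def:graph_X}). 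This gives $\hat{\Omega}_{J(n)} \to \Omega_{X}$ almost surely, as claimed.

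The main obstacle I anticipate is not probabilistic but set-theoretic: articulating the precise sense in which $\tilde{\Omega}_{X}^{\pi_{J(n)}} \to \Omega_{X}$. The anomaly highlighted in Example~\ref{ex:brownian}---that $X$ need not satisfy the separation equation for $\Omega_{X}$ itself, forcing the use of $\tilde{\Omega}$ rather than $\Omega^{\pi}$---must be carefully tracked through the limit, and one must confirm that the nested intersection of $\tilde{\Omega}_{X}^{\pi_{j}}$ along a point-separating refinement coincides with the target in Definition~\ref{def:graph_X} (up to closure). Once this bookkeeping is done, the rest of the proof is a mechanical combination of Theorem~\ref{thm:consistency}, Borel--Cantelli, and Corollary~\ref{cor:identifiability}.
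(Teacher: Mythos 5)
Your proposal is correct and follows essentially the same route as the paper's proof: a first Borel--Cantelli argument from $\sum_j \alpha_j < \infty$ to get $\hat{\Omega}_{j} = \tilde{\Omega}_{X}^{\pi_{j}}$ eventually almost surely, followed by the identification $\cap_{j} \tilde{\Omega}_{X}^{\pi_{j}} = \Omega_{X}$ along the point-separating refinement. Your added care in checking that $\max\{j : n_j < n\} \to \infty$ and in flagging the closure/$\epsilon$-envelope subtlety from Example~\ref{ex:brownian} only makes explicit what the paper leaves implicit.
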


\section{Implementation}\label{sec:implementation}
To implement the procedure in practice, one needs to specify the partition $\pi$, the ridge $\kappa$, and the threshold $\rho$. We now discuss this specification in a finite-sample context. 
 
\begin{itemize}
 \item \textbf{Partition}. The choice of partition $\pi$ is primarily depends on the graph resolution desired and on which regions of the domain hold more or less interest. It is also affected by considerations of sample size and grid resolution for discrete observations. One should require the partition to be finer over regions of greater interest so as to recover the graph at a higher resolution there. On the other hand, if all regions of the domain are of equal interest, one should work with a regular partition consisting of contiguous subsets of roughly equal volume (eg. contiguous subintervals of equal length for an interval). 
 One can 
 adopt a scale-space approach and consider multiple values of $p$, searching for persistent zero patterns in the associated precision operator matrices.

 
 \item \textbf{Ridge}. We choose $\kappa$ using $k$-fold cross-validation. We construct a partition $S$ of the samples $\{X_{k}\}_{k=1}^{n}$ into $|S|$ subsets of roughly equal size. We calculate the estimators $\hbK_{s}$ and $\hbK_{-s}$ of $\bK$ using samples in and not in $s \in S$, respectively, and choose $\kappa$ according to
\begin{equation}\label{eqn:ridge_choice}
    \kappa = \argmin_{\lambda \in \Lambda} \left[ \frac{1}{|S|}\sum_{s \in S}\| \dg\hbK_{s}  - (\dg\hbK_{s})(\lambda\bI + \dg\hbK_{-s})^{-1}(\dg\hbK_{s}) \| \right]
\end{equation}
where $\Lambda$ is a suitably chosen range of ridge values. Alternatively, one can employ a generalized cross-validation approach as in \textcite{lisolea2018}.

 


\item \textbf{Threshold}. We plot the density function of the values $\{\log_{10} \|\hbP_{ij}\|: 1 \leq i,j \leq p\}$ and treat the local minima and ``elbows" of the curve as corresponding to candidates for the threshold as illustrated in Figure \ref{fig:separate_comps}. Intuitively speaking, the threshold separates the values into two components corresponding to zero and nonzero $\hbP_{ij}$ and represents the decision boundary for the purpose of classifying $\bP_{ij}$ into one of these components. Alternatively, one can use the stability selection approach of \textcite{meinshausen2010} which is often used for model selection in LASSO and graphical LASSO.

\begin{figure}[h]
    \centering
    \begin{subfigure}{.48\textwidth}
        \centering
        \includegraphics[width=\linewidth, trim={0 0 2.5cm 0}, clip]{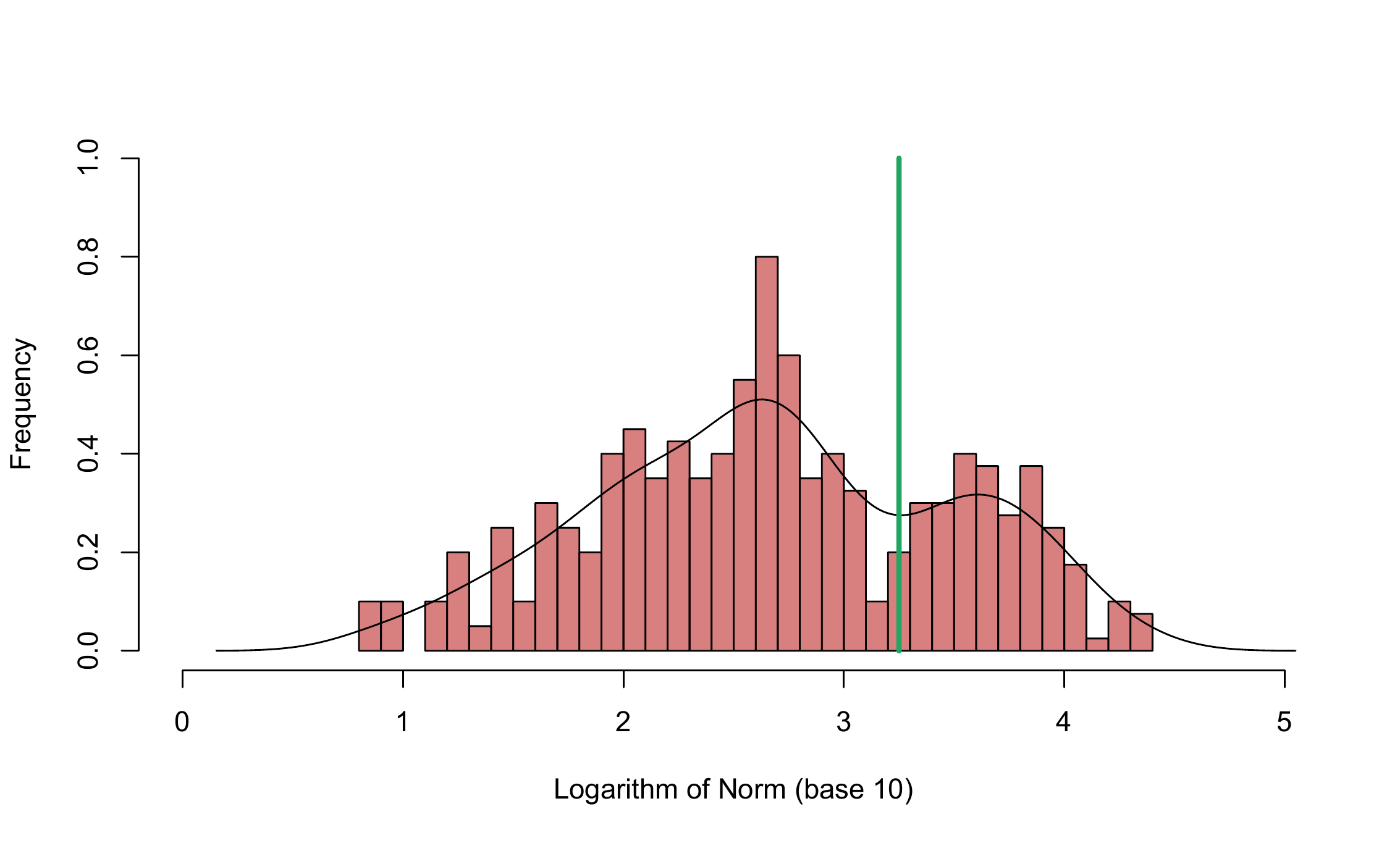}
        \caption{}
        \label{fig:separate_comp}
    \end{subfigure}
    \begin{subfigure}{.48\textwidth}
        \centering
        \includegraphics[width=\linewidth, trim={0 0 2.5cm 0}, clip]{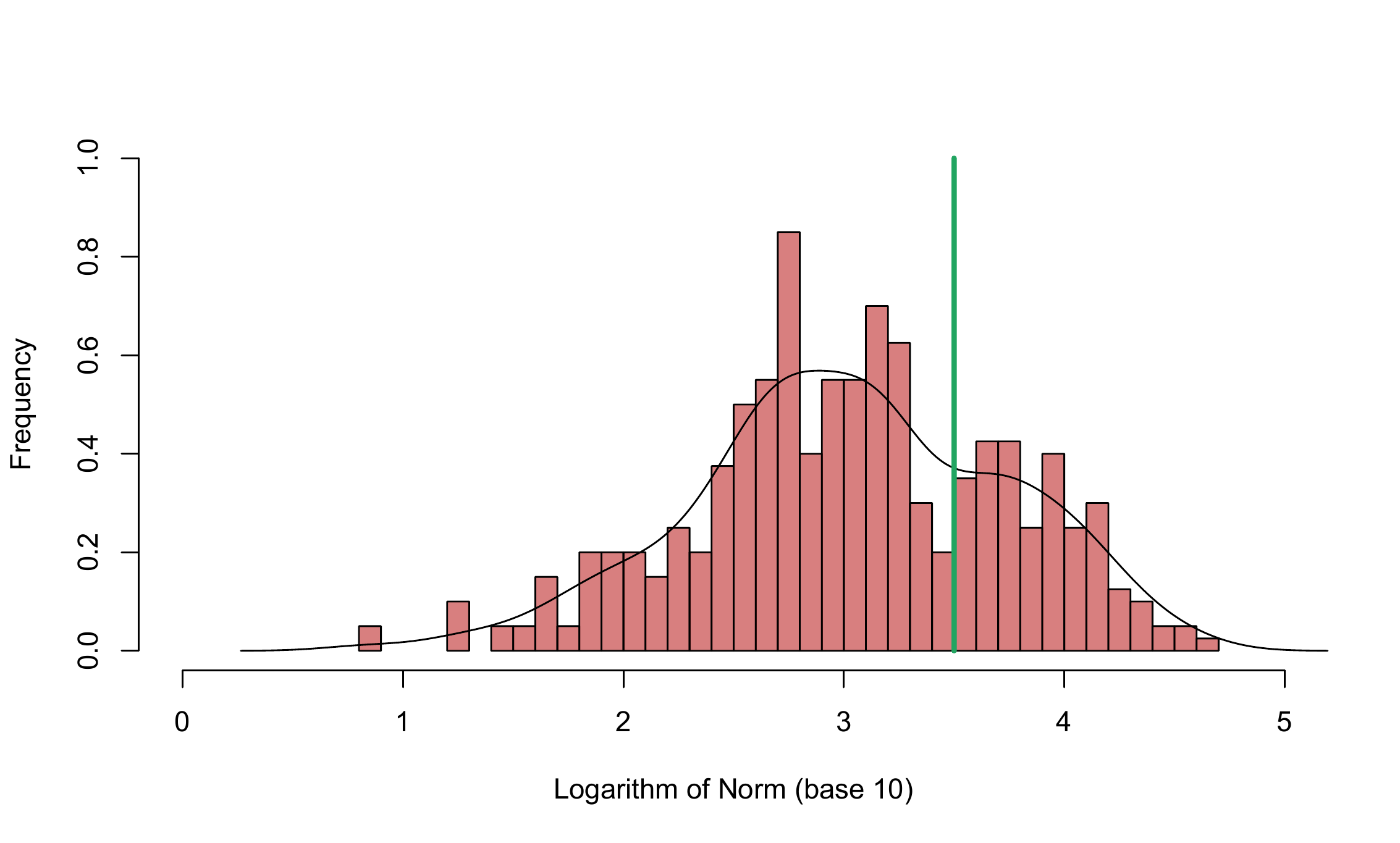}
        \caption{}
        \label{fig:separate_comp2}
    \end{subfigure}\\[0.5cm]
    \caption{The local minima (a) and elbows (b) of the kernel density estimator of $\{\log_{10} \|\hbP_{ij}\|: 1 \leq i,j \leq p\}$ serve as good candidates for the threshold $\rho$.}
    \label{fig:separate_comps}
\end{figure}




According to our theoretical results, $\rho$ need not decrease with $n$, but rather any sufficiently small value will suffice. 
Naturally, as $n \to \infty$, the $ij$-entries of $\hbP_{ij}$ corresponding to $\bP_{ij} = \bzero$ converge to zero while those for which $\bP_{ij} \neq \bzero$ converge to $\bP_{ij}$.

\end{itemize}
  

To implement the operations involving operator matrices, we discretize the covariance operator matrix $\bK = \bK_{\pi}$ on a uniform grid $\{u_{i}\}_{i=1}^{R} \subset U$ as the matrix $\mathsf{K} = [K(u_{i}, u_{j})]_{i,j=1}^{R}$ and its diagonal counterpart $\dg \bK$ as $\mathsf{D} = [\mathsf{D}_{ij}]_{i,j=1}^{R}$ where $\mathsf{D}_{ij} = K(u_{i}, u_{j})$ if $u_{i}, u_{j} \in U_{k}$ for some $1 \leq k \leq p$ and $0$ otherwise. The correlation operator $\bR$ is then computed as $\mathsf{R} = \mathsf{I} + \mathsf{R}_{0}$ where
\begin{equation*}
    \mathsf{R}_{0} = [\kappa_{n} \mathsf{I} + \tfrac{1}{R}\mathsf{D}]^{-1/2}[\mathsf{K} - \mathsf{D}][\kappa_{n} \mathsf{I} + \tfrac{1}{R}\mathsf{D}]^{-1/2} \qquad \mbox{ and } \qquad \mathsf{I} = [\delta_{ij}]_{i,j=1}^{R}.
\end{equation*} The precision operator $\bP$ is computed as $\mathsf{P} = \mathsf{I} - [\mathsf{I} + \tfrac{1}{R}\mathsf{R}_{0}]^{-1}[\tfrac{1}{R}\mathsf{R}_{0}]$, which is mathematically identical to $\mathsf{P} = \mathsf{R}^{-1}$ but numerically more stable. 

\section{Numerical Simulations} \label{sec:numerical_simulations}

In this section, we study the numerical performance of our estimation procedures at different resolutions, sample sizes, and levels of noise, for each of three possible observation regime (completely, densely, or sparsely observed curves). We consider the resolutions $p = 20, 30$ and $40$ and choose the corresponding partitions $\pi$ to be uniform over the unit interval, i.e. given by the collection of subintervals $U_{j} = [j/p, (j+1)/p)$ for $0 \leq j \leq p-1$ and $U_{p} = [(p-1)/p, 1]$.

We consider five covariances over the unit interval $U = [0, 1]$: (a) Gaussian kernel $K_{1}(u, v) = e^{-(u-v)^{2}}$, (b) Brownian motion covariance $K_{2}(u, v) = u \wedge v$, (c) integrated Brownian motion covariance $K_{3}(u, v) = \bbE[X_{u}X_{v}]$ where $X_{t} = \int_{0 \vee (t - 0.5)}^{t \wedge 1} W_{s} ~ds$, (d) P\'{o}lya covariance $K_{4}(u, v) = 0.8 \Delta_{0.7}(u-v) + 0.2\Delta_{0.8}(u-v)$ where $\Delta_{w}(t) = (1 - |t/w|)\mathbf{1}_{\{1 - |t/w| \geq 0\}}$ and (e) interpolated random vector covariance $$K_{5}(u, v) = (1 - u^{\prime})(1 - v^{\prime})\alpha^{|i - j|} + (1 - u^{\prime})v^{\prime}\alpha^{|i - j - 1|} + u^{\prime}(1 - v^{\prime})\alpha^{|i + 1 - j|} + u^{\prime}v^{\prime}\alpha^{|i - j|}$$ for $i = 1 + \lfloor u q \rfloor$, $j = 1 + \lfloor v q \rfloor$, $u^{\prime} = u - i/q$ and $v^{\prime} = v - j/q$. 

Of these, $K_{1}$ is analytic and $K_{2}$ is the covariance of a Markov process, which implies that $\Omega_{X} = \{(u, v): u = v\}$ for both these cases, with their graphs being infinitesimal strips along the diagonal. The graphs of $K_{3}$ and $K_{4}$ could not be ascertained theoretically and were computed numerically to be given by $\Omega_{X} \approx \{(u, v): |u - v| = 0 \mbox{ or } 0.5\}$ and $\Omega_{X} \approx \{(u, v): |u - v| = 0 \mbox{ or } 0.8 \} \cup \{0, 0.2, 0.8, 1\}^{2}$, respectively, by applying our method to the population versions of the covariances. The kernel $K_{3}$ serves to illustrate the effect of applying a linear filter on a Markov process on its graph, while $K_{4}$ represents an important family of covariances constructed from P\'{o}lya-type positive-definite functions, which in this case is $\Delta_{w}$. The kernel $K_{5}$ corresponds to the graph $\Omega_{X} = \{(u, v): |\lfloor qu \rfloor - \lfloor qv \rfloor| \leq 1\}$ and is the covariance of the process $X_{t}$ constructed by linearly interpolating a Gaussian random vector $\bX = (X_{1}, \dots, X_{q+1}) \in \bbR^{q+1}$ with mean zero and the covariance given by the Kac-Murdock-Szeg\"{o} matrix $\bC = [\alpha^{|i-j|}]_{i, j = 1}^{q+1}$ with the parameters $\alpha = 0.3$ and $q = 10$. It helps us verify that our method for graph recovery in continuous time conforms to our intuition for graph recovery in finite dimensions. 
Figure \ref{fig:plots_of_covariances} displays the level plots of the covariances along with the level plots of the matrix $P = [\bP_{ij}]_{i,j = 1}^{p}$ (which contains the norms of the entries of the precision matrix $\bP$) and the graphs $\tilde{\Omega}_{X}^{\pi}$.

\begin{figure}
    \centering
    \begin{subfigure}[b]{0.82\textwidth}
       \includegraphics[width=1\linewidth, trim=1.5cm 1cm 0 0, clip]{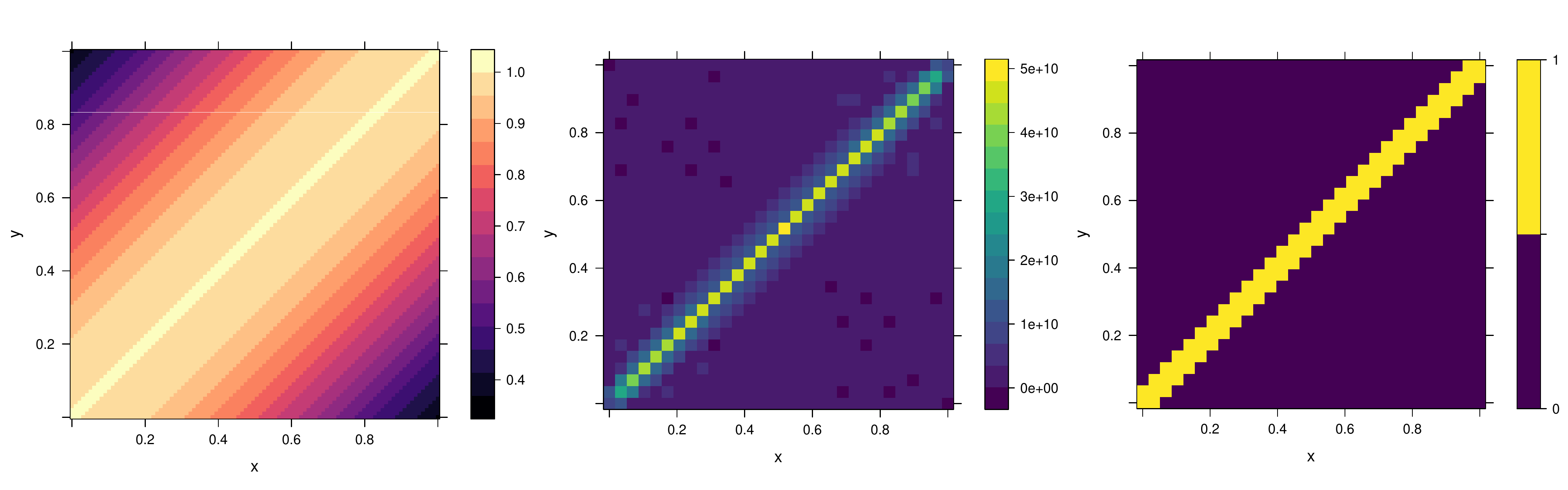}
       \caption{$K_{1}$ (Gaussian kernel)}
       \label{fig:g1} 
    \end{subfigure}    
    \begin{subfigure}[b]{0.82\textwidth}
       \includegraphics[width=1\linewidth, trim=1.5cm 1.2cm 0 0, clip]{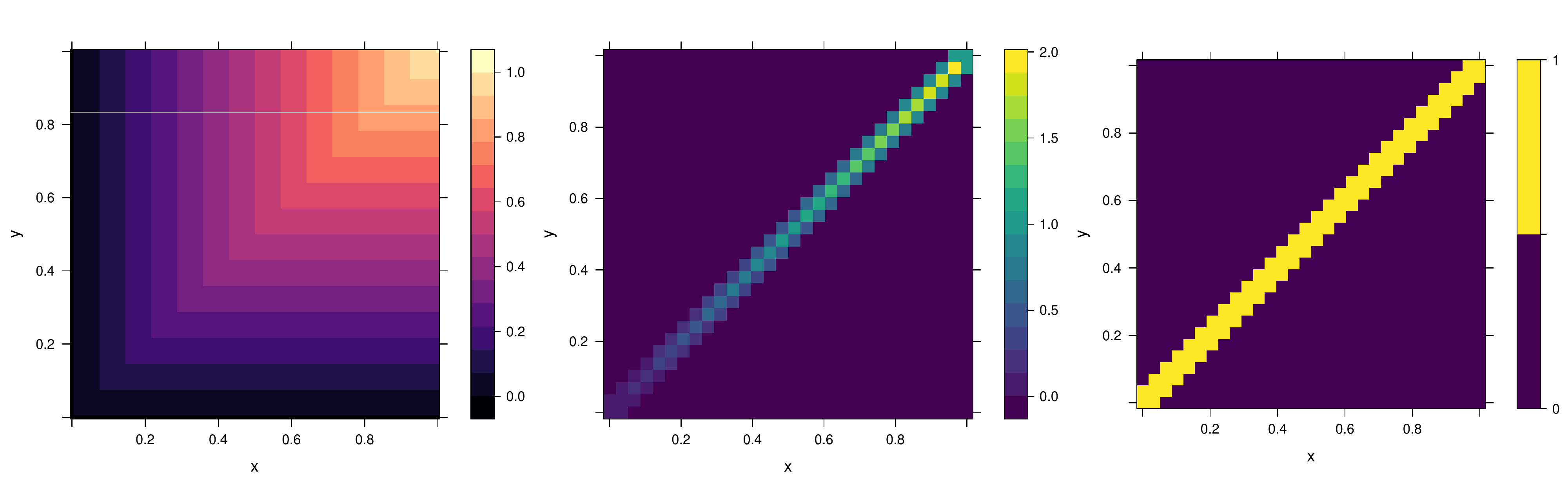}
       \caption{$K_{2}$ (Brownian motion)}
       \label{fig:g2}
    \end{subfigure}
    \begin{subfigure}[b]{0.82\textwidth}
        \includegraphics[width=1\linewidth, trim=1.5cm 1.2cm 0 0, clip]{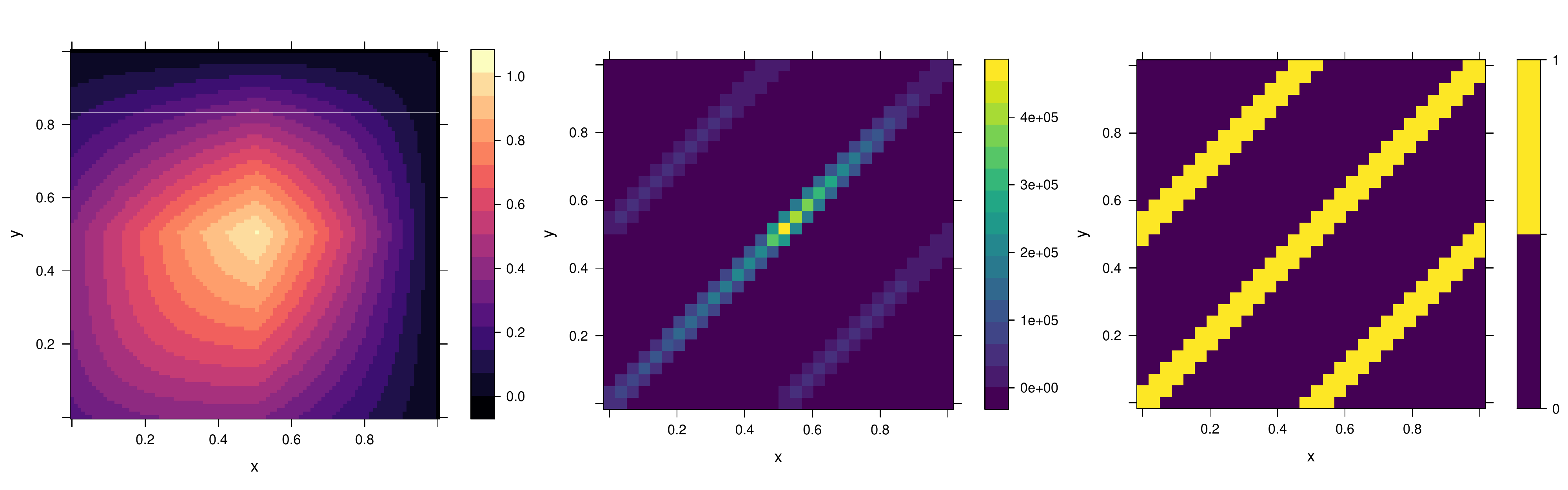}
        \caption{$K_{3}$ (Integrated Brownian motion)}
        \label{fig:g3}
     \end{subfigure}
     \begin{subfigure}[b]{0.82\textwidth}
        \includegraphics[width=1\linewidth, trim=1.5cm 1.2cm 0 0, clip]{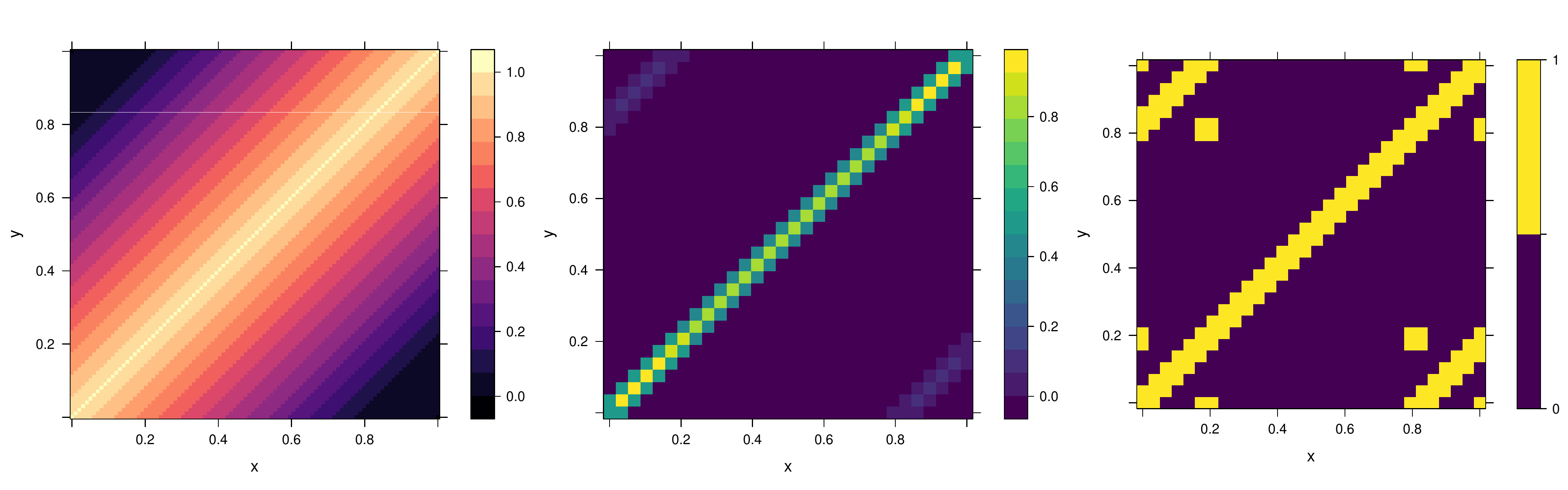}
        \caption{$K_{4}$ (P\'{o}lya covariance)}
        \label{fig:g4}
     \end{subfigure}
     \begin{subfigure}[b]{0.82\textwidth}
        \includegraphics[width=1\linewidth, trim=1.5cm 1.2cm 0 0, clip]{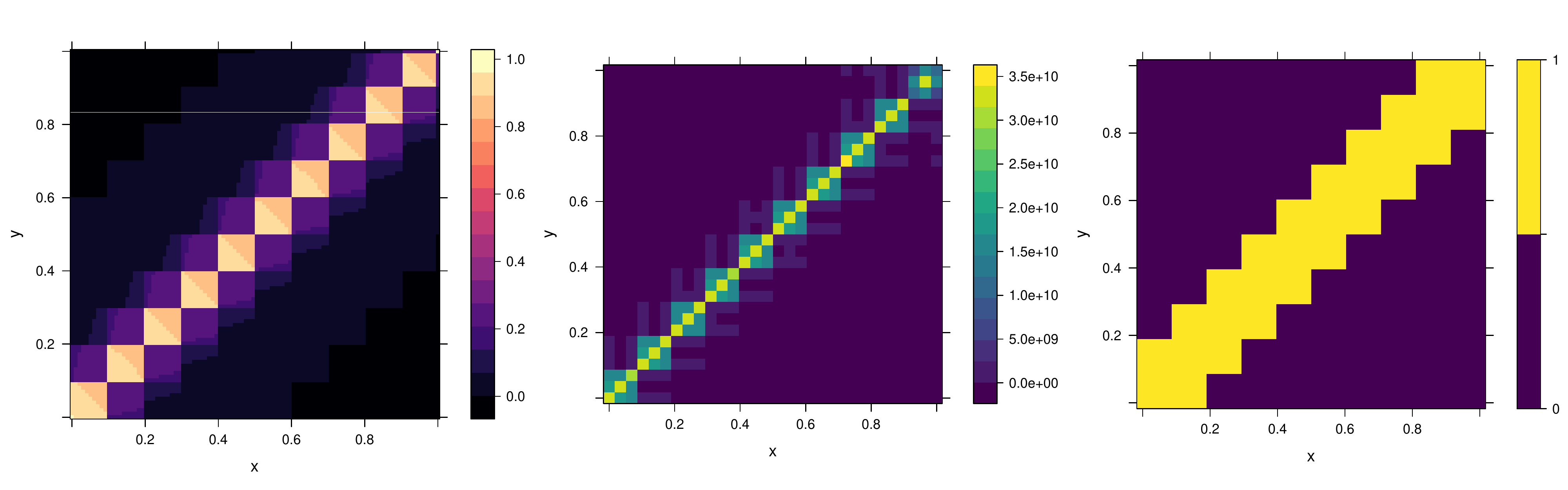}
        \caption{$K_{5}$ (Linear interpolation of random vector)}
        \label{fig:g5}
     \end{subfigure}\\[1cm]
    \caption[Two numerical solutions]{Plots of the covariance $K$ (left), the matrix of norms $P = [\| \bP_{ij}\|]_{i,j=1}^{p}$ (center) and $\Omega_{X}^{\pi}$ (right) for $K = $ (a) $K_{1}$, (b) $K_{2}$, (c) $K_{3}$, (d) $K_{4}$ and (e) $K_{5}$.}
    \label{fig:plots_of_covariances}
\end{figure}

For the covariances $K = K_{j}$, we proceed by generating independent samples from the corresponding multivariate Gaussian distribution in the manner dictated by the corresponding observation regime, and estimate the covariance $\bK$ accordingly. We calculate $\hat{\Omega}^{\pi}(\rho)$ for various values of $\rho$ using the method described in Section \ref{sec:methodology}. We compare $\hat{\Omega}^{\pi}(\rho)$ with the true $\tilde{\Omega}_{X}^{\pi}$ and calculate the true positive rate (TPR) and the false positive rate (FPR) of classifying the pixels $U_{i} \times U_{j}$ for every $\rho$ as 
\begin{equation*}
    \mathrm{ TPR}(\rho) = \frac{\#\{(i, j): U_{i} \times U_{j} \in \hat{\Omega}^{\pi}(\rho) \cap \tilde{\Omega}_{X}^{\pi} \}}{\#\{(i, j): U_{i} \times U_{j} \in \tilde{\Omega}_{X}^{\pi}\}} \mbox{ and } 
    \mathrm{ FPR}(\rho) = \frac{\#\{(i, j): U_{i} \times U_{j} \in \hat{\Omega}^{\pi}(\rho) \setminus  \tilde{\Omega}_{X}^{\pi} \}}{\#\{(i, j): U_{i} \times U_{j} \notin \tilde{\Omega}_{X}^{\pi}\}},
\end{equation*}
where $\#S$ denotes the cardinality of the set $S$.
Then we plot a receiver operating characteristic (ROC) curve as in Figure \ref{fig:roc_curves}. We calculate the area under the curve (AUC) of the ROC curve. We do this $100$ times for every combination of $K$, $p$ and $n$, and report the median and mean absolute deviation of the AUC rounded to two decimal places.

\begin{figure}
    \centering
    \includegraphics[width=0.6\textwidth]{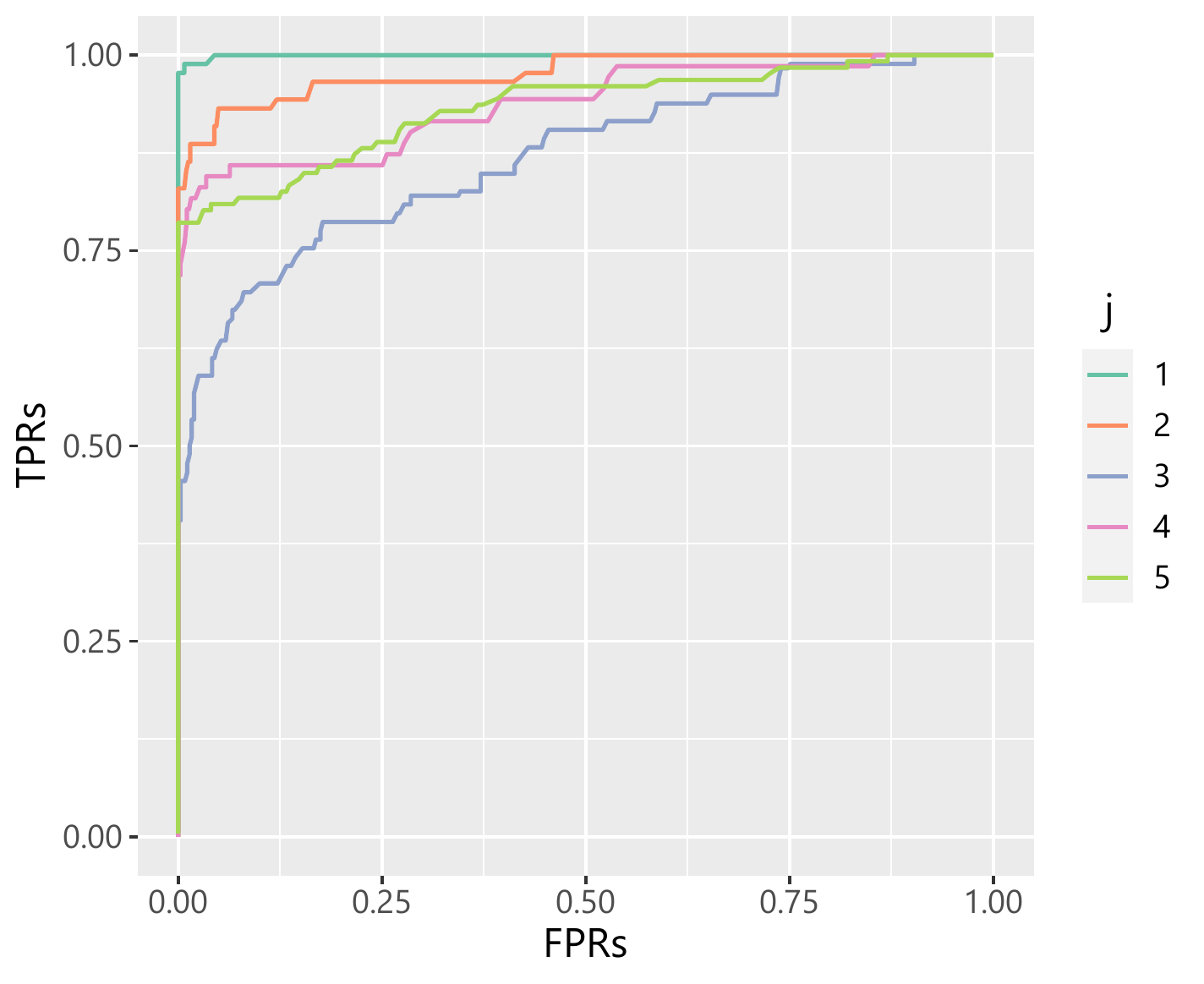}\\[0.3cm]
    \caption{Sample ROC curves for simulated instances of the covariances $K_{j}$ for $1 \leq j \leq 5$.}
    \label{fig:roc_curves}
\end{figure}

\subsection{Complete Observations}

We generate $n = 50, 100, 200$ independent samples from the multivariate Gaussian distribution with mean zero and covariance corresponding to $K = K_{j}$ on a regular grid on $U$ of length $R = 600$. We estimate $\bK$ using the empirical covariance estimator $\hbK$ and proceed as described above for partitions corresponding to the resolutions $p = 20, 30, 40$. The grid $\Lambda$ for implementing (\ref{eqn:ridge_choice}) was chosen to be $\{10^{-j}\|\dg \bK\|: 0 \leq j \leq 14\}$. The results are displayed in Table \ref{tab:simulations-compl}. 

The median AUC generally increases with sample size, as expected. However, no clear pattern is evident between the median AUC and $p$, which is understandable considering the complex nature of this relationship, as described in Section \ref{sec:complete_obs}.

\begin{table}[h]
\centering
\begin{tabular}{|| c | c || c | c | c ||} 
\hline
\multicolumn{2}{||c||}{Regime} 
&\multicolumn{3}{|c||}{Complete Observations} \Tstrut \\[2px]
\hline
\multicolumn{2}{||c||}{Parameters} 
&\multicolumn{3}{|c||}{$n$} \Tstrut \\[2px]
\hline
$K$ & $p$ &50 &100 &200 \Tstrut\\ [1ex] 
\hline\hline 
\multirow{3}{*}{$K_{1}$} 
&20 &1.00$\pm$0.00 &1.00$\pm$0.00 &1.00$\pm$0.00\Tstrut \\
&30 &1.00$\pm$0.00 &1.00$\pm$0.00 &1.00$\pm$0.00\Tstrut \\
&40 &0.99$\pm$0.00 &0.99$\pm$0.00 &0.99$\pm$0.00\Tstrut \\
\hline
\multirow{3}{*}{$K_{2}$} 
&20 &0.96$\pm$0.01 &0.97$\pm$0.01 &0.98$\pm$0.01\Tstrut \\
&30 &0.95$\pm$0.01 &0.97$\pm$0.01 &0.97$\pm$0.00\Tstrut \\
&40 &0.95$\pm$0.01 &0.96$\pm$0.01 &0.97$\pm$0.00\Tstrut \\
\hline
\multirow{3}{*}{$K_{3}$} 
&20 &0.84$\pm$0.03 &0.88$\pm$0.02 &0.88$\pm$0.02\Tstrut \\
&30 &0.86$\pm$0.02 &0.87$\pm$0.01 &0.88$\pm$0.01\Tstrut \\
&40 &0.86$\pm$0.02 &0.88$\pm$0.01 &0.88$\pm$0.01\Tstrut \\
\hline
\multirow{3}{*}{$K_{4}$} 
&20 &0.82$\pm$0.03 &0.84$\pm$0.03 &0.85$\pm$0.03\Tstrut \\
&30 &0.84$\pm$0.02 &0.87$\pm$0.02 &0.89$\pm$0.03\Tstrut \\
&40 &0.86$\pm$0.02 &0.89$\pm$0.02 &0.90$\pm$0.02\Tstrut \\
\hline
\multirow{3}{*}{$K_{5}$} 
&20 &1.00$\pm$0.00 &1.00$\pm$0.00 &1.00$\pm$0.00\Tstrut \\
&30 &1.00$\pm$0.00 &1.00$\pm$0.00 &1.00$\pm$0.00\Tstrut \\
&40 &1.00$\pm$0.00 &1.00$\pm$0.00 &1.00$\pm$0.00\Tstrut \\
\hline
\end{tabular}
\caption{\label{tab:simulations-compl} Medians $\pm$ mean absolute deviations (MAD) of area under the curve (AUC) for complete observations.}
\end{table}

\subsection{Regular Observations}

We generate $n = 50, 100, 200$ independent samples for every covariance $K = K_{j}$ but on a regular grid on $U$ of length $R = 601$ and additively perturb every entry with independent Gaussian noise of mean zero and variance $\eta \tr \bK$ for the noise levels $\eta =0$,  $0.01$ and $0.1$. The marginally larger grid size simply ensures that the estimate of the covariance can be represented as a $600 \times 600$ matrix as in the complete observations case. We estimate $\bK$ using the estimator $\hbK_{\mathrm{regular}}$ proposed in Section \ref{sec:regular_obs} and proceed as before. The grid $\Lambda$ for implementing (\ref{eqn:ridge_choice}) was chosen to be $\{10^{-\alpha}\|\dg \bK\|: \alpha = 2(\tfrac{j}{14})-1(\tfrac{14-j}{14}) \mbox{ where } 0 \leq j \leq 14\}$. The results are displayed in Table \ref{tab:simulations-reg}.  

Unsurprisingly, the median AUC tends to increase and the mean absolute deviation of AUC tends to increase as the sample size $n$ increases and decrease as the the noise level $\eta$ increases. The AUC for noiseless regular observation case ($\eta = 0\%$) is very similar to that for complete observations for most covariances with the exception of $K = K_{3}$.

\subsection{Sparse Observations}\label{sec:sparse_simulations}

We generate $n = 500$ independent samples from the multivariate Gaussian distribution on a regular grid on $U$ of length $R = 600$ and retain only $r=5$ observations for every sample after adding independent Gaussian noise of mean zero and variance $\eta \tr \bK$ for the noise levels $\eta = 0$, $0.01$ and $0.1$. We estimate $\bK$ using $\hbK_{\mathrm{sparse}}$ for $M = 20$ as proposed in Section \ref{sec:sparse_obs}, and also using the standard PACE covariance estimator $\hbK_{\mathrm{sparse}}$ (see \cite{fdapace}) for comparison. We then proceed as before for partitions corresponding to the resolutions $p = 10, 20, 30, 40$. The grid $\Lambda$ for implementing (\ref{eqn:ridge_choice}) was chosen to be same as the one for regular observations. The results are displayed in Table \ref{tab:simulations-sparse}. 

Due to the averaging effect of both $\hbK_{\mathrm{sparse}}$ and $\hbK_{\mathrm{pace}}$, the noise level does not make an appreciable difference in the AUC. The PACE covariance estimator tends to perform better for smoother covariances, as expected, and, perhaps counter-intuitively, also for larger values of $p$.


\begin{landscape}
\begin{table}[h]
\centering
\caption{\label{tab:simulations-reg} Medians $\pm$ mean absolute deviations (MAD) of area under the curve (AUC) for regular observations.}
\begin{tabular}{|| c | c || c | c | c || c | c | c || c | c | c ||} 
    \hline
    \multicolumn{2}{||c||}{Regime} 
    &\multicolumn{9}{|c||}{Regular Observations} \Tstrut \\[2px]
    \hline    
    \multicolumn{2}{||c||}{\multirow{2}{*}{Parameters}} 
    &\multicolumn{3}{|c||}{$\eta = 0\%$} &\multicolumn{3}{|c||}{$\eta = 1\%$} &\multicolumn{3}{|c||}{$\eta = 10\%$} \Tstrut \\[2px]
    \cline{3-11}
    \multicolumn{2}{||c||}{ } &\multicolumn{3}{|c||}{$n$} &\multicolumn{3}{|c||}{$n$} &\multicolumn{3}{|c||}{$n$} \Tstrut \\[2px]
    \hline
    $K$ & $p$ &50 &100 &200 &50 &100 &200 &50 &100 &200 \Tstrut\\ [1ex] 
    \hline\hline
\multirow{3}{*}{$K_{1}$} 
&20 &0.94$\pm$0.01 &0.95$\pm$0.00 &0.96$\pm$0.00 &0.94$\pm$0.01 &0.95$\pm$0.00 &0.96$\pm$0.00 &0.93$\pm$0.01 &0.94$\pm$0.00 &0.95$\pm$0.00\Tstrut \\
&30 &0.93$\pm$0.01 &0.94$\pm$0.00 &0.95$\pm$0.00 &0.93$\pm$0.01 &0.94$\pm$0.00 &0.95$\pm$0.00 &0.93$\pm$0.01 &0.94$\pm$0.00 &0.94$\pm$0.00\Tstrut \\
&40 &0.93$\pm$0.00 &0.94$\pm$0.00 &0.95$\pm$0.00 &0.93$\pm$0.00 &0.94$\pm$0.00 &0.95$\pm$0.00 &0.92$\pm$0.01 &0.93$\pm$0.01 &0.94$\pm$0.01\Tstrut \\
\hline
\multirow{3}{*}{$K_{2}$}
&20 &0.96$\pm$0.01 &0.97$\pm$0.00 &0.98$\pm$0.00 &0.96$\pm$0.01 &0.97$\pm$0.00 &0.98$\pm$0.00 &0.96$\pm$0.01 &0.96$\pm$0.00 &0.97$\pm$0.00\Tstrut \\
&30 &0.97$\pm$0.00 &0.97$\pm$0.00 &0.98$\pm$0.00 &0.97$\pm$0.00 &0.97$\pm$0.00 &0.98$\pm$0.00 &0.96$\pm$0.01 &0.97$\pm$0.00 &0.98$\pm$0.00\Tstrut \\
&40 &0.97$\pm$0.00 &0.97$\pm$0.00 &0.98$\pm$0.00 &0.97$\pm$0.00 &0.97$\pm$0.00 &0.98$\pm$0.00 &0.96$\pm$0.01 &0.97$\pm$0.00 &0.98$\pm$0.00\Tstrut \\
\hline
\multirow{3}{*}{$K_{3}$}
&20 &0.59$\pm$0.01 &0.59$\pm$0.01 &0.59$\pm$0.01 &0.59$\pm$0.01 &0.59$\pm$0.01 &0.59$\pm$0.01 &0.58$\pm$0.02 &0.58$\pm$0.01 &0.59$\pm$0.01\Tstrut \\
&30 &0.62$\pm$0.01 &0.62$\pm$0.01 &0.62$\pm$0.00 &0.62$\pm$0.01 &0.62$\pm$0.01 &0.62$\pm$0.00 &0.60$\pm$0.01 &0.61$\pm$0.01 &0.62$\pm$0.01\Tstrut \\
&40 &0.64$\pm$0.01 &0.64$\pm$0.01 &0.64$\pm$0.00 &0.63$\pm$0.01 &0.64$\pm$0.01 &0.64$\pm$0.00 &0.61$\pm$0.01 &0.63$\pm$0.01 &0.63$\pm$0.01\Tstrut \\
\hline
\multirow{3}{*}{$K_{4}$}
&20 &0.79$\pm$0.03 &0.83$\pm$0.02 &0.85$\pm$0.02 &0.79$\pm$0.03 &0.83$\pm$0.02 &0.85$\pm$0.01 &0.78$\pm$0.03 &0.83$\pm$0.02 &0.84$\pm$0.02\Tstrut \\
&30 &0.82$\pm$0.02 &0.85$\pm$0.02 &0.86$\pm$0.01 &0.82$\pm$0.02 &0.85$\pm$0.02 &0.86$\pm$0.01 &0.81$\pm$0.03 &0.84$\pm$0.02 &0.85$\pm$0.02\Tstrut \\
&40 &0.84$\pm$0.03 &0.87$\pm$0.02 &0.88$\pm$0.01 &0.84$\pm$0.02 &0.86$\pm$0.02 &0.88$\pm$0.02 &0.83$\pm$0.03 &0.86$\pm$0.02 &0.86$\pm$0.01\Tstrut \\
\hline
\multirow{3}{*}{$K_{5}$}
&20 &1.00$\pm$0.00 &1.00$\pm$0.00 &1.00$\pm$0.00 &1.00$\pm$0.00 &1.00$\pm$0.00 &1.00$\pm$0.00 &1.00$\pm$0.00 &1.00$\pm$0.00 &1.00$\pm$0.00\Tstrut \\
&30 &1.00$\pm$0.00 &1.00$\pm$0.00 &1.00$\pm$0.00 &1.00$\pm$0.00 &1.00$\pm$0.00 &1.00$\pm$0.00 &1.00$\pm$0.00 &1.00$\pm$0.00 &1.00$\pm$0.00\Tstrut \\
&40 &1.00$\pm$0.00 &1.00$\pm$0.00 &1.00$\pm$0.00 &1.00$\pm$0.00 &1.00$\pm$0.00 &1.00$\pm$0.00 &1.00$\pm$0.00 &1.00$\pm$0.00 &1.00$\pm$0.00\Tstrut \\
\hline
\end{tabular}
\end{table}
\end{landscape}

\begin{landscape}
\begin{table}[h]
\centering
\caption{\label{tab:simulations-sparse} Medians $\pm$ mean absolute deviations (MAD) of area under the curve (AUC) for sparse observations.}
\begin{tabular}{|| c | c || c | c || c | c || c | c ||} 
    \hline
    \multicolumn{2}{||c||}{Regime} 
    &\multicolumn{6}{|c||}{Sparse Observations} \Tstrut \\[2px]
    \hline    
    \multicolumn{2}{||c||}{\multirow{2}{*}{Parameters}} 
    &\multicolumn{6}{|c||}{$n = 500$} \Tstrut \\[2px]
    \cline{3-8}
    \multicolumn{2}{||c||}{ }&\multicolumn{2}{|c||}{$\eta = 0\%$} &\multicolumn{2}{|c||}{$\eta = 1\%$} &\multicolumn{2}{|c||}{$\eta = 10\%$} \Tstrut \\[2px]     
    \hline
    $K$ & $p$ &$\hbK_{\mathrm{sparse}}$ &$\hbK_{\mathrm{pace}}$ &$\hbK_{\mathrm{sparse}}$ &$\hbK_{\mathrm{pace}}$ &$\hbK_{\mathrm{sparse}}$ &$\hbK_{\mathrm{pace}}$ \Tstrut\\ [1ex] 
    \hline\hline
\multirow{4}{*}{$K_{1}$} 
&10 &0.55$\pm$0.08 &0.85$\pm$0.14 &0.56$\pm$0.08 &0.81$\pm$0.16 &0.57$\pm$0.07 &0.84$\pm$0.15 \Tstrut \\
&20 &0.74$\pm$0.03 &0.97$\pm$0.04 &0.74$\pm$0.03 &0.97$\pm$0.04 &0.73$\pm$0.03 &0.97$\pm$0.05 \Tstrut \\
&30 &0.89$\pm$0.00 &0.99$\pm$0.01 &0.89$\pm$0.00 &0.99$\pm$0.01 &0.89$\pm$0.00 &0.99$\pm$0.01 \Tstrut \\
&40 &0.91$\pm$0.00 &1.00$\pm$0.00 &0.91$\pm$0.00 &1.00$\pm$0.00 &0.91$\pm$0.00 &1.00$\pm$0.01 \Tstrut \\
\hline
\multirow{4}{*}{$K_{2}$}
&10 &0.58$\pm$0.08 &0.81$\pm$0.08 &0.59$\pm$0.08 &0.82$\pm$0.07 &0.59$\pm$0.07 &0.82$\pm$0.07 \Tstrut \\
&20 &0.65$\pm$0.03 &0.87$\pm$0.04 &0.66$\pm$0.03 &0.87$\pm$0.05 &0.66$\pm$0.03 &0.87$\pm$0.05 \Tstrut \\
&30 &0.89$\pm$0.00 &0.91$\pm$0.02 &0.89$\pm$0.00 &0.91$\pm$0.02 &0.89$\pm$0.00 &0.91$\pm$0.03 \Tstrut \\
&40 &0.91$\pm$0.00 &0.92$\pm$0.02 &0.91$\pm$0.00 &0.92$\pm$0.03 &0.91$\pm$0.00 &0.92$\pm$0.03 \Tstrut \\
\hline
\multirow{4}{*}{$K_{3}$}
&10 &0.54$\pm$0.07 &0.56$\pm$0.04 &0.53$\pm$0.06 &0.57$\pm$0.05 &0.53$\pm$0.06 &0.57$\pm$0.04 \Tstrut \\
&20 &0.58$\pm$0.01 &0.60$\pm$0.03 &0.58$\pm$0.01 &0.60$\pm$0.04 &0.58$\pm$0.01 &0.60$\pm$0.03 \Tstrut \\
&30 &0.68$\pm$0.00 &0.62$\pm$0.04 &0.68$\pm$0.00 &0.63$\pm$0.04 &0.68$\pm$0.00 &0.63$\pm$0.04 \Tstrut \\
&40 &0.69$\pm$0.00 &0.65$\pm$0.02 &0.69$\pm$0.00 &0.65$\pm$0.03 &0.69$\pm$0.00 &0.65$\pm$0.02 \Tstrut \\
\hline
\multirow{4}{*}{$K_{4}$}
&10 &0.54$\pm$0.10 &0.67$\pm$0.11 &0.54$\pm$0.10 &0.68$\pm$0.08 &0.53$\pm$0.09 &0.66$\pm$0.09 \Tstrut \\
&20 &0.64$\pm$0.02 &0.69$\pm$0.08 &0.64$\pm$0.02 &0.68$\pm$0.08 &0.63$\pm$0.02 &0.69$\pm$0.08 \Tstrut \\
&30 &0.74$\pm$0.00 &0.74$\pm$0.05 &0.74$\pm$0.00 &0.74$\pm$0.04 &0.74$\pm$0.00 &0.74$\pm$0.04 \Tstrut \\
&40 &0.75$\pm$0.00 &0.76$\pm$0.03 &0.75$\pm$0.00 &0.75$\pm$0.03 &0.76$\pm$0.00 &0.76$\pm$0.04 \Tstrut \\
\hline
\multirow{4}{*}{$K_{5}$}
&10 &0.98$\pm$0.02 &0.74$\pm$0.13 &0.98$\pm$0.03 &0.77$\pm$0.10 &0.95$\pm$0.07 &0.76$\pm$0.12 \Tstrut \\
&20 &0.84$\pm$0.04 &0.94$\pm$0.05 &0.83$\pm$0.04 &0.93$\pm$0.05 &0.84$\pm$0.04 &0.93$\pm$0.06 \Tstrut \\
&30 &0.72$\pm$0.03 &0.89$\pm$0.05 &0.72$\pm$0.03 &0.88$\pm$0.07 &0.73$\pm$0.03 &0.88$\pm$0.07 \Tstrut \\
&40 &0.67$\pm$0.02 &0.90$\pm$0.09 &0.67$\pm$0.02 &0.93$\pm$0.06 &0.67$\pm$0.02 &0.91$\pm$0.07 \Tstrut \\
\hline
\end{tabular}
\end{table}
\end{landscape}

\section{Illustrative Data Analysis}\label{sec:data_analysis}

In this section, we illustrate our method by analyzing two data sets. The first concerns infrared absorption spectra obtained from fruit purees where we expect the graph to have significant associations between distant locations. The second involves the intraday price of a certain stock where we expect the graph to resemble that of a Markov process as in Figure \ref{fig:plots_of_covariances} (a) or (b).

\subsection{Infrared Absorption Spectroscopy} 

\begin{figure}[h]
    \centering
    \includegraphics[width=0.8\textwidth, trim={0 0 0 3cm}, clip]{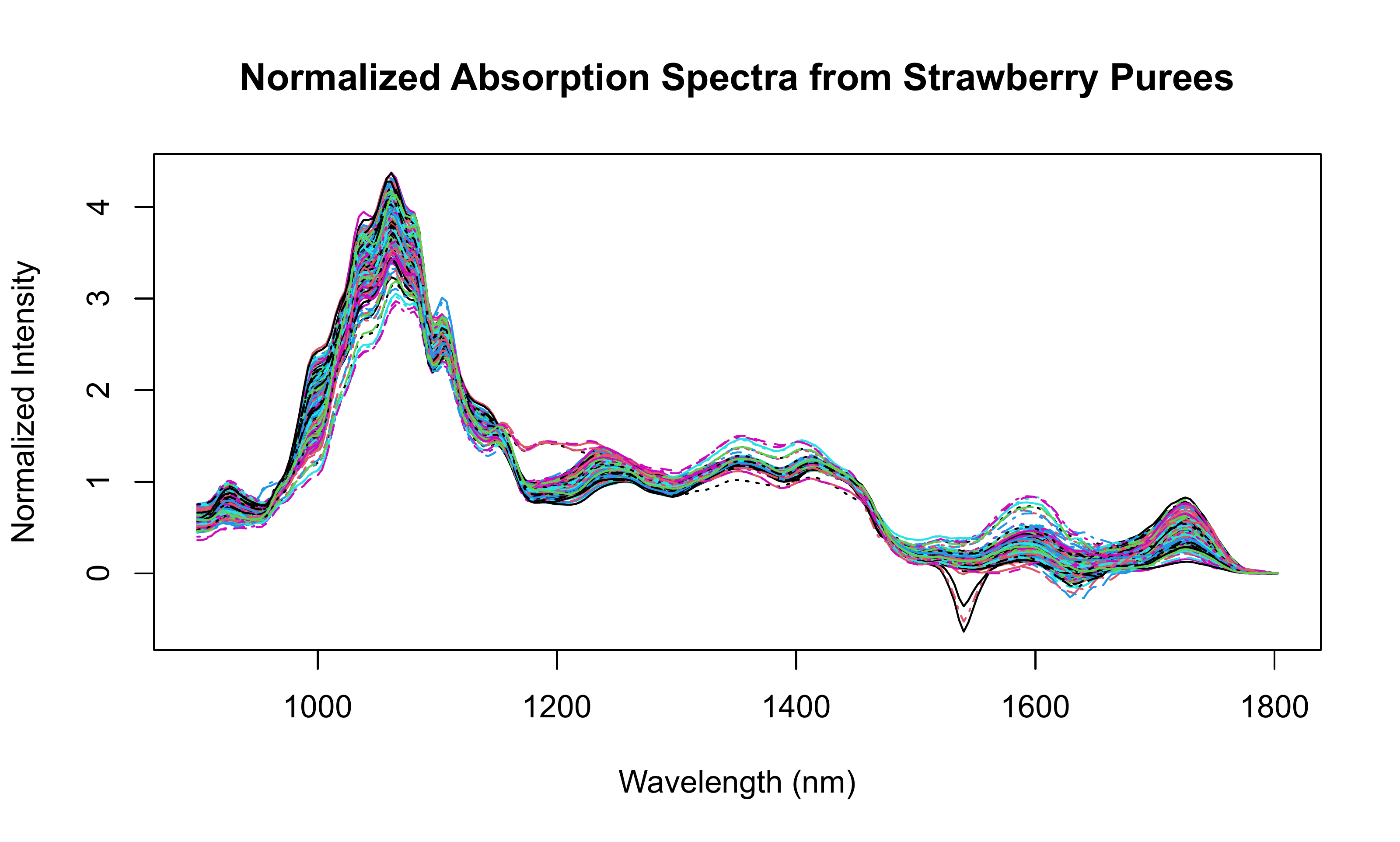}\\[0.5cm]
    \caption{Absorption spectra of strawberry purees.}
    \label{fig:plot-spectra-puree}
\end{figure}

\textcite{codazzi2022} model the absorption spectra obtained from a sample of strawberry purees as continuous functions, and produce a Bayesian inference procedure to infer the underlying dependence structure. This structure is of interest in determining the chemical composition of the puree samples. In particular, if different regions of the spectrum are related, then they might correspond to the same chemical component. The method of \textcite{codazzi2022} involves B-spline smoothing of the spectra, and uses the conditional dependence between the smoothing coefficients as a proxy for the conditional dependence structure of the spectra.   

\begin{figure}
    \centering
    \includegraphics[width=0.8\textwidth, trim={0 0 0 1.5cm}, clip]{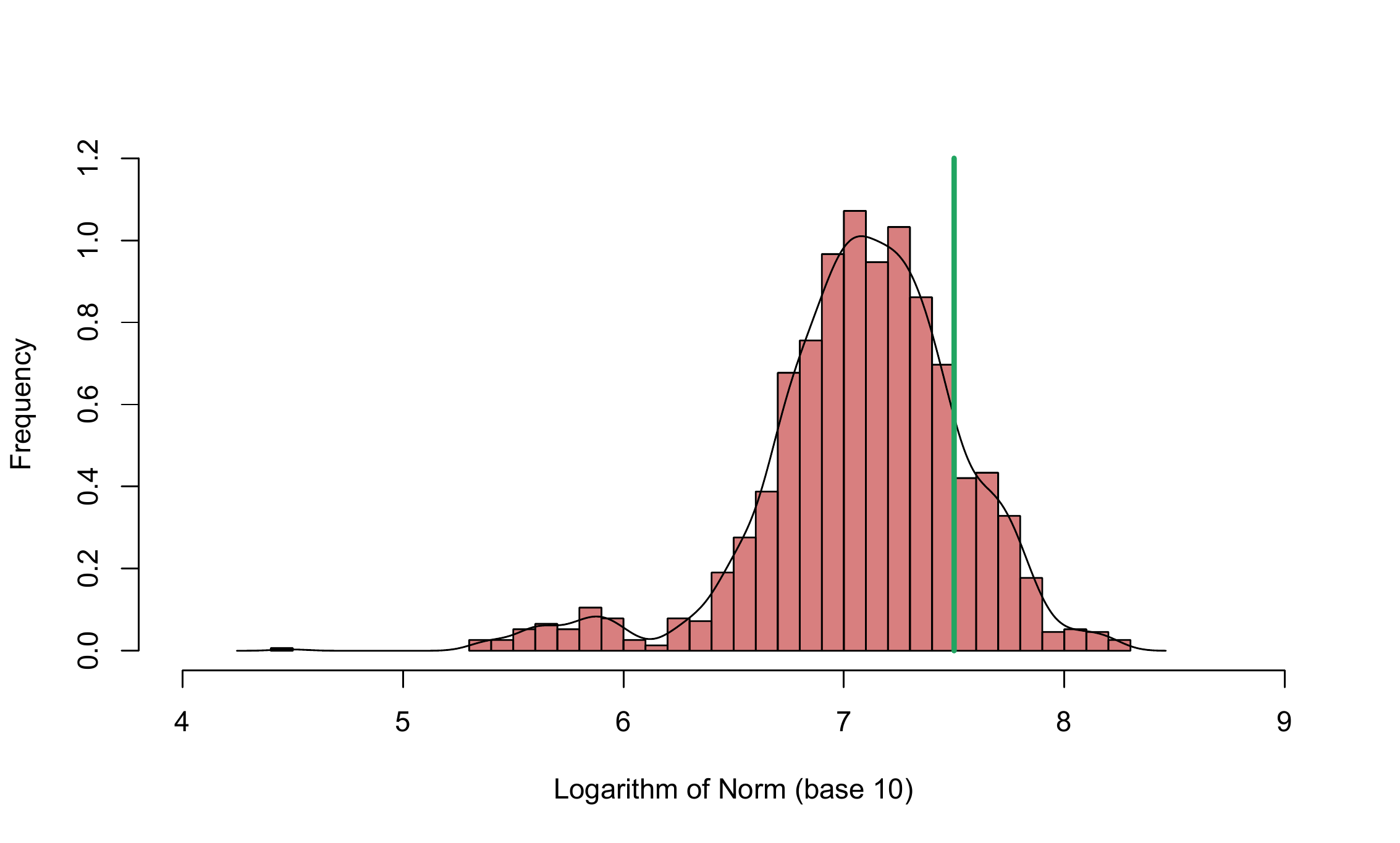}\\[0.5cm]
    \caption{Histogram and density of the log-norms $\{\log_{10} \|\hbP_{ij}\|: 1 \leq i,j \leq p\}$ for the strawberry puree data. The green line indicates the threshold $\rho$ chosen for the graph in Figure \ref{fig:plots-puree} (b). It has been manually chosen to be slightly less than the value corresponding to the elbow of the density curve which corresponds to $\rho = 10^{7.6}$.}
    \label{fig:density_puree}
\end{figure}

\begin{figure}
    \centering
    \begin{subfigure}{.48\textwidth}
        \centering
        \includegraphics[width=1.1\linewidth, trim={2.5cm 0 2.5cm 0}, clip, scale = 1]{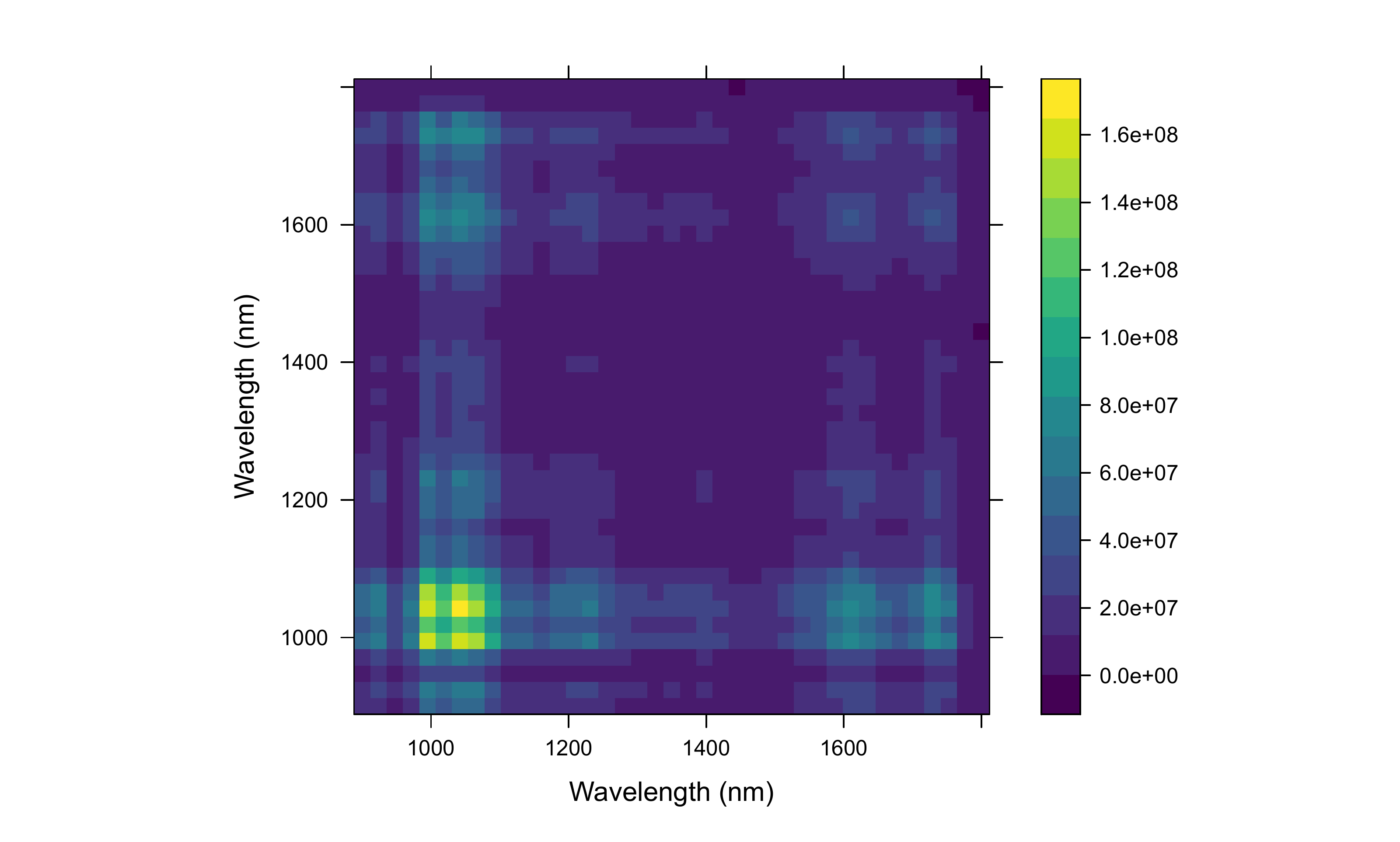}
        \caption{}
        \label{fig:mat-of-opnorms-puree}
    \end{subfigure}
    \begin{subfigure}{.48\textwidth}
        \centering
        \includegraphics[width=\linewidth, trim={2.5cm 0 2.5cm 0}, clip, scale = 1]{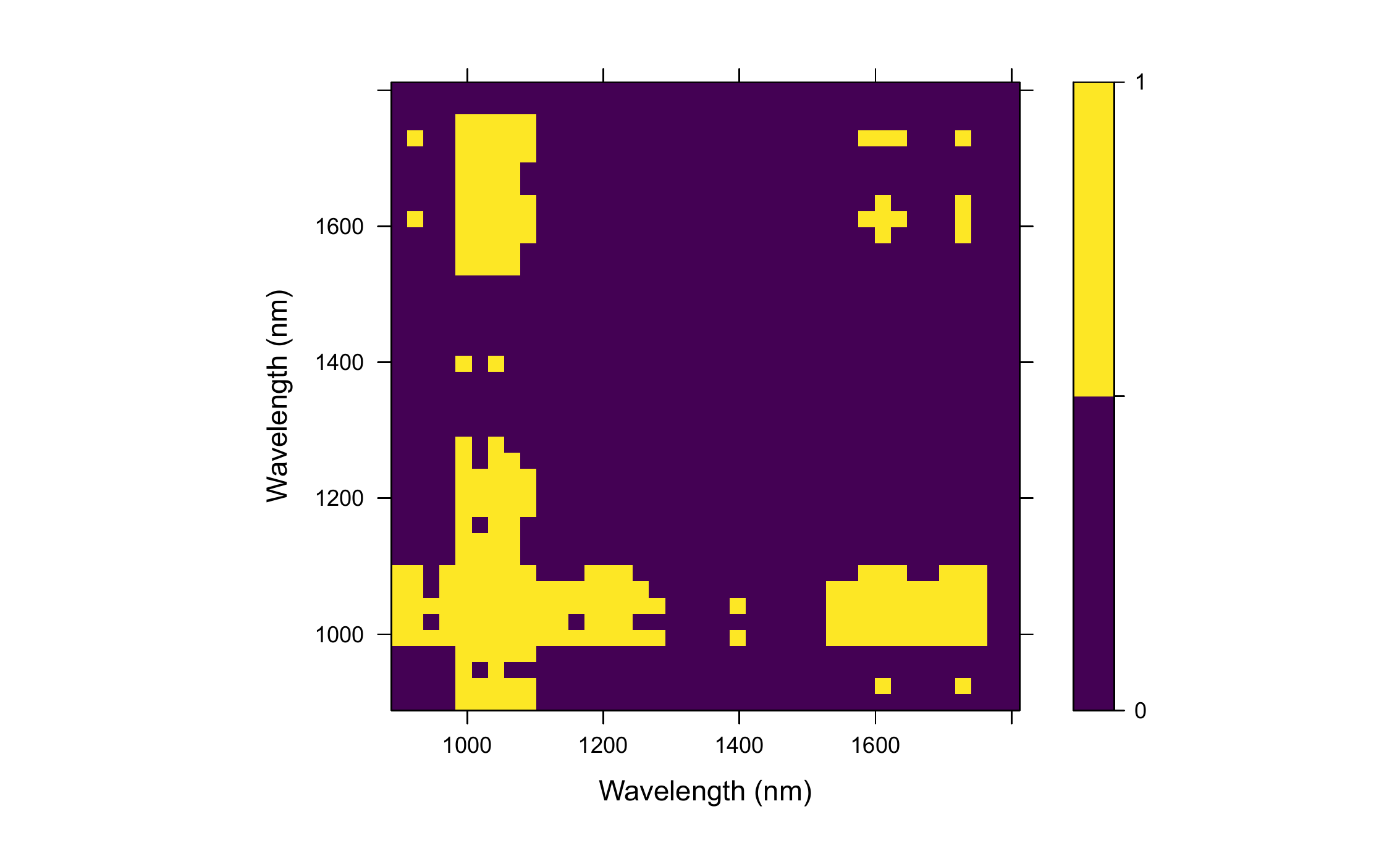}
        \caption{}
        \label{fig:graph-puree}
    \end{subfigure}\\[0.5cm]
    \caption{(a) The matrix of operator norms $[\|\bP_{ij} \|]_{i,j=1}^{p}$ and (b) the graph 
    $\tilde{\Omega}^{\pi}_{X}$ obtained for the threshold $\rho = 10^{7.5}$ for the absorption spectra of strawberry purees.}
    \label{fig:plots-puree}
\end{figure}

Using our method, we approach the problem directly. We calculate the covariance of $L^{1}$-normalized absorption spectra readings from the dataset \textcite{fruitdata2019}, obtained from $n = 351$ puree samples on a uniform grid of $235$ wavelengths on the interval $I = [899.327 \mathrm{~nm}, 1802.564 \mathrm{~nm}]$ (see Figure \ref{fig:plot-spectra-puree}). We discard the last wavelength so as to make it easier to divide the domain into $p = 39$ partitions and calculate the corresponding precision matrix, which is thresholded at a manually chosen level of $\rho = 10^{7.5}$ using the method described in Section \ref{sec:implementation} (see Figure \ref{fig:density_puree}). 
The results are summarized in Figure \ref{fig:plots-puree} and they strongly suggest the existence of chemical components corresponding simultaneously to low and high wavelengths. This corroborates the findings of \textcite{codazzi2022}, who arrive at a comparable graph structure, but using very different and more elaborate methods.

\subsection{Cumulative Log-returns for Pfizer Limited}
\begin{figure}
    \centering
    \includegraphics[width=0.8\textwidth, trim={0 0 0 1.5cm}, clip]{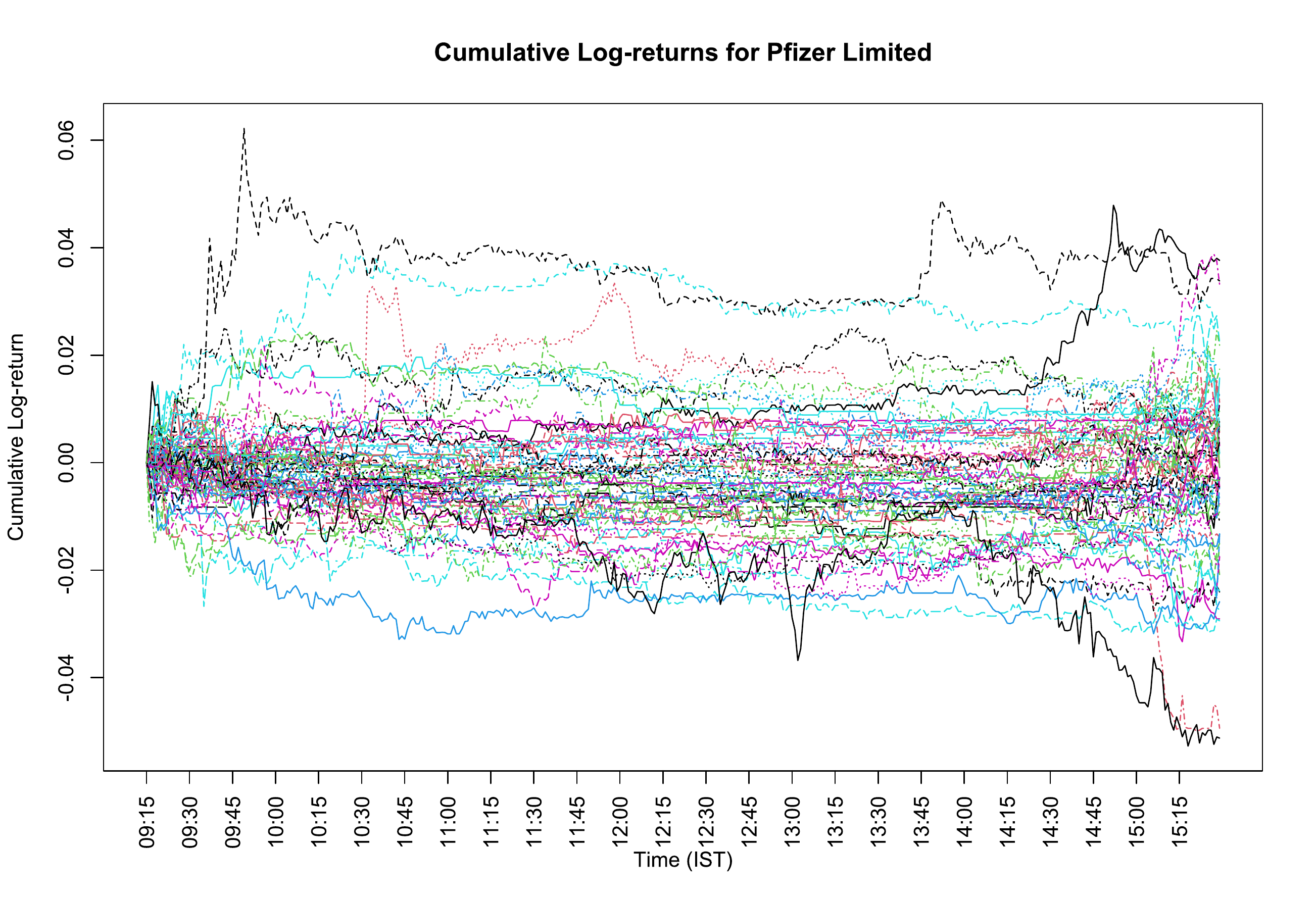}\\[0.5cm]
    \caption{Cumulative log-returns of Pfizer Limited during regular trading sessions from $2$nd January $2017$ to $1$st January $2021$.}
    \label{fig:plot-prices-pfizer}
\end{figure}

We consider the intraday price of Pfizer Limited (NSE: PFIZER) listed on India's National Stock Exchange (NSE) during $988$ regular trading sessions ($09$:$15$ AM - $15$:$30$ PM IST) from $2$nd January $2017$ to $1$st January $2021$ (see Figure \ref{fig:plot-prices-pfizer}). The data has been made freely available on Kaggle by \textcite{harshkumar}. For every day $j$, we calculate the cumulative log-returns $X_{j}(t) = \log (P_{jt}/P_{j0})$ from the closing price $P_{jt}$ of the stock for the $t$th minute on the $j$th day and the opening price $P_{j0}$ of the stock on that day (see \textcite{kokoszka2012}). Since log-returns are known to be reasonably close to being Gaussian in practice (see \textcite{tsay2005}), we can think of $\{X_{j}\}_{j=1}^{n}$ as a weakly dependent Gaussian functional time series (see \textcite{gabrys2013}). For such weakly serially dependent functional data, one can estimate the true covariance $\bK$  by way of the empirical covariance $\hbK$  at the same rate as under serial independence (see Theorem 3.1 of \textcite{horman2010}). 

On many days, the trading was halted during the session, which led to missing data. To circumvent this problem, we estimate the covariance of $X_{t}$ in a pairwise manner. The resulting estimate is almost but not exactly positive semidefinite, so we project it to the cone of positive semidefinite matrices by retaining only the positive part of its eigendecomposition. The resolution of the grid is $375$ and we choose $p = 25$. The choice of threshold using the method described in Section \ref{sec:implementation} is summarized in Figure \ref{fig:density_stock} and the kernel density estimate was automatically calculated using the density function in R (\textcite{rbase}) with default parameters as before. The results are summarized in Figure \ref{fig:plots-pfizer}. 

The graph almost exactly resembles what one would expect for a Markov process, except for a noticeable clique for times between $12$:$15$ and $13$:$45$. The almost Markov nature of the graph is to be expected since it is widely believed in the academic literature in finance that stocks are mostly efficiently priced, meaning that prices reflect all the available information concerning the stock. The existence of an edge between widely spaced times would contradict this since it would imply that earlier prices during the day have information about later prices which the prices at intermediate times do not possess. The apparent existence of a clique may or may not be an interesting feature open to financial interpretation.

\begin{figure}
    \centering
    \includegraphics[width=0.6\textwidth, trim={0 0 0 1.5cm}, clip]{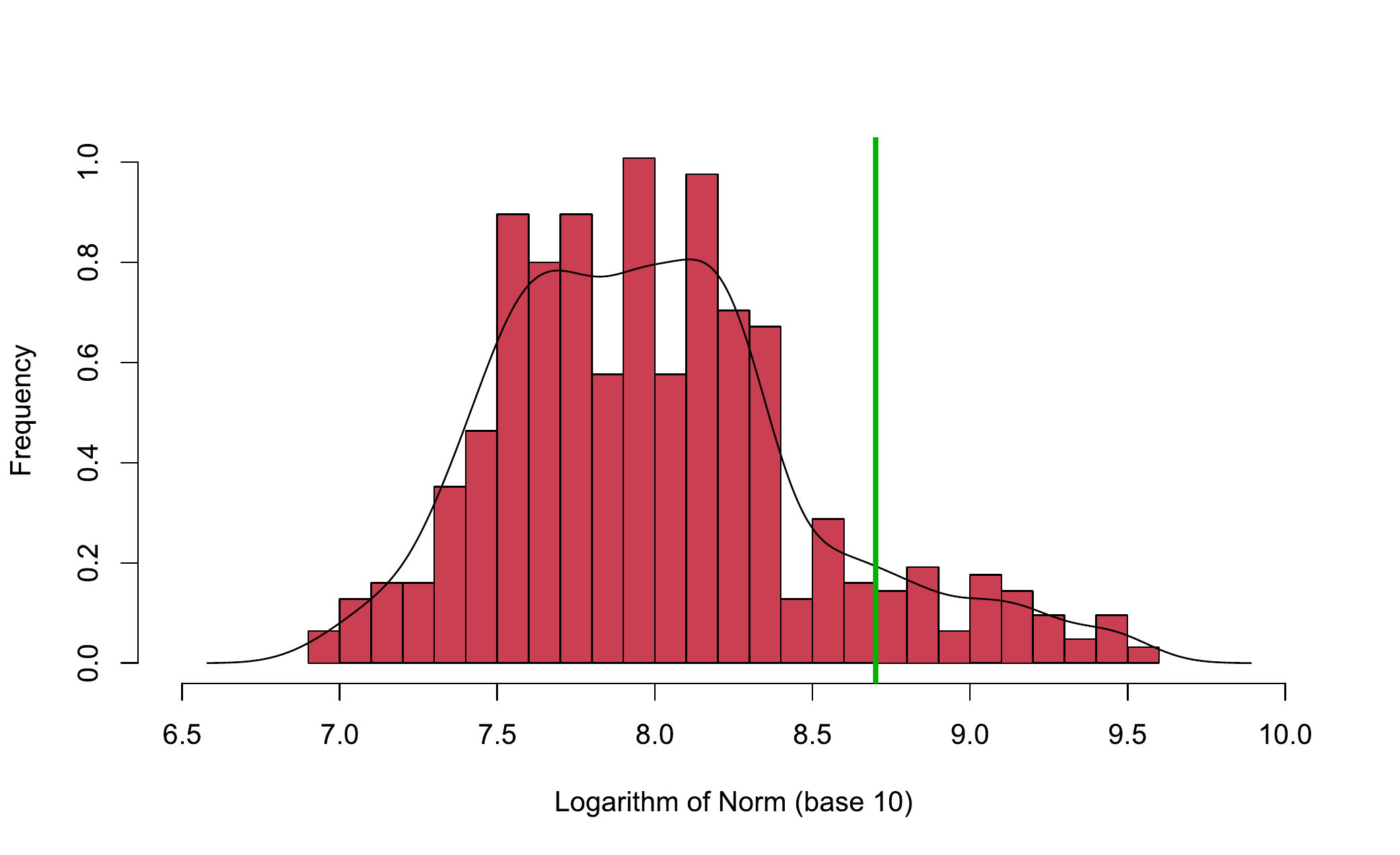}\\[0.5cm]
    \caption{Histogram and density of the log-norms $\{\log_{10}\|\hbP_{ij}\|: 1 \leq i,j \leq p\}$ for stock price data. The green line indicates the threshold $\rho$ chosen for the graph in Figure \ref{fig:plots-pfizer} (b). It has been chosen to be an elbow of the density curve which corresponds to $\rho = 10^{8.7}$}
    \label{fig:density_stock}
\end{figure}

\begin{figure}
    \centering
    \begin{subfigure}{.48\textwidth}
        \centering
        \includegraphics[width=\linewidth, trim={2.5cm 0 2.5cm 0}, clip]{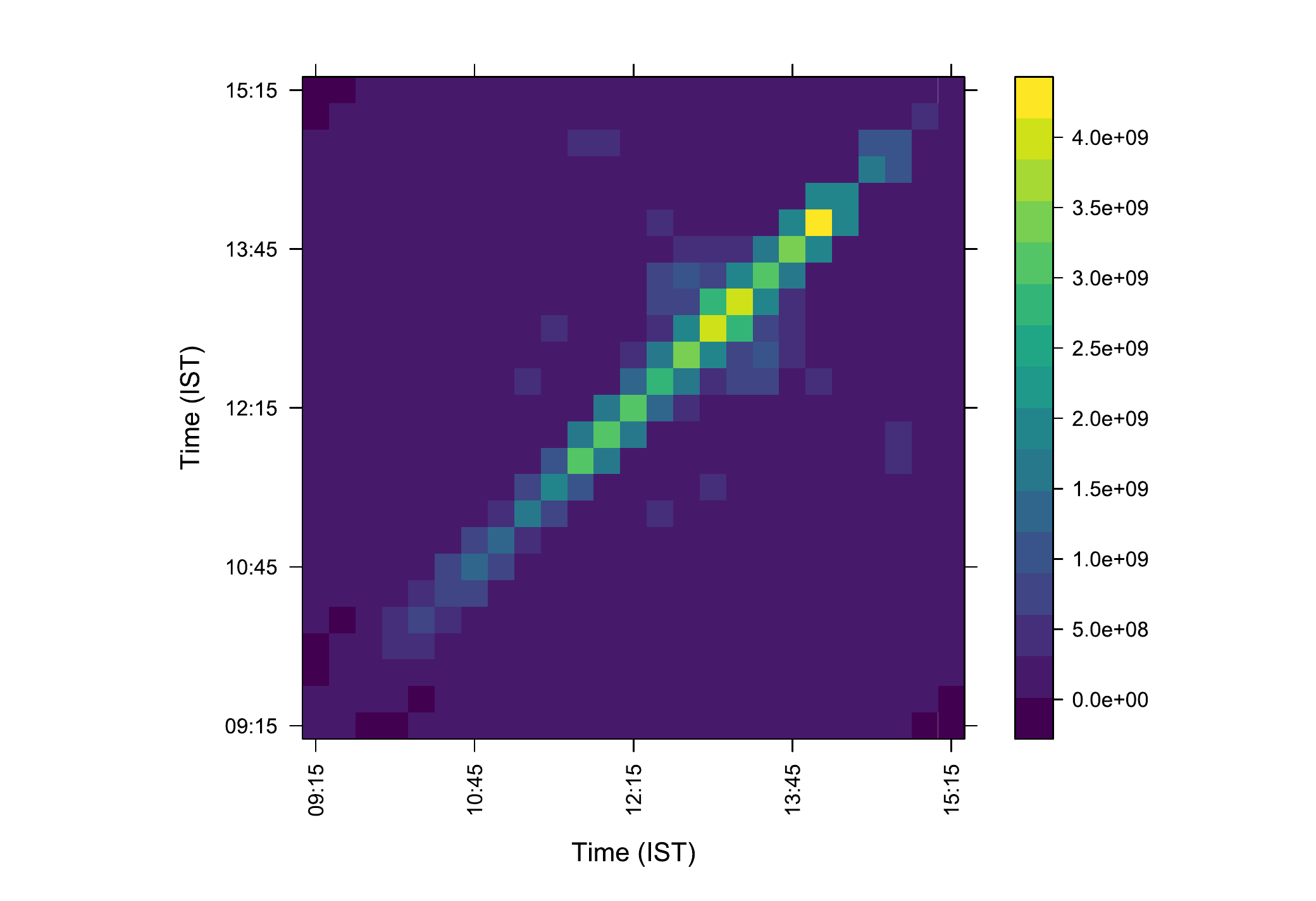}
        \caption{}
        \label{fig:mat-of-opnorms-prices}
    \end{subfigure}
    \begin{subfigure}{.48\textwidth}
        \centering
        \includegraphics[width=\linewidth, trim={2.5cm 0 2.5cm 0}, clip]{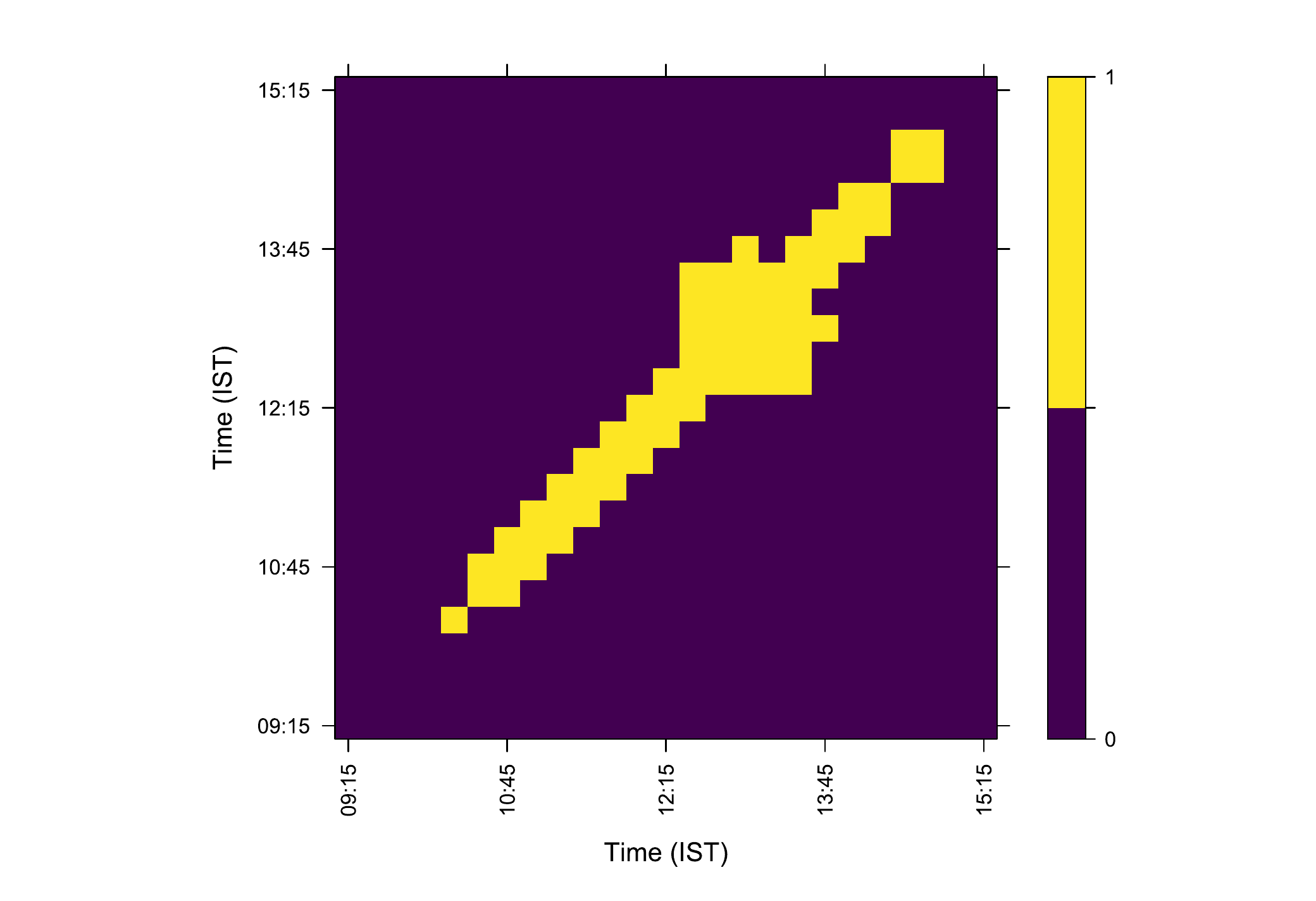}
        \caption{}
        \label{fig:graph-prices}
    \end{subfigure}\\[0.5cm]
    \caption{(a) The matrix of operator norms $[\|\bP_{ij}\|]_{i,j=1}^{p}$ and (b) the graph $\tilde{\Omega}^{\pi}_{X}$ obtained for the threshold $\rho = 10^{8.7}$ for the stock price of Pfizer Limited.}
    \label{fig:plots-pfizer}
\end{figure}

\section{Appendix}

This section collects the the proofs of the statements in the paper.

\subsection{Graphical Regularization}

\subsubsection{Approximate Inverse Zero Characterization}
\begin{proof}[Proof of Theorem \ref{thm:best_piapprx}]
    By Theorem 2.2.3 of \textcite{bakonyibook}, $\bP_{ij} = \bzero$ is equivalent to saying that
    \begin{equation}\label{eqn:sep_corrop}
        \bR_{ij} = [\bR_{ik}]_{k \in S}^{\top}[\bR_{kl}]_{k,l \in S}^{-1}[\bR_{lj}]_{l \in S}^{}
    \end{equation} 
    for $S = \{ m : m \neq i,j \}$. Through appropriate manipulations, this can be used to show that
    \begin{equation}\label{eqn:sep_matop}
        \bK_{ij} = \left(
            [\bK_{kl}]_{k,l \in S}^{-1/2}[\bK_{ki}]_{k\in S}^{\phantom{-1/2}}
            \right)^{\top}
            \left(
            [\bK_{kl}]_{k,l \in S}^{-1/2}[\bK_{lj}]_{l\in S}^{\phantom{-1/2}}
            \right).
    \end{equation}
    By Theorem 11.18. of \textcite{paulsen2016}, the above equality can be rewritten as
    \begin{equation}\label{eqn:sep_kernl}
        K(s,t) = \langle K(s, \cdot), K(\cdot, t) \rangle_{\cH(V)}
    \end{equation}
    for $s \in U_{i}$, $t \in U_{j}$ and $V = \cup_{k \in S}~ U_{k}$. It follows that $\Omega_{X} \subset (U_{i} \cup V)^{2} \cup (V \cup U_{j})^{2}$ or more simply, that $\Omega_{X}$ and $U_{i} \times U_{j}$ are disjoint. Thus implying that $U_{i} \times U_{j}$ and $\tilde{\Omega}_{X}^{\pi}$ are disjoint.

    The converse requires more work. Assume that $U_{i} \times U_{j}$ and $\tilde{\Omega}_{X}^{\pi}$ are disjoint. Now, if $x = (s, t)$ is in the closure of $U_{i} \times U_{j}$, there exists some closed $\Omega \supset \Omega_{X}$ for which (\ref{eqn:sep_kernl}) holds and $x \in \Omega^{c}$. It follows that there is an open ball $B_{x}$ centered at $x$ such that $B_{x} \subset \Omega_{c}$. The closure of $U_{i} \times U_{j}$ is contained in $\cup_{x} B_{x}$, and by compactness there exists a finite subcover $\cup_{i=1}^{q} B_{x_{i}}$. We now show that there exists a partition $\pi'$ of $U$ such that every pixel associated with $\pi'$ lies in one of the balls $B_{x_{i}}$.    
    
    Define the function $d: U_{i} \times U_{j} \to \bbR_{+}$ as
    \begin{equation*}
        d(x) = \max \{ d(x, B_{x_{i}}^{c}): x \in B_{x_{i}} \}.
    \end{equation*}
    Alternatively, $d$ maps every $x$ to the maximum of its distance from the set $B_{x_{i}}^{c}$ for every $i$ such that $x \in B_{x_{i}}$. Observe that $R = \inf_{x} d(x) > 0$. So long as we partition $U$ such that every pixel $U_{k}' \times U'_{l}$ satisfies that the maximum distance between two points in it is less than $R/2$, every pixel will be contained entirely in one of the balls $B_{x_{i}}$. 

    The precision operator $\bP' = \bP_{\pi'}$ corresponding to this new partition $\pi'$ satisfies $\bP'_{i'j'} = \bzero$ for every $i', j'$ corresponding to a pixel contained in in the closure of $U_{i} \times U_{j}$. Since such operators $\bP'_{i'j'}$ can be considered together as an operator, we can write the $\pi'$-analogue of (\ref{eqn:sep_corrop}) and work our way to (\ref{eqn:sep_kernl}) using appropriate manipulations. But (\ref{eqn:sep_kernl}) is partition independent, we can work our way backwards, this time for $\pi$ instead of $\pi'$ and derive that $\bP_{ij} = \bzero$. This completes the proof.
\end{proof}

\subsubsection{Identifiability}

\begin{proof}[Proof of Corollary \ref{cor:identifiability}]
    The first part is a tautology. For the second part, notice that for some $\epsilon_{\pi} > 0$, we can write with a slight abuse of notation that the set $\cap_{\epsilon > 0} (\Omega_{X} + \bbB_{\epsilon})^{\pi}$ is equal to $(\Omega_{X} + \bbB_{\epsilon})^{\pi}$ if $\epsilon < \epsilon_{\pi}$. Thus for $\epsilon < \epsilon_{\pi_{1}} \wedge \epsilon_{\pi_{2}}$ we have 
    \begin{eqnarray*}
        &\Bigl[\cap_{\epsilon > 0} (\Omega_{X} + \bbB_{\epsilon})^{\pi_{1}}\Bigr] \cap \Bigl[\cap_{\epsilon > 0} (\Omega_{X} + \bbB_{\epsilon})^{\pi_{2}}\Bigr] 
        &= (\Omega_{X} + \bbB_{\epsilon})^{\pi_{1}} \cap (\Omega_{X} + \bbB_{\epsilon})^{\pi_{2}} \\
        &&= (\Omega_{X} + \bbB_{\epsilon})^{\pi_{1} \wedge \pi_{2}}\\
        &&= \cap_{\epsilon > 0}(\Omega_{X} + \bbB_{\epsilon})^{\pi_{1} \wedge \pi_{2}}.
    \end{eqnarray*}
    It follows that $\cap_{j = 1}^{\infty} \tilde{\Omega}_{X}^{\pi_{j}} = \lim_{k \to \infty} \tilde{\Omega}_{X}^{ \wedge_{j = 1}^{k}\pi_{j}}$. If $(u, v) \in U \times U$ is not contained in the closure of $\Omega_{X}$, then for a small enough $\delta > 0$ the $\delta$-ball $(u,v) + \bbB_{\delta}$ does not intersect with the closure of $\Omega$. For a sufficiently large $k$, there will be a pixel induced by $\wedge_{j = 1}^{k}\pi_{j}$ containing $(u,v)$ and which is itself contained in the $\delta$-ball, for otherwise this would imply that the partitions do not separate points. For a small enough $\epsilon > 0$, this pixel will not be included in $(\Omega_{X} + \bbB_{\epsilon})^{\wedge_{j = 1}^{k}\pi_{j}}$. It can be worked out from the zero entries of the operator matrices $\bP_{\pi_{j}}$ for $1 \leq j \leq k$ that this pixel and hence the point is indeed not contained in the closure of ${\Omega}_{X}$. Similarly, if $(u, v)$ is in the closure of $\Omega_{X}$ we can show that no pixel containing it will ever be rejected by a finite number of precision operator matrices $\bP_{j}$. This establishes the claim.
\end{proof}

\subsection{Estimation of the Precision Operator Matrix}

\subsubsection{Correlation Operator Matrix}
\begin{proof}[Proof of Theorem \ref{thm:error_corr}]    
    We decompose the difference $\hbR - \bR$ into approximation and estimation terms as follows
    \begin{equation*}
        \hbR - \bR = \hbR - \bR_{e} + \bR_{e} - \bR
    \end{equation*}
    where $\bR_{e} = \bI + [\epsilon\bI + \dg \bK]^{-1/2}\bK_{0}[\epsilon\bI + \dg \bK]^{-1/2}$. By Lemma \ref{lem:estmn_error} and \ref{lem:approx_error} it follows that
    \begin{equation*}
        \|\hbR-\bR\| \leq 5\| \bR \| \left[\frac{\|\hbK - \bK\|^{2}}{\epsilon^{2}} + \frac{\|\hbK - \bK\|}{\epsilon} \right] + 2\epsilon^{\beta} \cdot \|\Phi_{0}\| \cdot \|\bK\|^{\beta}
    \end{equation*}
    Choosing $\epsilon = \|\hbK-\bK\|^{\frac{1}{\beta+1}}$ gives 
    \begin{equation*}
        \|\hbR-\bR\| \leq 10(\|\bR\|\vee\|\Phi_{0}\|\|\bK\|^{\beta}) \cdot \|\hbK - \bK\|^{\frac{\beta}{\beta+1}}
    \end{equation*}
    Similarly, for the case $\beta > 1$, we can choose $\epsilon = \|\hbK-\bK\|^{\frac{1}{2}}$ and argue likewise to conclude that
    \begin{equation*}
        \|\hbR-\bR\| \leq 10(\|\bR\|\vee\|\Phi_{0}\|\|\bK\|^{2\beta-1}) \cdot \|\hbK - \bK\|^{\frac{1}{2}}.
    \end{equation*}
\end{proof}

\begin{lemma}\label{lem:estmn_error}
    We have
	\begin{equation*}
		\| \hbR - \bR_{e} \| \leq 5\| \bR \| \left[\frac{\|\hbK - \bK\|^{2}}{\epsilon^{2}} + \frac{\|\hbK - \bK\|}{\epsilon} \right]
	\end{equation*}
\end{lemma}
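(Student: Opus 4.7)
The plan is to decompose $\hbR - \bR_{e}$ into three natural operator differences and control each by inserting $\bA^{-1}\bA = \bI$ to expose the already-regularized quantity $\bR_{e} - \bI = \bA\bK_{0}\bA$, whose norm can be bounded in terms of $\|\bR\|$ (rather than the uncontrollable $\|\bK_{0}\|$). Write $\bB := \epsilon\bI + \dg\bK$, $\hbB := \epsilon\bI + \dg\hbK$, $\bA = \bB^{-1/2}$, $\hbA = \hbB^{-1/2}$, and $\Delta := \hbK - \bK$. I would first add and subtract to obtain
$$
\hbR - \bR_{e} = \hbA(\hbK_{0} - \bK_{0})\hbA + (\hbA - \bA)\bK_{0}\hbA + \bA\bK_{0}(\hbA - \bA).
$$
The opening term is bounded directly by $\|\hbA\|^{2}\|\hbK_{0} - \bK_{0}\| \le 2\|\Delta\|/\epsilon$, using the contractivity of the block-diagonal projection to get $\|\hbK_{0}-\bK_{0}\|\le\|\Delta\|+\|\dg\Delta\|\le 2\|\Delta\|$.

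For the two outer terms the pivotal algebraic move is to insert $\bA^{-1}\bA=\bI$ on each side of $\bK_{0}$:
$$
(\hbA-\bA)\bK_{0}\hbA = (\hbA\bA^{-1}-\bI)(\bR_{e}-\bI)(\bA^{-1}\hbA), \qquad \bA\bK_{0}(\hbA-\bA) = (\bR_{e}-\bI)(\bA^{-1}\hbA-\bI),
$$
so that $\bK_{0}$ is replaced by $\bR_{e}-\bI$. To bound the latter, I would use the factorization $\bR_{e}-\bI = \bG(\bR-\bI)\bG$ with $\bG := \bB^{-1/2}(\dg\bK)^{1/2}$; functional calculus on the commuting operators $\bB$ and $\dg\bK$ yields $\|\bG\|\le 1$, and since $\bR$ is positive with block-diagonal identity we have $\|\bR\|\ge 1$ and the spectrum of $\bR-\bI$ lies in $[-1,\|\bR\|-1]$, so $\|\bR-\bI\|\le \max(1,\|\bR\|-1)\le\|\bR\|$. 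Hence $\|\bR_{e}-\bI\|\le\|\bR\|$.

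The technical heart is bounding $\|\hbA\bA^{-1}-\bI\| = \|\hbB^{-1/2}(\bB^{1/2}-\hbB^{1/2})\|$. For this I would invoke the operator square-root perturbation inequality $\|\hbB^{1/2}-\bB^{1/2}\| \le \|\hbB-\bB\|/(2\sqrt{\epsilon})$, which follows from the integral representation $\bB^{1/2} = \tfrac{1}{\pi}\int_{0}^{\infty}\lambda^{-1/2}\,\bB(\lambda\bI+\bB)^{-1}d\lambda$, the resolvent identity, and $\|(\lambda\bI+\bB)^{-1}\|,\|(\lambda\bI+\hbB)^{-1}\|\le (\lambda+\epsilon)^{-1}$, evaluating the resulting Beta-type integral. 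Together with $\|\hbB-\bB\|=\|\dg\Delta\|\le\|\Delta\|$ and $\|\hbB^{-1/2}\|\le\epsilon^{-1/2}$ this gives $\|\hbA\bA^{-1}-\bI\|\le\|\Delta\|/(2\epsilon)$. The adjoint relation $\bA^{-1}\hbA = (\hbA\bA^{-1})^{*}$ then yields $\|\bA^{-1}\hbA-\bI\|\le \|\Delta\|/(2\epsilon)$ and $\|\bA^{-1}\hbA\|\le 1+\|\Delta\|/(2\epsilon)$.

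Assembling the three bounds and using $\|\bR\|\ge 1$,
$$
\|\hbR-\bR_{e}\| \le \tfrac{2\|\Delta\|}{\epsilon} + \|\bR\|\tfrac{\|\Delta\|}{2\epsilon}\bigl(1+\tfrac{\|\Delta\|}{2\epsilon}\bigr) + \|\bR\|\tfrac{\|\Delta\|}{2\epsilon} \le 5\|\bR\|\bigl(\tfrac{\|\Delta\|^{2}}{\epsilon^{2}}+\tfrac{\|\Delta\|}{\epsilon}\bigr),
$$
which is the target. The main obstacle is non-commutativity: since $\bA$ and $\hbA$ do not commute, the $\bA^{-1}\bA$ insertion must be paired with a bona fide operator-valued square-root perturbation bound rather than a naive scalar manipulation, and one must verify that the unbounded-in-$\epsilon$ factors $\bA^{-1}=\bB^{1/2}$ only ever appear in the tame combinations $\hbA\bA^{-1}-\bI$ and $\bA^{-1}\hbA$, which the insertion trick guarantees.
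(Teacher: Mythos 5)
Your proof is correct, and it follows a genuinely different route from the paper's. The paper expands $\hbR-\bR_{e}$ into a \emph{five}-term telescoping sum, substitutes $\bK_{0}=[\dg\bK]^{1/2}\bR_{0}[\dg\bK]^{1/2}$, and then controls the two resulting building blocks $\bD=[\epsilon\bI+\dg\hbK]^{-1/2}-[\epsilon\bI+\dg\bK]^{-1/2}$ and $\bA=\bD[\dg\bK]^{1/2}$ via an explicit algebraic resolvent-type identity (Lemma \ref{lem:approx_terms}), obtaining $\|\bD\|\leq\|\dg\hbK-\dg\bK\|/\epsilon^{3/2}$ and $\|\bA\|\leq\|\dg\hbK-\dg\bK\|/\epsilon$; the constant $5$ comes from $2\|\bR_{0}\|+1\leq 5\|\bR\|$. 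You instead use a three-term decomposition and tame $\bK_{0}$ by inserting $\bA^{-1}\bA=\bI$, which converts the outer terms into products of $\hbA\bA^{-1}-\bI$ (and its adjoint) with $\bR_{e}-\bI$; everything then reduces to the single square-root perturbation inequality $\|\hbB^{1/2}-\bB^{1/2}\|\leq\|\hbB-\bB\|/(2\sqrt{\epsilon})$, proved by the integral representation (or equivalently a Sylvester-equation argument), together with the congruence bound $\|\bR_{e}-\bI\|\leq\|\bR_{0}\|\leq\|\bR\|$, the last step using positivity of $\bR$ to sharpen the paper's $\|\bR_{0}\|\leq\|\bR\|+1$. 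Your approach buys a cleaner modular structure (one reusable perturbation lemma instead of two bespoke estimates) and slightly smaller intermediate constants; the paper's buys elementary self-containedness, avoiding the operator-monotonicity/integral-representation machinery. Both arguments share the same implicit caveat that $\epsilon\bI+\dg\hbK$ must be (strictly) positive for $\|[\epsilon\bI+\dg\hbK]^{-1/2}\|\leq\epsilon^{-1/2}$ and for your resolvent bound $\|(\lambda\bI+\hbB)^{-1}\|\leq(\lambda+\epsilon)^{-1}$; the paper defers this to Remark \ref{rem:nonpos_estmtr}, and your argument would need the analogous adjustment there.
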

\begin{proof}
    The following equation can be verified with some calculation.
    \begin{eqnarray*}
		&\hbR - \bR_{e}
		&= \left[ [\epsilon\bI + \dg\hbK]^{-1/2} - [\epsilon\bI + \dg\bK]^{-1/2} \right][\hbK_{0} - \bK_{0}][\epsilon\bI + \dg\hbK]^{-1/2} \\
		& &+\quad \left[ [\epsilon\bI + \dg\hbK]^{-1/2} - [\epsilon\bI + \dg\bK]^{-1/2} \right]\bK_{0}\left[[\epsilon\bI + \dg\hbK]^{-1/2} - [\epsilon\bI + \dg\bK]^{-1/2}\right] \\
		&&+\quad \left[ [\epsilon\bI + \dg\hbK]^{-1/2} - [\epsilon\bI + \dg\bK]^{-1/2} \right]\bK_{0}[\epsilon\bI + \dg\bK]^{-1/2} \\
		&&+\quad [\epsilon\bI + \dg\bK]^{-1/2}[\hbK_{0} - \bK_{0}][\epsilon\bI + \dg\hbK]^{-1/2} \\
		&&+\quad [\epsilon\bI + \dg\bK]^{-1/2}\bK_{0}\left[[\epsilon\bI + \dg\hbK]^{-1/2} - [\epsilon\bI + \dg\bK]^{-1/2}\right]
	\end{eqnarray*}
    Using $\bK = [\dg\bK]^{1/2}\bR[\dg\bK]^{1/2}$ we can write this expansion as
	\begin{eqnarray*}
		&&= \bD[\hbK_{0} - \bK_{0}][\epsilon\bI + \dg\hbK]^{-1/2} + \bA\bR_{0}\bA^{\ast} + \bA\bR_{0}[\dg\bK]^{1/2}[\epsilon\bI + \dg\bK]^{-1/2} \\
		&&+\quad [\epsilon\bI + \dg\bK]^{-1/2}[\hbK_{0} - \bK_{0}][\epsilon\bI + \dg\hbK]^{-1/2} + [\epsilon\bI + \dg\bK]^{-1/2}[\dg\bK]^{1/2}\bR_{0}\bA^{\ast}
	\end{eqnarray*}
    where
	\begin{eqnarray*}
        &\bR_{0} &= \bR - \bI\\
		&\bD &= [\epsilon\bI + \dg\hbK]^{-1/2} - [\epsilon\bI + \dg\bK]^{-1/2} \\
		&\bA &= \left[ [\epsilon\bI + \dg\hbK]^{-1/2} - [\epsilon\bI + \dg\bK]^{-1/2} \right][\dg\bK]^{1/2}.
	\end{eqnarray*}
    So,
	\begin{eqnarray*}
		&\| \hbR - \bR_{e} \| 
		&\leq \| \bD \|\cdot \|\hbK_{0} - \bK_{0}\| \cdot \frac{1}{\sqrt{\epsilon}} + \| \bA \| \cdot \|\bR_{0}\| \cdot \| \bA \| + \| \bA\| \cdot \|\bR_{0}\| \cdot 1 \\
		&&\qquad+\quad \frac{1}{\sqrt{\epsilon}} \cdot \| \hbK_{0} - \bK_{0} \| \cdot \frac{1}{\sqrt{\epsilon}} + 1 \cdot \|\bR_{0}\| \cdot \| \bA \|.
	\end{eqnarray*}
    Applying Lemma \ref{lem:approx_terms} (also see Remark \ref{rem:nonpos_estmtr}) to $\hbA = \dg \hbK$ and $\bA = \dg \bK$, we derive 
    \begin{equation*}
        \|\bD\| \leq \|\dg \hbK - \dg \bK\|/\epsilon^{3/2} \qquad \mbox{and} \qquad \|\bA\| \leq \|\dg \hbK - \dg \bK\|/\epsilon.
    \end{equation*}    
    Using the simple observation that
    \begin{eqnarray*}
        &\|\dg\bA\|  &= \max_{i} \|\bA_{ii}\| \leq \|\bA\| \\
        &\|\bA_{0}\| &= \|\bA - \dg\bA\| \leq \|\bA\| + \|\dg\bA\| \leq 2 \|\bA\|
    \end{eqnarray*}
    we can write
	\begin{eqnarray*}
		&\|\hbR - \bR_{e}\| 
		&\leq  \frac{\|\dg\hbK - \dg\bK\|\|\hbK_{0} - \bK_{0}\|}{\epsilon^{2}} 
        +~ \|\bR_{0}\| \frac{\|\dg\hbK - \dg\bK\|^{2}}{\epsilon^{2}} \\
        &&+~ \|\bR_{0}\|\frac{\|\dg\hbK - \dg\bK\|}{\epsilon} + \frac{\|\hbK_{0} - \bK_{0}\|}{\epsilon} + \|\bR_{0}\|\frac{\|\dg\hbK - \dg\bK\|}{\epsilon} \\
        &&\leq \frac{\|\hbK - \bK\|^{2}}{\epsilon^{2}} 
        + \|\bR_{0}\| \frac{\|\hbK - \bK\|^{2}}{\epsilon^{2}} \\
        &&+ \|\bR_{0}\|\frac{\|\hbK - \bK\|}{\epsilon} + \frac{\|\hbK - \bK\|}{\epsilon} + \|\bR_{0}\|\frac{\|\hbK - \bK\|}{\epsilon} \\
        &&\leq \left(2\|\bR_{0}\| + 1\right)\left[\frac{\|\hbK - \bK\|}{\epsilon} + \frac{\|\hbK - \bK\|^{2}}{\epsilon^{2}}\right] \\
        &&\leq 5\|\bR\| \left[\frac{\|\hbK - \bK\|}{\epsilon} + \frac{\|\hbK - \bK\|^{2}}{\epsilon^{2}}\right]
	\end{eqnarray*}
    since $\|\bR_{0}\| = \|\bR-\bI\| \leq \|\bR\|+1$ and $\|\bR\| \geq 1$.
	This completes the proof.
\end{proof}

\begin{lemma}\label{lem:approx_terms}  
	If $\hbA$ and $\bA$ are positive, then
	\begin{eqnarray*}
		&\|[\epsilon\bI + \hbA]^{-1/2} - [\epsilon\bI + \bA]^{-1/2}\| 
		&\leq \| \hbA - \bA \|/\epsilon^{3/2} \\
		&\Big\|\left[[\epsilon\bI + \hbA]^{-1/2} - [\epsilon\bI + \bA]^{-1/2}\right]\bA^{1/2}\Big\| 
		&\leq \| \hbA - \bA \|/\epsilon.
	\end{eqnarray*}
\end{lemma}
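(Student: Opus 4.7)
The plan is to reduce both bounds to explicit operator-valued resolvent integrals and evaluate the resulting scalar integrals. Set $\hbB := \epsilon\bI + \hbA$ and $\bB := \epsilon\bI + \bA$; since $\hbA$ and $\bA$ are positive, both $\hbB$ and $\bB$ dominate $\epsilon\bI$, hence the resolvents $\|(\hbB + t\bI)^{-1}\|$ and $\|(\bB + t\bI)^{-1}\|$ are at most $(\epsilon + t)^{-1}$ for every $t \geq 0$.

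For the first inequality, I would apply the standard integral representation
$$\bM^{-1/2} = \frac{1}{\pi}\int_0^\infty t^{-1/2}(\bM + t\bI)^{-1}\,dt,$$
valid for any positive, boundedly invertible $\bM$, to both $\hbB$ and $\bB$, subtract, and invoke the resolvent identity $(\hbB+t\bI)^{-1} - (\bB+t\bI)^{-1} = (\bB+t\bI)^{-1}(\bB - \hbB)(\hbB+t\bI)^{-1}$. Taking operator norms reduces the claim to evaluating $\int_0^\infty t^{-1/2}(\epsilon+t)^{-2}\,dt$, which via the substitution $s = t/\epsilon$ equals $\pi/(2\epsilon^{3/2})$; this yields $\|\hbB^{-1/2} - \bB^{-1/2}\| \leq \|\hbA - \bA\|/(2\epsilon^{3/2})$, which is stronger than the stated bound.

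For the second inequality, rather than redoing the same integral (which would require controlling $\|(\hbB + t\bI)^{-1}\bA^{1/2}\|$, for which we have no direct bound since $\bA$ need not be dominated by $\hbB$), I would use the algebraic factorization
$$\hbB^{-1/2} - \bB^{-1/2} = \hbB^{-1/2}\bigl(\bB^{1/2} - \hbB^{1/2}\bigr)\bB^{-1/2},$$
and post-multiply by $\bA^{1/2}$. The three resulting factors are bounded separately: $\|\hbB^{-1/2}\| \leq \epsilon^{-1/2}$ from $\hbB \geq \epsilon\bI$; the contraction $\|\bB^{-1/2}\bA^{1/2}\| \leq 1$ from the operator inequality $\bA \leq \bB$, which gives $\bB^{-1/2}\bA\bB^{-1/2} \leq \bI$; and $\|\bB^{1/2} - \hbB^{1/2}\| \leq \|\hbA - \bA\|/(2\sqrt{\epsilon})$, obtained by the companion representation $\bM^{1/2} = \frac{1}{\pi}\int_0^\infty t^{-1/2}\bM(\bM+t\bI)^{-1}\,dt$ combined with the identity $\bM(\bM+t\bI)^{-1} = \bI - t(\bM+t\bI)^{-1}$ and the same resolvent computation (the relevant scalar integral is $\int_0^\infty t^{1/2}(\epsilon+t)^{-2}\,dt = \pi/(2\sqrt{\epsilon})$). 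Multiplying the three pieces delivers $\|\hbA - \bA\|/(2\epsilon) \leq \|\hbA - \bA\|/\epsilon$.

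The main obstacle is selecting the right factorization for the second bound: the naive route via the integral representation of $\hbB^{-1/2} - \bB^{-1/2}$ stalls because $\bA$ is not a priori comparable with $\hbB$. Routing the estimate through $\bB$ via the factorization and exploiting $\bA \leq \bB$ to obtain the contraction $\|\bB^{-1/2}\bA^{1/2}\| \leq 1$ is the key structural observation; the rest is a routine application of the resolvent calculus.
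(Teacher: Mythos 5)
Your argument is correct, and both inequalities follow with even better constants (a factor of $1/2$) than the lemma claims. The route is genuinely different from the paper's, however. The paper's proof is purely algebraic: it writes $[\epsilon\bI+\hbA]^{-1/2}-[\epsilon\bI+\bA]^{-1/2}$ as a three-factor product, converts the inner difference of square roots into a difference of the operators themselves via a factorization of the form $\hbB^{1/2}-\bB^{1/2}=\bigl[\hbB^{1/2}+\bB^{1/2}\bigr]^{-1}\bigl[\hbB-\bB\bigr]$, and then bounds the resulting resolvent-type factor by $1/\epsilon$ and the trailing factor by $\epsilon^{-1/2}$ (or, for the second inequality, by $\|[\epsilon\bI+\bA]^{-1/2}\bA^{1/2}\|\leq 1$, exactly your contraction observation). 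Your proof instead routes the key estimates through the integral representations $\bM^{\pm 1/2}=\frac{1}{\pi}\int_0^\infty t^{\mp 1/2}\,\bM^{(1\pm1)/2}(\bM+t\bI)^{-1}\,dt$ and the second resolvent identity, reducing everything to the scalar Beta integrals you compute. What this buys you is robustness to non-commutativity: the difference-of-square-roots factorization used in the paper is exact only when $\hbB^{1/2}$ and $\bB^{1/2}$ commute (in general $[\hbB^{1/2}+\bB^{1/2}][\hbB^{1/2}-\bB^{1/2}]=\hbB-\bB+\bB^{1/2}\hbB^{1/2}-\hbB^{1/2}\bB^{1/2}$, with a nonzero commutator remainder), whereas the resolvent identity you invoke is an exact operator identity with no such caveat; the price is the (standard, but heavier) machinery of the holomorphic functional calculus and the need to justify interchanging norms and integrals. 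Your bound $\|\hbB^{1/2}-\bB^{1/2}\|\leq\|\hbA-\bA\|/(2\sqrt{\epsilon})$ is the classical operator-Lipschitz estimate for the square root on operators bounded below by $\epsilon\bI$, and combining it with $\|\hbB^{-1/2}\|\leq\epsilon^{-1/2}$ and $\|\bB^{-1/2}\bA^{1/2}\|\leq 1$ cleanly delivers the second inequality.
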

\begin{proof}
	Notice that
	\begin{eqnarray*}
		&&[\epsilon\bI + \hbA]^{-1/2} - [\epsilon\bI + \bA]^{-1/2} \\
		&&= [\epsilon\bI + \hbA]^{-1/2}\left[ [\epsilon\bI + \hbA]^{1/2} - [\epsilon\bI + \bA]^{-1/2} \right][\epsilon\bI + \bA]^{1/2} \\
		&&= [\epsilon\bI + \hbA]^{-1/2}\left[ [\epsilon\bI + \hbA]^{1/2} + [\epsilon\bI + \bA]^{1/2} \right]^{-1}\left[ [\epsilon\bI + \hbA] - [\epsilon\bI + \bA] \right][\epsilon\bI + \bA]^{-1/2} \\
		&&= \left[ \epsilon\bI + \hbA + [\epsilon\bI + \bA]^{1/2}[\epsilon\bI + \hbA]^{1/2} \right]^{-1} [\hbA - \bA] [\epsilon\bI + \bA]^{-1/2}
	\end{eqnarray*}
	Since $\hbA + [\epsilon\bI + \bA]^{1/2}[\epsilon\bI + \hbA]^{1/2}$ is positive, we can write
	\begin{eqnarray*}
		&& \| [\epsilon\bI + \hbA]^{-1/2} - [\epsilon\bI + \bA]^{-1/2} \| \\
		&&\leq 
            \Big\|\left[
            \epsilon\bI + \hbA + [\epsilon\bI + \bA]^{1/2}[\epsilon\bI + \hbA]^{1/2}
            \right]^{-1}\Big\| 
        \cdot \|\hbA - \bA\| 
        \cdot \|[\epsilon\bI + \bA]^{-1/2}\|\\
		&&\leq \frac{1}{\epsilon} \cdot \|\hbA - \bA\| \cdot \frac{1}{\epsilon^{1/2}}
	\end{eqnarray*}
	and similarly,
	\begin{eqnarray*}
		&&\Big\| \left[[\epsilon\bI + \hbA]^{-1/2} - [\epsilon\bI + \bA]^{-1/2}\right]\bA^{1/2} \Big\| \\
		&&\leq \Big\|\left[ \epsilon\bI + \hbA + [\epsilon\bI + \bA]^{1/2}[\epsilon\bI + \hbA]^{1/2} \right]^{-1}\Big\| 
        \cdot \|\hbA - \bA\| 
        \cdot \|[\epsilon\bI + \bA]^{-1/2}\bA^{1/2}\| \\
		&&\leq \frac{1}{\epsilon} \cdot \|\hbA - \bA\| \cdot 1.
	\end{eqnarray*}
	This completes the proof.
\end{proof}
\begin{remark}\label{rem:nonpos_estmtr}
    The above result holds even if $\hbA$ is not positive so long as $\|\hbA - \bA\|$ is small enough. In this case, we use the following more complicated estimate 
    \begin{eqnarray*}
    &&\Big\|\left[\epsilon\bI + \hbA + [\epsilon\bI + \bA]^{1/2}[\epsilon\bI + \hbA]^{1/2}\right]^{-1}\Big\| \\
    &&= \Big\|\left[2(\epsilon\bI + \bA) + (\hbA - \bA) + [\epsilon\bI + \bA]^{1/2}\Big[[\epsilon\bI + \hbA]^{1/2} - [\epsilon\bI + \bA]^{1/2}\Big]\right]^{-1}\Big\| \\
    &&\leq \Big[ 2\epsilon - \|\hbA - \bA\| - \Big\|(\epsilon\bI + \bA)^{1/2} \Big[(\epsilon\bI + \hbA)^{1/2} - (\epsilon\bI + \bA)^{1/2}\Big]\Big\| \Big]^{-1} \\
    &&\leq \Big[ 2\epsilon - \|\hbA - \bA\| - 2\|\bA\|^{1/2}\|\hbA - \bA\|^{1/2} \Big]^{-1}, 
    \end{eqnarray*}
    which follows from the observation 
    \begin{eqnarray*}
    &&\Big\|(\epsilon\bI + \bA)^{1/2} \Big[(\epsilon\bI + \hbA)^{1/2} - (\epsilon\bI + \bA)^{1/2}\Big]\Big\|  \\
    &&= \Big\|(\epsilon\bI + \bA)^{1/2} \Big[(\epsilon\bI + \hbA)^{1/2} + (\epsilon\bI + \bA)^{1/2}\Big]^{-1} \Big[(\epsilon\bI + \hbA) - (\epsilon\bI + \bA)\Big]\Big\| \\
    &&\leq (\epsilon + \|\bA\|)^{1/2}\|\hbA - \bA\|  \Big\|\Big[(\epsilon\bI + \hbA)^{1/2} + (\epsilon\bI + \bA)^{1/2}\Big]^{-1}\Big\| \\
    &&\leq 2\|\bA\|^{1/2}\|\hbA - \bA\|  \Big\|\Big[(\epsilon\bI + \hbA)^{1/2} + (\epsilon\bI + \bA)^{1/2}\Big]^{-1}\Big\| \\
    &&\leq 2\|\bA\|^{1/2}\|\hbA - \bA\|  \Big[ \sqrt{\epsilon - \|\hbA - \bA\|} + \sqrt{\epsilon} \Big]^{-1} \\
    &&= 2\|\bA\|^{1/2}\Big[  \sqrt{\epsilon} - \sqrt{\epsilon - \|\hbA - \bA\|}  \Big] \\
    &&\leq 2\|\bA\|^{1/2}\|\hbA - \bA\|^{1/2}.
\end{eqnarray*}
It can be shown that for the choice of $\epsilon$ implied by Theorem \ref{thm:error_corr}, the inequalities in Lemma \ref{lem:approx_terms} continue to hold up to multiplicative constants and consequently, we can proceed in the same way as we would assuming $\hbA$ is positive.
\end{remark}

Now, we shall find an upper bound for the approximation error under a regularity condition.

\begin{lemma}\label{lem:approx_error}
	If $\bR_{0} = [\dg \bK]^{\beta}\Phi_{0}[\dg \bK]^{\beta}$ for some bounded operator matrix $\Phi_{0}$ with the diagonal entries all zero and $\beta > 0$, then
	\begin{equation*}
		\| \bR_{e} - \bR \| \leq 
		\begin{cases}
			2\epsilon^{\beta} \cdot \|\Phi_{0}\| \cdot \|\bK\|^{\beta} & 0 < \beta \leq 1 \\
			2\epsilon^{\phantom{\beta}} \cdot \|\Phi_{0}\| \cdot \|\bK\|^{2\beta-1} & 1 < \beta < \infty
		\end{cases}
	\end{equation*}
\end{lemma}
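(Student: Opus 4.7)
The plan is to rewrite $\bR_e - \bR$ via a telescoping identity, and then reduce the operator norm bounds to scalar suprema on the spectrum of $\bA := \dg\bK$ using the continuous functional calculus for the (block-diagonal) positive operator $\bA$.

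First, I would use Assumption~\ref{asm:regularity} to substitute $\bK_0 = \bA^{1/2+\beta}\Phi_0\bA^{1/2+\beta}$ into the definitions of $\bR$ and $\bR_e$. Since $\bA$ commutes with every Borel function of itself, this gives
\[
\bR - \bI = g(\bA)\,\Phi_0\,g(\bA), \qquad \bR_e - \bI = f_\epsilon(\bA)\,\Phi_0\,f_\epsilon(\bA),
\]
where $g(x) := x^{\beta}$ and $f_\epsilon(x) := x^{1/2+\beta}/\sqrt{\epsilon+x}$, both continuous on $[0,\infty)$. Subtracting and telescoping $PQP - P'QP' = (P-P')QP + P'Q(P-P')$ yields
\[
\bR_e - \bR = [f_\epsilon(\bA) - g(\bA)]\,\Phi_0\,f_\epsilon(\bA) \; + \; g(\bA)\,\Phi_0\,[f_\epsilon(\bA) - g(\bA)],
\]
so by submultiplicativity
\[
\|\bR_e - \bR\| \;\leq\; \|f_\epsilon(\bA) - g(\bA)\|\cdot \|\Phi_0\|\cdot \bigl(\|f_\epsilon(\bA)\| + \|g(\bA)\|\bigr).
\]

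The remaining work is purely scalar: since $\sigma(\bA) \subset [0, \|\bA\|]$ and $\|\bA\| = \|\dg\bK\| \leq \|\bK\|$ (a unit vector in $\cH_i$ extended by zero realizes $\|\bK_{ii}\|$), the functional calculus reduces each operator norm to a scalar sup. Monotonicity of $f_\epsilon$ and $g$ on $[0,\|\bA\|]$ gives $\|g(\bA)\| \leq \|\bK\|^{\beta}$ and $\|f_\epsilon(\bA)\| \leq \|\bA\|^{1/2+\beta}/\sqrt{\|\bA\|} = \|\bA\|^{\beta} \leq \|\bK\|^{\beta}$. The crucial step is bounding
\[
|f_\epsilon(x) - g(x)| \;=\; \frac{\epsilon\, x^{\beta}}{\sqrt{\epsilon+x}\bigl(\sqrt{\epsilon+x}+\sqrt{x}\bigr)} \;\leq\; \frac{\epsilon\, x^{\beta}}{\epsilon+x},
\]
obtained by rationalising $\sqrt{\epsilon+x} - \sqrt{x}$.

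The case split then comes from optimising the elementary function $x \mapsto \epsilon\, x^\beta/(\epsilon+x)$. For $0 < \beta \leq 1$, substituting $y = x/\epsilon$ and maximising $y^\beta/(1+y)$ over $y \geq 0$ yields the universal bound $\epsilon\, x^\beta/(\epsilon+x) \leq \epsilon^{\beta}$, whence $\|\bR_e - \bR\| \leq \epsilon^\beta \|\Phi_0\| \cdot 2\|\bK\|^\beta$. For $\beta > 1$, $y^\beta/(1+y)$ is unbounded, so instead I would estimate pointwise $\epsilon\, x^\beta/(\epsilon+x) \leq \epsilon\, x^{\beta-1}$ and use $x \leq \|\bK\|$, yielding $\epsilon\,\|\bK\|^{\beta-1}\cdot \|\Phi_0\|\cdot 2\|\bK\|^\beta = 2\epsilon\|\Phi_0\|\|\bK\|^{2\beta-1}$. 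No step is a serious obstacle; the only mildly delicate piece is the optimisation of $y^\beta/(1+y)$ for $\beta \leq 1$, which is a one-line calculus exercise yielding $\beta^\beta(1-\beta)^{1-\beta} \leq 1$.
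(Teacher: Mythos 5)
Your proof is correct and follows essentially the same route as the paper's: the same telescoping decomposition $PQP - P'QP' = (P-P')QP + P'Q(P-P')$ applied to $\bK_0 = [\dg\bK]^{1/2+\beta}\Phi_0[\dg\bK]^{1/2+\beta}$, followed by the same spectral-mapping reduction to the scalar quantity $\epsilon\lambda^{\beta}/\bigl(\sqrt{\epsilon+\lambda}(\sqrt{\lambda}+\sqrt{\epsilon+\lambda})\bigr)$ and the same elementary optimisation of $\lambda^{\beta}/(\epsilon+\lambda)$. The only (cosmetic) differences are that you phrase the factors as continuous functions $f_\epsilon, g$ of $\dg\bK$ rather than via the pseudoinverse $[\dg\bK]^{-1/2}$, and you handle all of $0<\beta\leq 1$ with a single scalar bound where the paper splits into three subcases.
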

\begin{proof}
	We decompose the difference as follows:
	\begin{eqnarray*}
		& \bR_{e} - \bR
		& = [\epsilon \bI + \dg \bK]^{-1/2}[\dg \bK]^{1/2}\bR_{0}[\dg \bK]^{1/2}[\epsilon \bI + \dg \bK]^{-1/2} - \bR_{0} \\
		&&= \Big[ [\epsilon \bI + \dg \bK]^{-1/2} - [\dg \bK]^{-1/2} \Big][\dg \bK]^{1/2}\bR_{0}[\dg \bK]^{1/2}[\epsilon \bI + \dg \bK]^{-1/2} \\
		&&\qquad+\quad \bR_{0}[\dg \bK]^{1/2}\Big[ [\epsilon \bI + \dg \bK]^{-1/2} - [\dg \bK]^{-1/2} \Big] \\
		&&= \Big[ [\epsilon \bI + \dg \bK]^{-1/2} - [\dg \bK]^{-1/2} \Big][\dg \bK]^{1/2+\beta}\Phi_{0}[\dg \bK]^{1/2+\beta}[\epsilon \bI + \dg \bK]^{-1/2} \\
		&&\qquad+\quad [\dg \bK]^{\beta}\Phi_{0}[\dg \bK]^{1/2+\beta}\Big[ [\epsilon \bI + \dg \bK]^{-1/2} - [\dg \bK]^{-1/2} \Big] \\
	\end{eqnarray*}
	Using $\| [\dg\bK]^{1/2+\beta}[\epsilon\bI+\dg\bK]^{-1/2}\| \leq \|\dg\bK\|^{\beta} \leq \|\bK\|^{\beta} $, it follows that
	\begin{eqnarray*}
		&\|\bR - \bR_{e}\| &\leq \Big\|\Big[[\epsilon\bI+\dg\bK]^{-1/2} - [\dg \bK]^{-1/2} \Big][\dg \bK]^{1/2+\beta} \Big\| \|\Phi_{0}\| \|\dg\bK\|^{\beta} \\
		& &\qquad+\quad \|\dg\bK \|^{\beta}\|\Phi_{0}\|\Big\|[\dg \bK]^{1/2+\beta} \Big[[\epsilon\bI+\dg\bK]^{-1/2} - [\dg\bK]^{-1/2}\Big]\Big\|
	\end{eqnarray*}
	The conclusion is now an obvious consequence of Lemma \ref{lem:diff_sqrts_beta_ineq}.
\end{proof}

\begin{lemma} \label{lem:diff_sqrts_beta_ineq}
	We have 
	\begin{equation*}
		\Big\| \Big[ [\epsilon \bI + \dg \bK]^{-1/2} - [\dg \bK]^{-1/2} \Big][\dg \bK]^{1/2+\beta} \Big\| \leq \begin{cases}
			\epsilon^{\beta} & 0 < \beta \leq 1 \\
			\epsilon \cdot \| \dg \bK \|^{\beta-1} & 1 < \beta < \infty
		\end{cases}
	\end{equation*}
\end{lemma}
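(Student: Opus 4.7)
The plan is to reduce this operator-norm inequality to a scalar bound via the functional calculus. Since $\dg\bK$ is a positive self-adjoint operator matrix, the operator in question equals $f_{\epsilon,\beta}(\dg\bK)$ for the scalar function
\[
f_{\epsilon,\beta}(\lambda) \;=\; \left[\frac{1}{\sqrt{\epsilon+\lambda}} - \frac{1}{\sqrt{\lambda}}\right]\lambda^{1/2+\beta},
\]
with the natural convention $f_{\epsilon,\beta}(0)=0$, consistent with the pseudoinverse reading of $(\dg\bK)^{-1/2}$ since the factor $\lambda^{1/2+\beta}$ annihilates $\ker(\dg\bK)$. Consequently the norm we must bound equals $\sup_{\lambda\in\sigma(\dg\bK)} |f_{\epsilon,\beta}(\lambda)|$, which collapses the whole statement to a one-variable inequality on $[0,\|\dg\bK\|]$.

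First I would rationalise the bracket, writing $\tfrac{1}{\sqrt{\epsilon+\lambda}}-\tfrac{1}{\sqrt{\lambda}} = -\tfrac{\epsilon}{\sqrt{\lambda}\sqrt{\epsilon+\lambda}(\sqrt{\lambda}+\sqrt{\epsilon+\lambda})}$, which simplifies to
\[
f_{\epsilon,\beta}(\lambda) \;=\; -\frac{\epsilon\,\lambda^{\beta}}{\sqrt{\epsilon+\lambda}\,(\sqrt{\lambda}+\sqrt{\epsilon+\lambda})}.
\]
Using the elementary estimate $\sqrt{\epsilon+\lambda}(\sqrt{\lambda}+\sqrt{\epsilon+\lambda}) = \sqrt{\lambda(\epsilon+\lambda)} + (\epsilon+\lambda) \geq \epsilon+\lambda$, this yields the convenient master bound
\[
|f_{\epsilon,\beta}(\lambda)| \;\leq\; \frac{\epsilon\,\lambda^{\beta}}{\epsilon+\lambda}.
\]

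For $0<\beta\leq 1$, I would apply Young's inequality with conjugate weights $1-\beta$ and $\beta$ to obtain $\epsilon^{1-\beta}\lambda^{\beta} \leq (1-\beta)\epsilon+\beta\lambda \leq \epsilon+\lambda$, which rearranges to $\epsilon\lambda^{\beta}/(\epsilon+\lambda) \leq \epsilon^{\beta}$, delivering the first branch. For $\beta>1$, I would instead peel off a factor of $\lambda$ and bound the remaining power using that $\lambda \leq \|\dg\bK\|$: namely, $\lambda^{\beta} = \lambda^{\beta-1}\cdot\lambda \leq \|\dg\bK\|^{\beta-1}\cdot\lambda \leq \|\dg\bK\|^{\beta-1}(\epsilon+\lambda)$, giving $|f_{\epsilon,\beta}(\lambda)| \leq \epsilon\,\|\dg\bK\|^{\beta-1}$. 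I do not anticipate any serious obstacle; the only mildly delicate point is the functional-calculus reduction when $\dg\bK$ has nontrivial kernel, but this is harmless because $(\dg\bK)^{1/2+\beta}$ vanishes on $\ker(\dg\bK)$ and the pseudoinverse $(\dg\bK)^{-1/2}$ is defined on its orthogonal complement, so the composition is unambiguously $f_{\epsilon,\beta}(\dg\bK)$ in the spectral sense.
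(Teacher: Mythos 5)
Your proof is correct and follows the paper's own route in its essential structure: reduce to a scalar inequality via the spectral mapping theorem (your explicit treatment of $\ker(\dg\bK)$ and the pseudoinverse is a welcome extra care the paper leaves implicit), then rationalise the bracket to obtain $\epsilon\lambda^{\beta}/\bigl[\sqrt{\epsilon+\lambda}(\sqrt{\lambda}+\sqrt{\epsilon+\lambda})\bigr]$. Where you diverge is only in how the scalar bound is closed. The paper splits into three regimes ($0<\beta<1/2$, $1/2\le\beta<1$, $\beta\ge 1$), keeps different portions of the denominator in each, and invokes a separate calculus lemma (Lemma \ref{lem:lambda_epsilon_ineq}) that optimises $\lambda^{x}/(\epsilon+\lambda)$ over $\lambda$; you instead use the single master bound $\epsilon\lambda^{\beta}/(\epsilon+\lambda)$ for all $\beta$ and finish with weighted AM--GM (Young), $\epsilon^{1-\beta}\lambda^{\beta}\le(1-\beta)\epsilon+\beta\lambda\le\epsilon+\lambda$, for $0<\beta\le 1$, and the trivial peel-off $\lambda^{\beta}\le\|\dg\bK\|^{\beta-1}\lambda$ for $\beta>1$. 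Your version is slightly more streamlined — it avoids the case split at $\beta=1/2$ and the auxiliary optimisation — at the cost of giving up the harmless extra factor of $1/2$ that the paper's lemma provides; both land within the stated bound. One terminological nit: $1-\beta$ and $\beta$ are the \emph{weights} in the AM--GM form of Young's inequality, not conjugate exponents (those would be $1/(1-\beta)$ and $1/\beta$); the inequality you use is nevertheless the correct one.
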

\begin{proof}
	By the spectral mapping theorem,
	\begin{eqnarray*}
		\Big\| \Big[ [\epsilon \bI + \dg \bK]^{-1/2} - [\dg \bK]^{-1/2} \Big][\dg \bK]^{1/2+\beta} \Big\| \leq \sup_{0 \leq \lambda \leq  \| \dg \bK \|} \left\lbrace \left| \frac{1}{\sqrt{\epsilon + \lambda}} - \frac{1}{\sqrt{\lambda}} \right| \cdot \lambda^{1/2+\beta} \right\rbrace
	\end{eqnarray*}
	It can be shown using some elementary calculations that
	\begin{eqnarray*}
		\left| \frac{1}{\sqrt{\epsilon + \lambda}} - \frac{1}{\sqrt{\lambda}} \right| \cdot \lambda^{1/2+\beta}
		= \frac{\epsilon\lambda^{\beta}}{\sqrt{\epsilon + \lambda}(\sqrt{\lambda} + \sqrt{\epsilon+\lambda})} 
		\leq \begin{cases}
			\epsilon\left[\frac{\lambda^{\beta}}{\epsilon + \lambda}\right] & 0 < \beta < 1/2 \\
			\epsilon\left[\frac{\lambda^{2\beta-1}}{\epsilon + \lambda}\right]^{1/2} & 1/2 \leq \beta < 1 \\
			\epsilon \lambda^{\beta-1} & 1 \leq \beta < \infty
		\end{cases}
	\end{eqnarray*}
	The conclusion follows from Lemma \ref{lem:lambda_epsilon_ineq}.
\end{proof}
\begin{lemma} \label{lem:lambda_epsilon_ineq}
	For $0 < x < 1$ and $\lambda \geq 0$, we have
	\begin{equation*}
		\frac{\lambda^{x}}{\epsilon + \lambda} \leq \frac{\epsilon^{x-1}}{2}
	\end{equation*}
\end{lemma}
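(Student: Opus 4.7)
The plan is to attack the bound by a direct one-dimensional optimization over $\lambda \geq 0$. After dividing numerator and denominator by $\epsilon$, i.e. setting $t = \lambda/\epsilon$, the claim reduces to showing that $\frac{t^{x}}{1+t} \leq \frac{1}{2}$ for every $t \geq 0$ and every $x \in (0,1)$, since $\lambda^{x}/(\epsilon+\lambda) = \epsilon^{x-1} \cdot t^{x}/(1+t)$. Hence the problem is entirely about bounding a scalar function in one variable, and calculus suffices.

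I would then differentiate $g_{x}(t) = t^{x}/(1+t)$ in $t$ and locate the unique critical point $t^{\ast} = x/(1-x)$, at which $g_{x}(t^{\ast}) = x^{x}(1-x)^{1-x}$. Substituting back in the original variables, this gives the sharp estimate $\frac{\lambda^{x}}{\epsilon+\lambda} \leq x^{x}(1-x)^{1-x}\,\epsilon^{x-1}$ for every $\lambda \geq 0$. The final step is to compare $x^{x}(1-x)^{1-x}$ to $1/2$: one computes $\frac{d}{dx}\bigl[x\ln x + (1-x)\ln(1-x)\bigr] = \ln(x/(1-x))$, which changes sign only at $x=1/2$, so $h(x) := x^{x}(1-x)^{1-x}$ achieves its minimum on $(0,1)$ at $x=1/2$, where $h(1/2)=1/2$.

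The obstacle is precisely this last step: the calculation shows $h(x) \geq 1/2$ with equality only at $x=1/2$, and in fact $h(x) \nearrow 1$ as $x \to 0^{+}$ or $x \to 1^{-}$. So the stated inequality with the factor $1/2$ is only tight (and only attainable) at the symmetric point $x=1/2$, where a two-line weighted AM--GM argument $\epsilon + \lambda \geq 2\sqrt{\epsilon\lambda}$ gives the bound immediately. For other $x \in (0,1)$ the bound one actually obtains from the optimization is $\frac{\lambda^{x}}{\epsilon+\lambda} \leq h(x)\,\epsilon^{x-1}$, with $h(x) \in [1/2, 1)$.

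My proposal is therefore to present the optimization proof up to the universal constant $h(x) \leq 1$, yielding $\frac{\lambda^{x}}{\epsilon+\lambda} \leq \epsilon^{x-1}$, and to include the AM--GM shortcut that gives the tighter factor $1/2$ at $x=1/2$. This is enough to drive Lemma \ref{lem:diff_sqrts_beta_ineq} and the downstream rates in Theorem \ref{thm:error_corr}, since those statements absorb multiplicative constants into the $\cO_{\bbP}$ and operator-norm bounds. In the write-up I would flag the factor $1/2$ in the stated lemma as a mild constant issue that does not propagate, rather than grind through an algebraic identity that cannot hold outside $x=1/2$.
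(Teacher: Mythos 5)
Your analysis is correct, and you have in fact caught a genuine error in the paper. The paper's own proof follows exactly the optimization route you describe: it minimizes the reciprocal $\epsilon\lambda^{-x} + \lambda^{1-x}$ at $\lambda_{\ast} = x\epsilon/(1-x)$ and arrives at the value $\epsilon^{1-x}/[x^{x}(1-x)^{1-x}]$, but then asserts that $\max_{0<x<1}[x^{x}(1-x)^{1-x}] = 1/2$. As you observe, $1/2$ is the \emph{minimum} of $h(x) = x^{x}(1-x)^{1-x}$ on $(0,1)$, attained only at $x = 1/2$; the supremum is $1$, approached at the endpoints. Consequently the stated inequality with the factor $1/2$ fails for $x \neq 1/2$ (for instance $x = 0.9$, $\epsilon = 1$, $\lambda = 9$ gives $9^{0.9}/10 \approx 0.72 > 1/2$), and the best universal bound is $\lambda^{x}/(\epsilon+\lambda) \leq h(x)\,\epsilon^{x-1} \leq \epsilon^{x-1}$, exactly as you derive. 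Your claim that losing the factor $1/2$ is harmless downstream is also right: in Lemma \ref{lem:diff_sqrts_beta_ineq} the bound is applied with a prefactor of $\epsilon$ (or inside a square root), and replacing $\epsilon^{x-1}/2$ by $\epsilon^{x-1}$ still yields the stated conclusions $\epsilon^{\beta}$ and $\epsilon\|\dg\bK\|^{\beta-1}$ there, so Theorem \ref{thm:error_corr} and the subsequent rates are unaffected. Your write-up --- proving the correct inequality with constant $1$, noting the sharp constant $h(x)$, and flagging the $1/2$ in the statement as attainable only at $x=1/2$ --- is the right repair, and is essentially the paper's argument carried out correctly.
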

\begin{proof}
	Consider the reciprocal expression. It follows from elementary differential calculus that the minimum of the reciprocal occurs at $\lambda_{\ast} = x\epsilon/(1-x)$. Therefore,
	\begin{equation*}
		\frac{\epsilon}{\lambda^{x}} + \lambda^{1-x} \geq \frac{\epsilon}{\lambda_{\ast}^{x}} + \lambda_{\ast}^{1-x} = \frac{\epsilon^{1-x}}{x^{x}(1-x)^{1-x}} \geq \frac{\epsilon^{1-x}}{\max_{0 < x < 1} [x^{x}(1-x)^{1-x}]} = 2\epsilon^{1-x}
	\end{equation*}
\end{proof}

\subsubsection{Concentration Inequalities}

\begin{proof}[Proof of Theorem \ref{thm:large_dev}]
    Apply Theorem 9 from \textcite{koltchinskii2017} and replace $t$ with $nt^{2}/\|\bK\|^{2}$, simplify and restate the conditions accordingly.
\end{proof}

We now prove a concentration inequality for the correlation operator.
\begin{proof}[Proof of Theorem \ref{thm:conc_ineq}]
\begin{enumerate}
    \item This is a straightforward consequence of Theorem \ref{thm:large_dev} and \ref{thm:error_corr}.
    \begin{equation*}
        \bbP[\|\hbR-\bR\| > \rho] \leq \bbP[\|\hbK-\bK\| > (\rho/M_{R})^{1+1/\beta \wedge 1}] \leq \exp \left[-c_{R}n\rho^{2+2/\beta \wedge 1}\right].
    \end{equation*}
    \item Under Assumption \ref{asm:eigenvalues}, $r = 1 + \inf_{k} \lambda_{k}(\bR_{0}) > 0$. Thus, $\bR \geq r\bI$. By the spectral mapping theorem, $\|\bP\| \leq 1/r$. For $\|f\| = 1$, we have
    \begin{equation*}
        \langle f, [\hbR - (r - \rho)\bI]f\rangle = \rho + \langle f,[\hbR - \bR]f \rangle + \langle f, [\bR - r\bI]f \rangle 
    \end{equation*}
    and so,
    \begin{eqnarray*}
        & \inf_{f} \langle f, [\hbR - (r - \rho)\bI]f\rangle 
        & \geq \rho + \inf_{f} \langle f,[\hbR - \bR]f \rangle + \inf_{f} \langle f, [\bR - r\bI]f \rangle \\
        && \geq \rho - \|\hbR - \bR\|.
    \end{eqnarray*}
    The result follows by the spectral mapping theorem from the following observation
    \begin{equation*}
        \bbP[\|\hbP\| > (r - \rho)^{-1}] \leq \bbP[\|\hbR-\bR\| > \rho].
    \end{equation*}
    \item Using a union bound, we have 
    \begin{eqnarray*}
        &\bbP[\|\hbP - \bP\| > \rho] 
        &\leq \bbP[ \|\hbP\| > (r - \rho)^{-1}] + \bbP[\|\hbR - \bR\| > \rho(r - \rho)/\|\bP\|] \\
        &&\leq \exp \left\lbrace-c_{R}n\rho^{2+2/(\beta \wedge 1)}\right\rbrace + \exp \left\lbrace-c_{R}n\left[\rho(r - \rho)/\|\bP\|\right]^{2+2/(\beta \wedge 1)}\right\rbrace
    \end{eqnarray*}
    Now we need only notice that since $0 < r \leq 1$ and $\|\bP\| = 1/r$, we must have $\rho > \rho(r - \rho)/\|\bP\|$. If we require that $\rho \leq r/2$, then $\rho(r - \rho)/\|\bP\| \geq \rho r^{2}/2$ and the conclusion follows.
\end{enumerate}    
\end{proof}

\subsection{Model Selection Consistency}

\begin{proof}[Proof of Theorem \ref{thm:consistency}]
    Notice that $\hat{\Omega}^{\pi} \neq \Omega^{\pi}$ if and only if for some $1 \leq i,j \leq p$ we have
    \begin{enumerate}
        \item $\|\bP_{ij}\|\neq 0$ and $\|\hbP_{ij}\| < \rho$, or
        \item $\|\bP_{ij}\|  =  0$ and $\|\hbP_{ij}\| \geq \rho$.
    \end{enumerate}
    If we require that $\rho < \frac{1}{2}\min_{i,j} \|\bP_{ij}\|$, then this implies that for some $(i,j)$ we must have
    \begin{equation*}
        \|\hbP_{ij} - \bP_{ij}\| > \rho.
    \end{equation*}
    Therefore,
    \begin{eqnarray*}
        & \bbP[\hat{\Omega}^{\pi} \neq \Omega^{\pi}] 
        & = \bbP \cup_{i,j} [\|\hbP_{ij} - \bP_{ij}\| > \rho] \\
        &&\leq \sum_{i,j=1}^{p} \bbP [\|\hbP_{ij} - \bP_{ij}\| > \rho] \\
        &&\leq ~p^{2}\cdot \bbP [\|\hbP - \bP\| > \rho]. 
    \end{eqnarray*}
    Now we apply Theorem \ref{thm:conc_ineq} (3).
\end{proof}

\begin{proof}[Proof of Theorem \ref{thm:consistency2}]
    The proof is a straightforward application of the Borel-Cantelli lemma. Since,
    \begin{equation*}
        \sum_{j = 1}^{\infty}\bbP[\hat{\Omega}_{j} \neq \tilde{\Omega}_{X}^{\pi_{j}}] \leq \sum_{j = 1}^{\infty} \alpha_{j} < \infty
    \end{equation*}
    it follows that $\bbP[\hat{\Omega}_{j} \neq \tilde{\Omega}_{X}^{\pi_{j}} ~i.o.] = 0$. With probability 1, there exists some $j_{0} \geq 1$ such that for all $j \geq j_{0}$ we have $\hat{\Omega}_{j} = \tilde{\Omega}_{X}^{\pi_{j}}$. The conclusion follows from observing that $\cap_{j \geq j_{0}} \tilde{\Omega}_{X}^{\pi_{j}} = \Omega_{X}$.
\end{proof}

\subsection{Discrete Observations with Noise}
\begin{proof}[Proof of Theorem \ref{thm:reg-obs-estmtr}]
Notice that 
\begin{equation*}
    \bbE [ \hat{K}_{\mathrm{regular}}(s, t) ] = \frac{\sum_{i,j=0,1} (1-\delta_{p+i,q+j})K(T_{p+i}, T_{q+j})}{\sum_{i,j=0,1} (1 - \delta_{p+i,q+j})} \qquad \mbox{ for } (s, t) \in I_{p} \times I_{q}.
\end{equation*}
Furthermore, 
    \begin{align*}
        \|\hat{\bK}_{\mathrm{regular}} - \bK\|
        &\leq |\hat{K}_{\mathrm{regular}}(s, t) - K(s, t)| \\
        &\leq |\hat{K}_{\mathrm{regular}}(s, t) - \bbE[\hat{K}_{\mathrm{regular}}(s, t)]| + |\bbE[\hat{K}_{\mathrm{regular}}(s, t)] - K(s, t)|\\
        &\leq \frac{1}{\sum_{i,j=0,1} (1 - \delta_{p+i,q+j})} \sum_{i,j=0,1} (1-\delta_{p+i,q+j})|\hat{F}_{p+i,q+j} - K(T_{p+i}, T_{q+j})| \\
        &+ \frac{1}{\sum_{i,j=0,1} (1 - \delta_{p+i,q+j})}\sum_{i,j=0,1} (1-\delta_{p+i,q+j})|K(T_{p+i}, T_{q+j}) - K(s, t)| \\
        &\leq \max_{\substack{i,j = 0, 1 \\
        p+i \neq q+j}} |\hat{F}_{p+i,q+j} - K(T_{p+i}, T_{q+j})| + \max_{i,j = 0, 1} |K(T_{p+i}, T_{q+j}) - K(s, t)|
    \end{align*}
Therefore, $\|\hat{\bK}_{\mathrm{regular}} - \bK\| \leq \max_{i \neq j} |\hat{F}_{i,j} - K(T_{i}, T_{j})| + \frac{2\sD}{M}$ and using Bernstein's inequality, we can write for $0 \leq t \leq \kappa^{2}$,
\begin{align*}
    \bbP\left[ \|\hat{\bK}_{\mathrm{regular}} - \bK\| \geq t + \frac{2\sD}{M}\right] 
    &\leq \bbP\left[ \max_{i \neq j} |\hat{F}_{i,j} - K(T_{i}, T_{j})| \geq t \right] \\
    &\leq \sum_{i \neq j} \bbP\left[ |\hat{F}_{i,j} - K(T_{i}, T_{j})| \geq t \right] \\
    &\leq [(M+1)^{2}-(M+1)] \cdot 2\exp\left[-\frac{cNt^{2}}{\kappa^{4}}\right] \\
    &\leq 4M^{2} \exp\left[-\frac{cNt^{2}}{\kappa^{4}}\right]
\end{align*}
This completes the proof.
\end{proof}

\begin{proof}[Proof of Theorem \ref{thm:sparse-obs-estmtr}]
Define $K_{\mathrm{sparse}}: I \times I \to \bbR$ as $K_{\mathrm{sparse}}(s, t) = M^{2}K_{pq}$ for $(s, t) \in I_{p} \times I_{q}$. Notice that 
\begin{align*}
    \|\hat{\bK}_{\mathrm{sparse}} - \bK\| &\leq \sup_{u,v \in I} |\hat{K}(u, v) - K(u, v)| \\
    &\leq \sup_{u,v \in I} |\hat{K}(u, v) - K_{\mathrm{sparse}}(u, v)| + \sup_{u,v \in I} |K_{\mathrm{sparse}}(u, v) - K(u, v)| \\
    &\leq M^{2} \cdot \max_{p,q} |\hat{K}_{pq} - K_{pq}| + \max_{p,q} \Big[ \sup \{|M^{2}K_{pq} - K(u, v)|: u \in I_{p}, v \in I_{q}\} \Big] \\
    &\leq M^{2} \cdot \max_{p,q} |\hat{K}_{pq} - K_{pq}| + \frac{1}{M} \sD
\end{align*}
since for $(u, v) \in I_{p} \times I_{q}$,
\begin{align*}
    \sup_{u, v} |K_{pq} - K(u, v)| 
    &= M^{2}\cdot\sup_{u, v} \iint_{I_{p} \times I_{q}} |K(s, t)- K(u, v)| ~ds~dt\\
    &\leq M^{2}\cdot\sup_{u \neq v} \left|\frac{\partial}{\partial u}K(u, v)\right| \cdot \sup_{u, v} \int_{0}^{1/M}\int_{0}^{1/M} [s + t] ~ds~dt\\
    &= \frac{1}{M}\sD.
\end{align*}
Using union bound and Bernstein's inequality (Corollary 2.8.3 of \textcite{vershynin2018}) together with the observation that $\bbE[\hat{K}_{pq}] = K_{pq}$, we have
\begin{align*}
    \bbP\left[\|\hat{\bK}_{\mathrm{sparse}} - \bK\| \geq t + \tfrac{1}{M}\sD\right] 
    &\leq \bbP\left[ \max_{p,q} |\hat{K}_{pq} - K_{pq}| \geq  t/M^{2} \right] \\
    &\leq \sum_{p,q = 1}^{M} \bbP\left[ |\hat{K}_{pq} - K_{pq}| \geq  t/M^{2} \right]\\
    &\leq M^{2} \cdot 2  \exp \left[- \frac{cNt^{2}}{M^{4}\kappa^{4}}\right]
\end{align*}
where the last statement follows from Bernstein's inequality using the estimates:
\begin{align*}
    \left\| \frac{1}{n_{k}(n_{k}-1)}\sum_{i, j=1}^{n_{k}} (1-\delta_{ij}) Y_{ki}Y_{kj} \mathbf{1}_{\{T_{ki} \in I_{p}\}}\mathbf{1}_{\{T_{kj} \in I_{q}\}}  \right\|_{\psi_{1}} 
    &\leq \left\| Y_{ki}Y_{kj} \mathbf{1}_{\{T_{ki} \in I_{p}\}}\mathbf{1}_{\{T_{kj} \in I_{q}\}}  \right\|_{\psi_{1}} \leq \left\| Y_{ki} \right\|_{\psi_{2}}^{2}   
\end{align*}
and $\left\| Y_{ki} \right\|_{\psi_{2}} \leq  \left\| X(T) \right\|_{\psi_{2}} + \left\| \xi_{ki} \right\|_{\psi_{2}} \leq  \kappa$. This completes the proof.
\end{proof}

The proofs of Corollary \ref{corollary-reg-1}, Corollary \ref{corollary-reg-2} and Corollary \ref{corollary-sparse-1} follow from Theorem \ref{thm:reg-obs-estmtr} and Theorem \ref{thm:sparse-obs-estmtr} in a straightforward way as in the complete observations case and are therefore omitted.

\printbibliography
\end{document}